\numberwithin{equation}{section}
\DeclareMathOperator{\PSL}{PSL} \DeclareMathOperator{\SL}{SL}
 \DeclareMathOperator{\GL}{GL}
\DeclareMathOperator{\tr}{tr} 
\DeclareMathOperator{\id}{id} \DeclareMathOperator{\SU}{SU}
\DeclareMathOperator{\Cr}{Cr}
\DeclareMathOperator{\M}{M}
\DeclareMathOperator{\ch}{char}
\begin{document}

\theoremstyle{plain}
\newtheorem{theorem}{Theorem}[section]
\newtheorem{conj}[theorem]{Conjecture}
\newtheorem{corollary}[theorem]{Corollary}
\newtheorem{prop}[theorem]{Proposition}
\newtheorem{Lemma}[theorem]{Lemma}
\newtheorem{lemma}[theorem]{Lemma}

\theoremstyle{definition}
\newtheorem{definition}[theorem]{Definition}
\newtheorem{quest}[theorem]{Question}
\newtheorem{remark}[theorem]{Remark}
\newtheorem{example}[theorem]{Example}
\newtheorem*{answer}{Answer}
\newtheorem{notation}[theorem]{Notation}
\newtheorem*{prin}{Panda Principle}
\newtheorem{question}[theorem]{Question}
\newtheorem{obs}[theorem]{Observation}

\newcommand{\thmref}[1]{Theorem~\ref{#1}}
\newcommand{\secref}[1]{Section~\ref{#1}}
\newcommand{\subsecref}[1]{Subsection~\ref{#1}}
\newcommand{\lemref}[1]{Lemma~\ref{#1}}
\newcommand{\corolref}[1]{Corollary~\ref{#1}}
\newcommand{\exampref}[1]{Example~\ref{#1}}
\newcommand{\remarkref}[1]{Remark~\ref{#1}}
\newcommand{\defnref}[1]{Definition~\ref{#1}}
\newcommand{\propref}[1]{Proposition~\ref{#1}}

\newcommand{\BZ}{\mathbb{Z}}
\newcommand{\BC}{\mathbb{C}}
\newcommand{\BN}{\mathbb{N}}
\newcommand{\BP}{\mathbb{P}}
\newcommand{\BF}{\mathbb{F}}
\newcommand{\BA}{\mathbb{A}}
\newcommand{\BQ}{\mathbb{Q}}
\newcommand{\BM}{\mathbb{M}}
\newcommand{\BX}{\mathbb{X}}
\newcommand{\BR}{\mathbb{R}}

\newcommand{\ep}{\varepsilon}
\newcommand{\al}{\alpha}
\newcommand{\be}{\beta}
\newcommand{\ga}{\gamma}
\newcommand{\de}{\delta}
\newcommand{\la}{\lambda}
\newcommand{\om}{\omega}
\newcommand{\vp}{\varphi}
\newcommand{\st}{\sigma}

\newcommand{\vareps}{\varepsilon}

\newcommand{\G}{\Gamma}

\newcommand{\ov}{\overline}

\newcommand{\fc}{\frac}

\newcommand\cR{\mathcal R}
\newcommand\cP{\mathcal P}
\newcommand\cF{\mathcal F}
\newcommand\cA{\mathcal A}
\newcommand\cG{\mathcal G}
\newcommand{\Fg}{\mathfrak g}
\newcommand{\Fsl}{\mathfrak sl}


\newcommand{\lra}{\longrightarrow}
\newcommand{\idd}{\mathop{\rm{id}}\nolimits}
\newcommand{\R}{{\mathbb R}}
\newcommand{\C}{{\mathbb C}}
\newcommand{\Z}{{\mathbb Z}}
\newcommand{\N}{{\mathbb N}}
\newcommand{\Q}{{\mathbb Q}}

\newcommand{\e}{{\mathfrak{e}}}

\newcommand{\Om}{{\overline{\Omega}_{a_1a_2\cdots a_s}}}
\newcommand{\Oms}{{\Omega}_{a_1a_2\cdots a_s}}
\newcommand{\Fe}{{F}_{a_1a_2\cdots a_s}}
\newcommand{\Imm}{\operatorname{Im}}
\newcommand{\Omsr}{{\mathfrak{T}_r\Omega}_{a_1a_2\cdots a_s}}
\newcommand{\Omr}{{\mathfrak{T}_r\overline{\Omega}_{a_1a_2\cdots
a_s}}}



\newcommand{\din}{{\rm d}_{\rm in}}
\newcommand{\E}{\mathcal{E}}
\newcommand{\Ma}{\mathrm{M}}
\newcommand{\Sp}{\mathrm{Sp}}
\newcommand{\Spin}{\mathrm{Spin}}
\newcommand{\Res}{\mathrm{Res}}
\newcommand{\PSp}{\mathrm{PSp}}
\renewcommand{\G}{\mathrm{G}}
\renewcommand{\P}{\mathbb{P}}

\newcommand{\Un}{\mathrm{U}}

\newcommand{\len}{\mathrm{len}}

\newcommand{\rk}{\mathrm{rk}}

\newcommand{\wid}{\mathrm{wd}}
\newcommand{\sA}{\textsc{A}}
\newcommand{\sB}{\textsc{B}}
\newcommand{\sC}{\textsc{C}}
\newcommand{\sD}{\textsc{D}}
\newcommand{\sE}{\textsc{E}}
\newcommand{\sF}{\textsc{F}}
\newcommand{\sG}{\textsc{G}}

\newcommand{\sslash}{\mathbin{/\mkern-6mu/}}

\def\<{\langle}
\def\>{\rangle}
\def\tilde{\widetilde}
\def\o{\tilde{\omega}}
\def\phi{\varphi}
\def \g{\mathcal G}
\def \I{\mathcal I}
\def \W{\mathcal W}
\def \T{\mathcal T}
\def \H{\mathcal H}
\def \X{\mathcal X}
\def \G{\mathcal G}
\def \U{\mathcal U}
\def \S{\mathcal S}
\def \K{\mathcal K}
\def \P{\mathcal P}
\def \E{\mathcal E}
\def \V{\mathcal V}
\def \B{\mathcal B}
\def \EE{\tilde{\mathcal{E}}_\Gamma }
\def\e{\varepsilon}
\def\m{\mathfrak{m}}
\def\n{\mathfrak{n}}
\def\a{\mathfrak{a}}
\def\h{\mathfrak{h}}
\def\d{\mathfrak{d}}
\def\b{\mathfrak{b}}
\def\Or{\mathcal O}
\def\ot{\mathfrak{ o}}
\def\id{\operatorname{id}}
\def\tr{\operatorname{tr}}
\def\Tr{\operatorname{Tr}}
\def\Aut{\operatorname{Aut}}

\def\Hom{\operatorname{Hom}}

\def\w{\tilde{w}}
\def\ga{\tilde{\Gamma}_w^i}

\def \g{\mathcal G}

\title[Geometry of word equations over special fields]
{Geometry of word equations in simple algebraic groups over special fields}

\author[Gordeev, Kunyavski\u\i , Plotkin] {Nikolai Gordeev, Boris Kunyavski\u\i , Eugene Plotkin}

\thanks{The research of the first  author
was financially supported by the Ministry of Education and Science of the Russian Federation, project 1.661.2016/1.4.
The research of the
second and third authors was supported by ISF grant
1623/16 and the Emmy Noether Research Institute for Mathematics.
The paper was written when the second author visited the MPIM (Bonn).
The authors thank all these institutions.}


\address{Gordeev: Department of Mathematics, Herzen State
Pedagogical University, 48 Moika Embankment, 191186, St.Petersburg,
RUSSIA} \email{nickgordeev@mail.ru}

\address{Kunyavski\u\i : Department of
Mathematics, Bar-Ilan University, 5290002 Ramat Gan, ISRAEL}
\email{kunyav@macs.biu.ac.il}

\address{Plotkin: Department of Mathematics, Bar-Ilan University, 5290002 Ramat Gan,
ISRAEL} \email{plotkin@macs.biu.ac.il}

\begin{abstract}
This paper contains a survey of recent developments in investigation of
word equations in simple matrix groups and polynomial equations in
simple (associative and Lie) matrix algebras along with some new results on the
image of word maps on algebraic groups defined over special fields:
complex, real, $p$-adic (or close to such), or finite.
\end{abstract}

\maketitle

\begin{epigraph}
{Two youngsters came to a sage with a question: ``One of us thinks that even you feel bad, there is always light at the end of the tunnel, and the other thinks that even things go well, you will be overthrown to hell at some point. Who is right?''

``Both and none'', answered the sage. ``Everything depends on the angle the tunnel constructor has chosen.''}
{{\it Ko Bo Zhen}\footnotemark[1]}
\end{epigraph}

\begin{epigraph}
{The probability of the event that a
randomly picked animal is a panda will be much higher if the samples
that one is allowed to test are restrictively placed within Sichuan
province.}
{{\it B. Gorkin-Perelman}\footnotemark[2]}
\end{epigraph}

\footnotetext[1]{{\it Philosophy for Beginners},``Iron Pagoda'' Publishing House, Kaifeng, 1123.}
\footnotetext[2]{{\it Zoology for Beginners}, ``Yellow River'' Publishing House, Kaifeng, 1923.}


\section {Introduction}\label{sec:intro}
The goal of the present paper is two-fold. First, we give a brief overview of recent
developments in investigation of word equations in simple matrix
groups and polynomial equations in simple (associative and Lie)
matrix algebras. In this respect, it can be viewed as a follow-up to \cite{KBKP},
where an attempt was made to pursue various parallels between group-theoretic
and algebra-theoretic set-ups.

The emphasis is put on the properties of the
image of the word map under consideration. Namely,
ideally we want to prove that this image is as large
as possible, i.e., that the map is {\it surjective} or
at least {\it dominant} (in Zariski or ``natural'' topology).
In the latter case, whenever the surjectivity is unknown,
we are interested in the ``fine structure'' of the image.

Usually, in the most general set-up (arbitrary word maps on arbitrary groups)
little can be said, so one restricts attention to some wide classes of words and groups.
In particular, we are interested in {\it linear algebraic} groups where Borel's dominance
theorem \cite{Bo1} is available for connected semisimple groups. To go further, one can consider some
special classes of words and/or ground fields. The first approach may lead to spectacular
results, see, e.g., our recent paper \cite{GKP3} for a survey.

Here we focus on looking at some special ground fields, such as
complex, real, $p$-adic, finite, or close to such.
(Some recent results valid for arbitrary algebraically closed ground fields were also surveyed
in \cite{GKP3}.)

It is also worth noting
that the case of finite ground fields, which naturally includes
equations in finite groups of Lie type, has been widely discussed in
the literature over the past few years (see, e.g.,
\cite{Sh1}--\cite{Sh3}, \cite{Mall}, \cite{BGK}), mainly in virtue
of spectacular success of algebraic-geometric machinery and solution
of a number of long-standing problems, such as Ore's problem
\cite{LOST1}. Much less is known for matrix equations over number
fields and their rings of integers, as well as over the fields of
$p$-adic, real, and complex numbers (see, however,
\cite{Sh2}--\cite{Sh3}, \cite{AGKS}, \cite{Ku2}). Thus the present paper
contains much more questions than answers, which clearly indicates that
the topic is still in its infancy (if not embryonic) stage.

Our second goal consists in discussing some crucial results in more detail,
providing slightly modified proofs and, more important, giving some generalizations.
We pay special attention to the study of the fine structure of the image, as mentioned above,
with a goal to guarantee that the image contains some ``general'' or ``special'' elements
(regular semisimple, unipotent, etc.). These parts of the paper can be omitted by the reader interested
only in general picture.

Our main message to the reader is encoded in two epigraphs.
After translation into mathematical language, the first says
that when we are looking at the image of a word map $w\colon G^d\to G$
and varying $w$ and $G$, this image can be made as large as possible
(within the constraints determined by the nature of the problem) when
we fix $w$ and enlarge $G$, and, {\it vice versa}, it can be made as
small as possible (also within certain constraints) when we fix $G$
and enlarge $w$ (in the body of the paper we call such a situation ``negative-positive''). The second epigraph can be roughly interpreted as follows:
when we carefully define the class of groups $G$ we consider, any random
word $w$ (if not all of them) has a large image (where ``random'' and ``large''
are also to be carefully defined).

\bigskip

Our notation is standard. We refer the reader to \cite{Seg} for basic notions related to
word maps.

\bigskip

We start with several naive (well-known) examples which will hopefully
give a flavour of problems under consideration. First, consider
extracting square roots in matrix groups.

\begin{example} \label{ex:sl2r}
Is the equation $x^2=g$ always solvable in $G=\SL(2,\mathbb R)$? Of
course, the answer is ``no''. For example, the matrix
$$
g=\left(\begin{matrix}-4 & 0 \\ 0 & -1/4\end{matrix}\right)
$$
has no square roots in $G$: by Jordan's theorem, such a root would
have two complex eigenvalues one of which would be $\pm 2i$ and the
other $\pm i/2$, which is impossible because they must be conjugate.
\end{example}

There are at least two natural ways out. First, one can try to
extend the ground field, going over to $\SL(2,\mathbb
C)$.
Here one has another counter-example: the matrix
$$
g=\left(\begin{matrix}-1 & 1 \\ 0 & -1\end{matrix}\right)
$$
has no square roots in $\SL(2,\mathbb C)$ because by the same Jordan
theorem, the eigenvalues of such a root would be either both equal
to $i$ or to $-i$, thus giving the determinant $-1$ (and not 1, as
required). This can easily be repaired by factoring out the centre
and considering the {\it adjoint} group $\PSL(2,\mathbb C)$: in this latter group one can extract roots of
any degree (and this can also be done in $\PSL(m,\mathbb C)$ for any
$m\ge 2$).

Surprisingly, this way out is somewhat misleading: it does not work
for simple groups other than those of type $\sA_n$. Here is the
corresponding result:

\begin{theorem} (Steinberg \cite{St4}, Chatterjee \cite{Ch1}--\cite{Ch2}) \label{St}
The map $x\mapsto x^n$ is surjective on $\mathcal G(K)$ ($K$ is an
algebraically closed field of characteristic exponent $p$, $\mathcal
G$ is a connected semisimple algebraic $K$-group) if and only if $n$
is prime to $prz$, where $z$ is the order of the centre of $\mathcal
G$ and $r$ is the product of ``bad'' primes.
\end{theorem}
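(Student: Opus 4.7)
The plan is to prove both directions separately, with the Jordan decomposition of an arbitrary element of $\mathcal{G}(K)$ as the structural backbone. Given $g\in\mathcal{G}(K)$, write $g=g_s g_u$ uniquely with $g_s$ semisimple, $g_u$ unipotent and $[g_s,g_u]=1$; any candidate $n$-th root $v$ has its own Jordan decomposition $v=v_s v_u$ satisfying $v^n=v_s^n v_u^n$, so by the uniqueness of the decomposition the problem decouples into finding commuting $v_s$, $v_u$ with $v_s^n=g_s$ and $v_u^n=g_u$.

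For the sufficiency, assuming $\gcd(n,prz)=1$, I would first produce a semisimple $n$-th root $v_s$ of $g_s$. Since $K$ is algebraically closed and any semisimple element lies in a maximal torus $T\cong(K^\times)^{\mathrm{rk}\,\mathcal{G}}$, the $n$-th power map is surjective on $T$. The delicate point is to choose $v_s$ inside a maximal torus of the connected centralizer $C_{\mathcal{G}}(g)^\circ$, using the condition $\gcd(n,z)=1$ to pass between isogenous forms (simply connected vs.\ adjoint) and to match components of centralizers. Next I would work inside the connected reductive group $H=C_{\mathcal{G}}(v_s)^\circ$, which contains $g_u$, and exhibit $v_u\in H$ with $v_u^n=g_u$. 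Here the hypotheses $\gcd(n,p)=1$ and $\gcd(n,r)=1$ are used to invoke a Springer-type isomorphism between the unipotent variety of $H$ and the nilpotent cone of $\mathrm{Lie}(H)$ under which the $n$-th power corresponds to scalar multiplication by $n$; this is bijective exactly when $n$ is invertible modulo the relevant torsion, controlled by $p$ and by the bad primes of $H$. The product $v_sv_u$ then has $n$-th power $g$.

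For the necessity I would exhibit explicit obstructions, one per bad prime dividing $n$. If $p\mid n$, a regular unipotent element $u$ cannot be an $n$-th power: its centralizer is $Z(\mathcal{G})$ times a unipotent group, so any $n$-th root would have unipotent part $v_u$ with $v_u^n=u$, but taking $n$-th powers strictly decreases the $p$-power order of nontrivial unipotents when $p\mid n$, while $u$ has the \emph{maximal} unipotent order in $\mathcal{G}(K)$. If a bad prime $\ell\mid r$ divides $n$, then in the corresponding classical or exceptional type there is a distinguished unipotent class whose $\ell$-th power behaves anomalously; this is Chatterjee's refinement of Steinberg's original statement, and it requires case-by-case verification across types $\sB$, $\sC$, $\sD$, $\sE$, $\sF$, $\sG$ using the Bala--Carter classification of unipotent classes. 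Finally, if a prime $\ell\mid z$ divides $n$, one constructs $g=g_sg_u$ with $g_s$ an element whose $n$-th roots are all central of a fixed order $\ell$, so that for every candidate semisimple root $v_s$ the connected centralizer $C_{\mathcal{G}}(v_s)^\circ$ is a torus too small to contain any nontrivial unipotent $n$-th root of $g_u$; the $\mathrm{SL}_2(\mathbb{C})$ example surrounding the theorem in the excerpt is precisely the baby case of this obstruction.

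The main obstacle is the bad-prime direction for $\ell\mid r$: it is genuinely type-sensitive and not amenable to a uniform argument, as one must inspect unipotent classes in exceptional groups and their $\ell$-th powers one type at a time. A secondary technical difficulty, in the sufficiency direction, is aligning the semisimple root $v_s$ with a unipotent root of $g_u$ inside a common centralizer, which is exactly where the hypothesis $\gcd(n,z)=1$ ceases to be decorative and becomes essential.
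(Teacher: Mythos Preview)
The paper does not prove this theorem; it is stated with attribution to Steinberg \cite{St4} and Chatterjee \cite{Ch1}, \cite{Ch2} and then used as input for the surrounding discussion, so there is no in-paper proof to compare against. Your outline is broadly consonant with the strategy in those sources: reduce via Jordan decomposition to separate semisimple and unipotent problems, use tori and a Springer-type correspondence for sufficiency, and produce explicit obstructions for each prime factor of $prz$ for necessity.

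That said, your description of the $z$-obstruction is inverted. You write that one chooses $g_s$ ``whose $n$-th roots are all central of a fixed order $\ell$'', and that therefore $C_{\mathcal{G}}(v_s)^\circ$ is a small torus. But a central $v_s$ has centraliser equal to all of $\mathcal{G}$, not a torus; the argument as written is self-contradictory. The actual mechanism, already visible in the $\SL_2(\mathbb{C})$ example preceding the theorem, is the opposite: one takes $g_s$ \emph{central} (there $g_s=-I$), so that every semisimple square root $v_s$ is forced to be \emph{regular} (eigenvalues $i,-i$), whence $C_{\mathcal{G}}(v_s)^\circ$ is merely a maximal torus and cannot contain the nontrivial unipotent $g_u$. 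So the roles of ``central'' and ``regular'' in your sketch need to be swapped. The remaining parts of your plan (the $p\mid n$ obstruction via maximal unipotent order, and the acknowledgment that the bad-prime case is type-by-type) are accurate in spirit, though of course the latter is where essentially all the work in Chatterjee's papers lies.
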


In particular, one can guarantee that $n$-th roots can be extracted
in an arbitrary connected semisimple group of adjoint type over
$\mathbb C$ if and only if $n$ is prime to 30.

\begin{obs} \label{compact}
Here is another way out of the situation of Example \ref{ex:sl2r}:
replace $\SL (2,\mathbb R)$ with its compact form $\SU (2)$. Then
extracting square roots is no longer a problem. More generally, one
can use Lie theory to extract roots of any degree in any connected
compact real Lie group $G$ because for such a $G$ the exponential
map $\exp\colon\Fg\to G$ is surjective (see, e.g.,
\cite[Corollary~2.1.2]{Do}): indeed, given $g\in G$, write it as
$g=\exp(a)$ and for any integer $n\ge 1$ get $\exp(a/n)^n=g$.

This observation can be put in an even more general form: it turns
out that the surjectivity of the exponential map is equivalent to
the surjectivity of all power maps $G\to G$, $g\mapsto g^n$,
provided $G$ is any connected real \cite{McC}, \cite{HL} or complex
\cite[Section~6]{Ch1} linear algebraic group; more details on the
real case can be found in \cite{DjTh}, \cite{Wu2}, \cite{Ch4}; see
\cite{Ch3} for discussion of similar problems for $p$-adic groups.
The reader interested in the history of the surjectivity problem for
the exponential map, dating back to the 19th century (Engel and
Study), is referred to \cite{Wu1}; see \cite{DH} for a survey of
modern work and \cite{HR} for generalizations to the case of Lie
semigroups.
\end{obs}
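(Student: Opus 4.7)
The plan is to prove, for a connected compact real Lie group $G$, that the exponential map $\exp\colon\mathfrak{g}\to G$ is surjective; the stated extraction of $n$-th roots then follows at once, since if $g=\exp(a)$ then $a/n$ and $a$ lie in a common abelian subalgebra, whence $\exp(a/n)^n=\exp(a)=g$.

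The strategy for the surjectivity of $\exp$ is Riemannian. First I would equip $G$ with a bi-invariant Riemannian metric, obtained by choosing any inner product on $\mathfrak{g}=T_eG$ and averaging it under $\mathrm{Ad}(G)$ against the (finite) Haar measure; the resulting $\mathrm{Ad}(G)$-invariant inner product extends by left translation to a metric that is automatically right-invariant as well. Next I would identify the geodesics of this metric through the identity: bi-invariance together with the Koszul formula yields $\nabla_X Y=\tfrac{1}{2}[X,Y]$ on left-invariant fields, so $\nabla_X X=0$, and hence the one-parameter subgroup $t\mapsto\exp(tX)$ is the unique geodesic with initial velocity $X$. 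In particular the Riemannian and Lie-theoretic exponentials coincide.

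At this point compactness enters: the metric is complete since $G$ is compact, so the Hopf--Rinow theorem furnishes, for each $g\in G$, a (minimizing) geodesic from $e$ to $g$. Such a geodesic has the form $t\mapsto\exp(tX)$ with $\exp(X)=g$, which gives the desired surjectivity. Root extraction follows as indicated in the first paragraph.

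The main obstacle, in my view, is the identification of geodesics with one-parameter subgroups, which is where the specific structure of a bi-invariant metric really intervenes; one has to verify that the formula $\nabla_X Y=\tfrac{1}{2}[X,Y]$ holds on \emph{all} left-invariant fields, not just on commuting ones, and then cite uniqueness of geodesics. A parallel route, avoiding explicit Riemannian geometry, would be to prove instead the conjugacy-of-maximal-tori statement that every element of a connected compact Lie group lies in some maximal torus---typically established via a Lefschetz fixed-point or degree-theoretic argument on $G/T$---and then appeal to the elementary surjectivity of $\exp$ on a torus $T\cong(\mathbb{R}/2\pi\mathbb{Z})^{\dim T}$. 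The further claim in the observation, concerning the equivalence between surjectivity of $\exp$ and surjectivity of all power maps $g\mapsto g^n$, is attributed to external references and is not part of what needs to be proved here.
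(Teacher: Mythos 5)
Your argument is correct and in substance coincides with the paper's: the Observation deduces root extraction from the surjectivity of $\exp$ on a connected compact group exactly as in your first paragraph, and it outsources that surjectivity to \cite[Corollary~2.1.2]{Do}, whose proof is precisely the bi-invariant-metric/geodesics/Hopf--Rinow argument you supply (the maximal-torus route you mention is the other standard proof). The only difference is that you prove the cited ingredient rather than citing it, and you are right that the second paragraph on power maps is attributed to external references and requires no proof here.
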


Going beyond these examples, one can discuss similar problems for
more general matrix equations. In this paper we restrict our
attention to word equations in a group $G$ of the form
\begin{equation} \label{eq:gr}
w(x_1,\dots ,x_d)=g
\end{equation}
where $w\in F_d$ is an element of the free $d$-generated group (a
group word in $d$ letters), and to polynomial equations in an algebra
$\cA$ of the form
\begin{equation} \label{eq:alg}
P(X_1,\dots ,X_d)=a
\end{equation}
where $P$ is an element of the free $d$-generated associative or Lie
algebra over a field $k$ (an associative or Lie polynomial whose
coefficients are {\it scalars} from $k$). In both cases the
right-hand side is fixed and solutions are sought among $d$-tuples
of elements of $G$ (resp. $\cA$).

This means that if, say., $\mathcal A$ is a matrix algebra, we
consider equations $XY-YX=C $ but {\it not} $BX-XB=C$ or
$AX^2+BX+C=0$. The latter equations are far more difficult,
and the interested reader is referred, e.g., to
\cite{Ge}, \cite{Sl}. As to word equations with constants, see \cite{GKP1}--\cite{GKP3},
\cite{KT} and the references therein (see, however, Section \ref{sec:const} below
for a brief account).

To avoid any confusion, we want to emphasize that in our set-up, solutions of \eqref{eq:gr} are sought
{\it in} $G$, and {\it not in an overgroup} of $G$. The latter option constitutes
a fascinating area of research going back to Bernhard Neumann \cite{Ne}; see \cite[Introduction]{KT}, the survey \cite{Ro} and the references therein for
an overview.

\section{Word equations in groups: surjectivity}

Let $w(x_1,\dots ,x_d)$ be a group word in $d$ letters which is not
representable as a proper power of some other word. For any group
$G$, denote by the same letter the evaluation map
\begin{equation} \label{map:gr}
\w\colon G^d \to G
\end{equation}
defined by substituting $(g_1,\dots ,g_d)$ instead of $(x_1,\dots ,x_d)$
and computing the value $w(g_1,\dots ,g_d)$. We call $\w$ the word map induced by
$w$. Examples \ref{ex:sl2r}
give rise to the following natural questions.

\begin{quest} \label{q:sur}
Let $G=\mathcal G(K)$ where $\mathcal G$ is a connected semisimple
algebraic $K$-group. Is $\w$ surjective when
\begin{itemize}
\item[(i)] $K=\mathbb C$ and $\mathcal G$ is of adjoint type;
\item[(i$^\prime$)] $K=\mathbb R$ and $\mathcal G$ is a split $K$-group of adjoint type;
\item[(ii)] $K=\mathbb R$ and $\mathcal G$ is compact?
\end{itemize}
\end{quest}

Surprisingly, Questions \ref{q:sur}(i), (i$^\prime$) are open, even in the simplest
case $G=\PSL(2,\BC)$, even for words in two letters. Naive attempts
to use Lie theory fail even in the cases where the exponential map
is surjective. Say, the map $\Fg\times \dots\times\Fg\to\Fg$ induced
by a Lie polynomial may not be surjective whereas the ``same'' word
(where each Lie bracket $[X_i,X_j]$ is replaced with the group
commutator $[x_i,x_j]=x_ix_jx_i^{-1}x_j^{-1}$) may induce a
surjective map $G^d\to G$. Here is a concrete example:
$$P = [[[X,Y], X], [[X,Y], Y]]\colon \mathfrak{sl}(2, \BC)\times \mathfrak{sl}(2, \BC)
\rightarrow \mathfrak{sl}(2, \BC)$$ is not surjective \cite{BGKP}
whereas the corresponding map $(x,y)\mapsto [[[x,y], x], [[x, y],
y]]$ is surjective on $\PSL(2,\mathbb C)$ (MAGMA computations in \cite[Section~9]{BaZa}).

There are positive results for some particular words. It is
classically known (\cite{PW}, \cite{Ree}) that under the assumptions of
Question \ref{q:sur}(i), the commutator map is surjective. In the
same setting, the image of any  Engel
word $w=[[x,y],y,\dots ,y]$ contains all semisimple and all
unipotent elements \cite{Go5}. This implies that such words are surjective on
$\PSL (2,\BC )$ (there are different proofs of the latter fact, see
\cite{BGG}, \cite{KKMP}, \cite{BaZa}, \cite{GKP3}).
Some other classes of words in two
variables for which the word map is surjective on $\PSL (2,\mathbb
C)$ were discovered in \cite{BaZa}, see also \cite{GKP1}--\cite{GKP3}.

As to Question \ref{q:sur}(ii), the situation is completely different.

\subsection{Negative-positive results for compact real groups} \label{Thom}
Under the assumptions of Question \ref{q:sur}(ii), most of known results
may be called negative-positive where negative results are obtained
by fixing a group and changing words and positive results, respectively,
are obtained by fixing a word and enlarging groups (see the first epigraph
to the paper).

The main negative-positive result for anisotropic forms of simple algebraic groups over the real field,
that is, connected compact simple  Lie groups (\cite[Ch.~5.2]{VO}), is the following

\begin{theorem} \label{thom-hls}
$
{}
$
\begin{itemize}
\item[(i)] Let $\G$ be an anisotropic form of a simple linear algebraic group over the real field $\R$, and let $G = \G(\R)$.
Then there exists a non-trivial metric $d(x, y)$ on $G$ such that for any real $\varepsilon >0$ there is a word $w \in F_2$ such that
$$d(1, \w(g_1, g_2)) < \varepsilon$$ for every  $(g_1, g_2) \in G^2$.
\item[(ii)] Let $1\ne w_1(x_1, \dots, x_n)\in F_n, \,\,1\ne w_2(y_1, \dots, y_m)\in F_m, \,\,\,w = w_1w_2$.
Then there exists $c = c(w_1,w_2)$ such that for every simple anisotropic linear algebraic group $\G$ of Lie rank $> c$
and for $G = \G(\R)$ the word map $\w\colon  G^{n+m}\rightarrow G$ is surjective.
\end{itemize}
\end{theorem}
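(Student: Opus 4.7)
The plan for part (i) is to realize the ``fix the group, vary the word'' heuristic of the first epigraph via a Thom-style iterated-commutator construction. I fix a bi-invariant Riemannian metric $d$ on the compact Lie group $G = \G(\R)$ coming from an $\operatorname{Ad}$-invariant inner product on the Lie algebra, and build inductively a sequence of nontrivial reduced words $w_n \in F_2$ with
$$\sup_{(g_1,g_2)\in G^2} d\bigl(1,\tilde{w}_n(g_1,g_2)\bigr) \longrightarrow 0\quad (n\to\infty).$$
I would start from $w_1 = [x,y]$ and at each stage nest $w_n$ inside a commutator with a carefully chosen word in $x,y$ (for instance an Engel-type nesting with well-adjusted exponents on $x$ and $y$). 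The analytic input is the Baker--Campbell--Hausdorff formula combined with compactness: if one entry of a commutator is $\varepsilon$-close to $1$ and the other is arbitrary in $G$, then the commutator is $C_G\varepsilon^2$-close to $1$, so iterated nestings decay quadratically once the ``one entry small'' condition is achieved. Given the $\varepsilon$ of the statement, pick $n$ large enough that the uniform bound drops below $\varepsilon$. The main technical obstacle, and the heart of Thom's construction, will be bootstrapping the recursion so as to bring \emph{arbitrary} input pairs close to $1$, not merely those already lying in a small neighborhood of the identity; this is what forces the precise nested form of the $w_n$ and requires care in the choice of exponents.

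For part (ii) my plan is a harmonic-analytic argument on the compact group $G$. Writing $A_i := \tilde{w}_i(G^{k_i})$ for the image of the $i$-th word, the disjointness of the variables in $w_1$ and $w_2$ yields
$$\tilde{w}(G^{n+m}) = A_1 \cdot A_2,$$
and the pushforward of normalized Haar measure under $\tilde{w}$ equals the convolution $\nu_1 * \nu_2$ of the corresponding pushforwards $\nu_i$. The essential quantitative input, proved in the spirit of Larsen--Shalev, is the following bound: for every nontrivial $v \in F_k$ there exist $c_0(v)$ and $\delta(v)>0$ such that whenever $\rk \G > c_0(v)$ one has
$$\|\hat\nu_v(\pi)\|_{\mathrm{op}} \;\le\; (\dim\pi)^{-\delta(v)}$$
for every nontrivial irreducible unitary representation $\pi$ of $G$. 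This is the compact analogue of the character bounds that drive Ore/Waring-type results for finite simple groups of Lie type, and it can be established using the Weyl character formula together with centralizer-dimension estimates.

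Given this input, I set $c := \max\{c_0(w_1), c_0(w_2)\}$ large enough so that the Witten-type zeta series $\sum_\pi (\dim\pi)^{2-\delta(w_1)-\delta(w_2)}$ converges on every compact simple $G$ with $\rk G > c$. Peter--Weyl expansion of $\nu_1 * \nu_2$ combined with the character bound then shows that $\nu_1 * \nu_2$ is absolutely continuous with respect to Haar measure, with $L^\infty$-density that can be made arbitrarily close to $1$. In particular, the density is strictly positive on all of $G$, so $\operatorname{supp}(\nu_1 * \nu_2) = A_1 \cdot A_2 = G$; closedness of $A_1 \cdot A_2$ (as a continuous image of the compact set $G^{n+m}$) then yields surjectivity of $\tilde{w}$. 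The principal difficulty is establishing the character-norm bound uniformly in the rank; once that is in hand, everything else is a routine noncommutative Fourier computation.
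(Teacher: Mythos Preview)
Your commutator estimate is off by an order: with a bi-invariant metric (or the unitarily invariant norm the paper uses) one has $l([g,h])\le 2\,l(g)\,l(h)$, so if only one entry is $\varepsilon$-small and the other is arbitrary in the compact group (bounded by $C$), the commutator is $2C\varepsilon$-small, \emph{linear} in $\varepsilon$, not $C_G\varepsilon^2$. Quadratic decay requires both entries small; the paper arranges this by taking $w_{2i}=[w_{2i-1},\,y\,w_{2i-1}\,y^{-1}]$, a commutator of a small element with a conjugate of itself. More seriously, the ``bootstrapping'' you correctly identify as the heart of the matter is exactly what your proposal does not supply. The paper's mechanism is a pigeonhole argument with Haar measure: for fixed $G$ and $\varepsilon$ there is $q=q(G,\varepsilon)$ such that \emph{every} $g\in G$ has some power $g^m$, $1\le m\le q$, with $l(g^m)<\varepsilon$. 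The word sequence is then built so that $w_{2i-1}=[w_{2i-2},x^{i}]$ for $i=1,\dots,q$; for any input $(g,h)$ some stage $i=m(g)\le q$ makes one commutator entry small, and the alternating ``square-up'' steps $w_{2i}$ then force quadratic contraction thereafter. Your ``Engel-type nesting with well-adjusted exponents'' gestures at this but does not pin it down.

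\textbf{Part (ii).} Your route is genuinely different from the paper's, and its entire weight rests on the asserted bound $\|\hat\nu_v(\pi)\|_{\mathrm{op}}\le(\dim\pi)^{-\delta(v)}$ for all nontrivial irreducible $\pi$ once $\rk\G>c_0(v)$. You do not prove this, and I am not aware of such a bound in the literature for \emph{arbitrary} words on compact simple Lie groups; unlike the commutator case there is no Frobenius-type formula for $\hat\nu_v(\pi)$, and ``Weyl character formula plus centralizer estimates'' is not a proof but a restatement of the difficulty. (Indeed, part (i) shows that for fixed $G$ the measures $\nu_v$ can concentrate near $1$, so any such bound must genuinely exploit the rank growth in a nontrivial way.) The paper avoids harmonic analysis entirely. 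It observes that every element of $G$ lies in a maximal torus $T$ (no unipotents in the anisotropic case) and that $T$ is contained in the square of the conjugacy class of any lift $\dot w_c$ of a Coxeter element. Hence it suffices to show $\dot w_c\in\Imm\tilde w_i$ for each $i$ once the rank is large. After reducing to types $\sA_r$ and $\sD_r$, this is done via an irreducible embedding $\xi\colon\SU_2\hookrightarrow\SU_{r+1}$: the word map on $\SU_2$ has connected, Zariski-dense image, hence hits torus elements of every sufficiently large order, and for $r$ large an appropriate such element maps under $\xi$ to a conjugate of $\dot w_c$. This is elementary compared with the Fourier bound you would need.
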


Statement (i) is a theorem of A.~Thom \cite{Th}. Actually, Thom considered $G = \SU_m(\C), w \in F_2$
and $d(x, y) = \left\lVert x - y \right\rVert$ where $\left\lVert \,\,\, \right\rVert$ is the operator norm on the unitary group.
However, for any compact group $G =\G(\mathbb R)$ we may fix a faithful continuous representation
$\rho\colon  G\rightarrow \SU_m(\C)$ and consider the restriction of $d(x, y)$ to $\rho(G)$.
Then we have the corresponding result for $\rho(G)\approx G$ if we consider the restriction of the map
$\w \colon \SU_m(\C) \times \SU_m(\C)\rightarrow \SU_n(\C)$ to $\rho(G)\times \rho(G)$. Also, instead of the operator norm, we can consider 
any unitarily invariant norm, say, the Frobenius norm on the space of square matrices $M_m(\C) \geq \SU_m (\C)\geq G$  
defined by $\left\lVert\{ x_{ij} \}\right\rVert = \sqrt{\sum_{ij} {\left\vert x_{ij}\right\vert}^2}$ 
(it is invariant under left and right multiplication by matrices from $\SU_m(\C)$).

Below we give
a little bit different proof of (i), essentially based on the ideas of \cite{Th}.

\bigskip

\noindent
{\it Proof of (i)}.
Let $\left\lVert \,\,\, \right\rVert$ denote a unitarily invariant norm on $G$, and let $d$ denote the induced metric.
For every $g \in G$ let $l(g)\coloneqq d(1, g)$. Then $l(g)\leq c$ for every $g\in G$ where $c\in \R$ is a constant
(because $G$ is a compact group). Standard properties of metric imply that
$$l(hgh^{-1}) = l(g)\,\,\,\text{ and}\,\,\,\, l([g, h]) \leq 2l(g)l(h)$$
for every $g, h\in G$ (see \cite[Lemma 2.1]{Th} for details). 

The crucial point of the proof in \cite{Th} is the following observation.
For a group $G$ of given Lie rank $r$ and any $\varepsilon \in \R_{>0}$
one can find $q = q(r, \varepsilon)$ such that for every $g \in G$ we have
\begin{equation}
\label{equa110}
l(g^m) < \varepsilon
\end{equation}
for some $1\leq m = m(g) \leq q$.



Indeed, fix $G$ and $\varepsilon$ and assume to the contrary that for any positive integer $q$  we have $l (g^k) \ge  \varepsilon$ for some $g\in G$ and for all $k \leq q$. Let $e < f \leq q$. Since $\left\lVert gx\right\rVert = \left\lVert x\right\rVert$ for every $g \in G$ and $x \in M_m(\C)$ we get
\begin{equation}
\label{equa111}
d(g^e, g^f) = \left\lVert g^e - g^f\right\rVert =  \left\lVert g^e(1- g^{f-e}) \right\rVert =  \left\lVert 1- g^{f-e} \right\rVert \geq \varepsilon.
\end{equation}
Define
\begin{equation}
\label{equa112}
V_{t, \varepsilon} \coloneqq \{x \in G\,\,\,\mid\,\,\,d(x, g^t) < \frac{1}{2}\varepsilon\}.
\end{equation}
It is a measurable set with respect to the Haar measure $\mu$ on $G$, and for each $t=1,\dots ,q$ we have $\mu(V_{t, \varepsilon}) = \mu(V_{1, \varepsilon}) > 0$. By \eqref{equa112} and \eqref{equa111},
the sets $\mu(V_{t, \varepsilon})$ are disjoint and therefore, for the disjoint union  $V_{q, \varepsilon} = \cup_{t} V_{t, \varepsilon}\subset G$ we have
\begin{equation}
\label{equa113}
\mu(V_{q, \varepsilon}) = \sum_{t = 1}^q \mu(V_{t, \varepsilon}) = q \mu(V_{1, \varepsilon}).
\end{equation}
The measure of the ball of radius $\frac{1}{2}\varepsilon$ is strictly positive and depends on $\varepsilon$. Thus, \eqref{equa113} implies that for a sufficiently large $q$ the subset $V_{q, \varepsilon}\subset G$  will have the measure  which is bigger than any given positive number. This contradicts the compactness of $G$.

\medskip


\bigskip

Define a sequence of words in $F_2$ by setting
$$w_0 = [ x, y], w_1 = [w_0, x], w_2 = [w_1, y w_1 y^{-1}], \dots, $$$$ w_{2i-1} = [w_{2i-2}, x^i],  w_{2i } = [w_{2i-1}, yw_{2i-1} y^{-1}], \dots $$
It is easy to see that all words in this sequence are non-trivial.

Fix $\varepsilon >0$. There exists a constant $C> 1$ such that $l(g) \leq C$ for every $g \in G$ (because $G$ is compact). We may assume $\varepsilon <\frac{1}{4C}$.  One can find a positive integer
$q = q(r, \frac{\varepsilon}{C})$ with the following property: for every $g \in G$ there is a positive integer $m = m(g) \leq q$ such that 
$l(g^m) < \frac{\varepsilon}{2C} $ (see \eqref{equa110}).
 Then for every $h \in G$ we have
$$l (w_{2m-1}(g, h)) = l([w_{2m-2}(g, h), g^m]) \leq 2\underbrace{l(w_{2m-2}(g, h))}_{\leq C}\underbrace{l(g^m)}_{< \e/2C} < \varepsilon, $$
$$l(w_{2m}(g, h) ) = l([w_{2m-1}(g, h), h w_{2m-1}(g,h)h^{-1}])\leq$$
$$\leq 2 l(w_{2m-1}(g, h))^2 < 2\e^2  <  \e \frac{2}{4C}=  \frac{\varepsilon}{2C}. $$
Suppose now that  $l(w_{2k}(g, h)) < \frac{\varepsilon}{2C}$ for some $k \geq m$. Then
$$l (w_{2(k+1)-1}(g, h)) = l([w_{2k}(g, h), g^{k+1}] \leq 2\underbrace{l(w_{2k}(g, h))}_{<  \varepsilon/2C}\underbrace{l(g^{k+1})}_{\leq C} < \varepsilon, $$
$$l(w_{2(k+1)}(g, h) ) = l([w_{2(k+1)-1}(g, h), h w_{2(k+1)-1}(g,h)h^{-1}]) \leq $$$$\leq 2 l(w_{2(k+1)-1}(g, h))^2 < 2\e^2 < \frac{\varepsilon}{2C}. $$

Thus, by induction,
we have $l(w_{2k}(g, h)) < \frac{\varepsilon}{2C}$
for every $g, h \in G$ and for every  $k \geq q$. This proves the  statement.
\qed

\bigskip

Statement (ii) is a theorem of Hui--Larsen--Shalev \cite{HLS}. It can be viewed
as a step towards a conjecture of Larsen (attributed in \cite{ST} to his 2008 AMS talk)
which asserts that any word map is surjective on a connected compact simple real linear
algebraic group $G$ provided its rank is sufficiently large. For certain words, a weaker form
of this conjecture was proved in \cite{ET1} for unitary groups.

Let us give a sketch of proof of (ii) following \cite{HLS}.

\medskip

\noindent
{\it Proof of (ii)}.
Every element of $G$ is contained in $T = \T(\R)$ where $\T$ is  a maximal torus of $\G$ (recall that $\G$ is anisotropic and thus $G$ does not contain unipotent elements). Let $N_G(T)$ denote the normalizer of $T$ in $G$. We have
$N_G(T)/T \approx W$ where $W$ is the Weyl group of $\G$ (see \cite[6.5.9]{GoGr}). Let $\dot w_c\in N_G(T)$ denote a preimage of a Coxeter element $w_c$. 

Recall that if $R$ is an irreducible root system and 
$\Pi = \{\alpha_1, \dots, \alpha_n\}\subset R$ is a fixed set of simple roots, 
a {\it  Coxeter element} of $W$ is any product of reflections $w_c=\prod_i w_{\alpha_i}$ where each $\alpha_i \in \Pi$ appears
exactly once (it is allowed to take reflections $w_{\alpha_i}$ in such a product in any order); see \cite[V.6]{Bou} for details. 

We have $\dot w_c^{-1} = \sigma \dot w_c\sigma^{-1}t_0$ for some $ \sigma \in G$ and $t_0\in T$, and one can show that every element $t \in T$ can be written in the form $t = \dot w_c (s \dot w_c^{-1}s^{-1})$ for some $s \in T$
(see, e.g., \cite{GKP3}).  Hence every element of $Tt_0^{-1} = T$ is contained in the square of the conjugacy class of $\dot w_c$. Note that the image of a word map is invariant under conjugations. Thus, to prove (ii), we have to show that $\dot w_c\in \Imm \w^\prime$ for every non-trivial word $w^\prime \in F_n$
and for the corresponding word map $\w^\prime \colon G^n \rightarrow G$ under the condition that the Lie rank of $\G$ is big enough when $w^\prime$ is fixed.

We may restrict our considerations to the case when $\G$ is of one of the classical types $\sA_r$, $\sB_r$, $\sC_r$, $\sD_r$.
Since the root system $\sD_r$ is a subset of both $\sB_r$ and $\sC_r$, any group
$\G = \G(\C)$ of type $\sB_r$ or $\sC_r$  has a subgroup $\G_1 = \G_1(\C)$ of type $\sD_r$.
Moreover, a maximal compact Lie subgroup $\K_1\leq \G_1$
is also a compact Lie  subgroup of $\G$ and is therefore contained in a maximal compact Lie subgroup $\K$ of $\G$. Let $T$ be a maximal torus of $\K_1$.
Note that $T$ coincides with some maximal torus of $\K$ because $\G$ and $\G_1$ are of the same Lie rank. Every element of $\K$ is conjugate to an element of $T$  which is also a maximal torus of $\K_1$. Therefore, once
we prove that $\w\colon \K_1^{n+m}\rightarrow \K_1$ is surjective, this implies that
$\w\colon \K^{n+m}\rightarrow \K$ is also surjective. Hence we only have to consider the cases $\sA_r$, $\sD_r$.

Let $\G$ be a simple, simply connected group of type $\sA_r$. Then $G = \SU_{r+1}(\C)$.
Consider the word map $\o\colon \SU_2(\C)^n\rightarrow \SU_2(\C)$ for any non-trivial word $\omega \in F_n$.
The image of this map is a connected, compact, non-trivial (being Zariski dense in $\SL_2(\C)$ by the Borel theorem, see Theorem \ref{Borel} below) subset of $G$ containing the identity. The
intersection of a maximal torus $T^\prime$ of $\SU_2(\C)$ and $\o(\SU_2(\C)^n)$ is also a non-trivial compact subset of $T^\prime$ containing $1$. Hence
there is $d$ such that any $t \in T^\prime$ of order $>d$ belongs to $\o(\SU_2^n)$.
Further,  let $r +1 > d$, and let $\xi\colon \SU_2(\C)\rightarrow \SU_{r+1}(\C)$ be an
irreducible unitary representation of $\SU_2(\C)$.
Note that this representation is the restriction to compact subgroups of 
the representation of $\SL_2(\C)$ on binary forms of degree $r$ (see, e.g., \cite[Prop.~4.11]{Hal}).
Denote by $\epsilon_m$ any primitive root of 1 of degree  $m$.  Let 
$$t = \begin{cases} \epsilon_{r+1} \,\,\,\text{if }\,\,\,r+1\,\,\,\text{is odd}\\
\epsilon_{2(r+1)} \,\,\,\text{if }\,\,\,r+1\,\,\,\text{is even}\end{cases}.$$
Then the set of eigenvalues of $\xi(t)$ consists of all roots $\sqrt[r+1]{1}$ if $r+1$ is odd, and of  all roots $\bf \sqrt[r+1]{1}$ multiplied by a fixed
root $\epsilon^r_{2(r+1)}$ if $r+1$ is even. One can find a preimage $\dot w_c\in \SU_{r+1}(\C)$ of a Coxeter element $w_c$ which has such a set of eigenvalues. 
(Note that a Coxeter element of $\SU_{r+1}(\C)$ corresponds to a monomial matrix of cyclic permutations of  an orthogonal basis.) Then $\xi (t)$ is conjugate to $\dot w_c$ in $\SU_{r+1}(\C)$.
 Indeed, both matrices
are unitary and have the same set of eigenvalues.

Now consider non-trivial word maps  $$\w\colon
\SU_{r+1}(\C)^n\rightarrow \SU_{r+1}(\C),\,\,\,\o\colon \SU_2(\C)^n\rightarrow \SU_2(\C)$$   which correspond to the same word $w$. The diagram
$$\SU_2(\C)^n\stackrel{\o}{\rightarrow} \SU_2(\C)$$
$$\downarrow \xi^n \,\,\,\,\,\,\,\,\,\downarrow \xi$$
$$\SU_{r+1}(\C)^n\stackrel{\w}{\rightarrow} \SU_{r+1}(\C),$$
where $\xi^n ((g_1, \dots, g_n)) \coloneqq (\xi(g_1), \dots, \xi_(g_n)$, is commutative because both $\xi$ and $\xi^n$ commute with word maps. Then, if we have
$\dot w_c $ in $\Imm \w\circ \xi$, we also have $\dot w_c\in \Imm \w$.
Thus we get our statement for the case $\sA_r$. The case $\sD_r$ is treated by similar arguments, see \cite[Section~2]{HLS} for details. 
\qed

\begin{remark} \label{q:local}
Let $\cG$ be  an arbitrary anisotropic simple group defined over a non-archimedean local field $k$
(which is necessarily of type $\sA_n$). Recall that by the Bruhat--Tits--Rousseau theorem
(see \cite{Pr} for a short proof), $\cG$ is anisotropic if and only if
$G=\cG(k)$ is compact in the topology induced by the valuation of $k$.
We have $G=\SL(1,D)$, the group of elements of reduced norm 1 of a division $k$-algebra $D$. Moreover, there exists a series $\{G_i\}_{i=0}^\infty$ of normal subgroups  $G_i \lhd G$ such that
$$G_0 = G, \,\,\,[G_0, G_0] = G_1,\,\,\, [G_1, G_i] \leq G_{i+1}, \dots$$
with 
$$G_i \subset 1 + \mathfrak{P}^i_D, \,\,\,\,\text{where}\,\,\, \mathfrak{P}^i_D = \{x \in D\,\,\,\mid\,\,\,v_D(x) \geq i\}$$
(here $v_D(x) = \frac{1}{c}v_p(\mathrm{Nrd}_{D/k}(x))$ is the non-archimedean discrete valuation on $D$ induced by the  non-archimedean discrete valuation $v_p$ on $k$, $c$ is the index of $D$, 
$\mathrm{Nrd}_{D/k}$ is the reduced norm; see \cite{Ri}, \cite[1.4]{PR}). Let now $\left\lVert x \right\rVert_p \coloneqq p^{- v_D(x)}$ be the corresponding norm on $D$. 
Since $\mathrm{Nrd}_{D/k}\colon G\to k^*$ is a group homomorphism, the norm  $\left\lVert  \,\,\, \right\rVert_p$ is invariant with respect to left and right multiplication 
by elements of $G$.  
Further, let $F_n$ be the free group of the rank $n$, and let $$F_n^0 \coloneqq F, F_n^1 \coloneqq [F_n^0, F_n^0], \dots , F_n^i \coloneqq [F_n^1, F_n^{i-1}], \dots $$ 
Then for every $w \in F_n^i$ and every $(g_1, \dots, g_n) \in G^n$ we have
$$\left\lVert\w(g_1, \ldots, g_n) - 1 \right\rVert _p \leq p^{-i}.$$
Thus Thom's phenomenon can also be observed for simple anisotropic groups over non-archimedean local fields.   
\end{remark}

\begin{remark}
Thom's phenomenon has been further
investigated in \cite{ABRdS}, \cite{ET2} where it got a name
of ``almost law'' in $G$.
\end{remark}

In this setting, there are also some positive results for particular
words:
\begin{itemize}
\item any Engel word is surjective on any compact $G=\G(\R)$ (\cite{ET1} for $\SU (n)$, \cite{Go5} in general);
\item if $w\in F_2$ does not belong to the second derived subgroup $F_2^{(2)}$, then for infinitely many $n$
the induced word map is surjective on $\SU (n)$ \cite{ET1}.
\end{itemize}

\subsection{Non-compact real groups}
Little is known here.
The following question seems the most
challenging.

\begin{quest}
Can one observe the phenomenon of ``almost laws'' in a {\em
non-compact} simple linear algebraic $\BR$-group $\cG$? Say, in a
{\em split} $\BR$-group? More precisely, let $G=\cG(\BR)^0/Z$
be the identity component of the group of real points of $\cG$
modulo centre. ($G$ is simple, see, e.g., \cite[Section~3.2]{PR}.)

Does there exist a non-power word $w$ ($w \ne v^k$, $k>1$)
inducing a non-surjective map $\w\colon
G\times\dots\times G\to G$?
\end{quest}

Even the case $\cG=\SL_2$ is open. We can only prove the following simple assertion,
which is a generalization of a result from \cite{HLS}.

\begin{prop} \label{prop:PSL2R}
Let $G = \PSL_2(\R)$, and let $w\in F_d$ be any nontrivial word.
Then the image of the word map $\w\colon G^d \rightarrow G$ contains all split semisimple elements.
Moreover, if $\Imm\, w$ contains an involution, then $\Imm\, w $ contains all semisimple elements of $G$.
\end{prop}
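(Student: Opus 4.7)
The plan is to prove Part~1 by an unboundedness-of-trace argument combined with Borel's dominance theorem, and Part~2 by introducing the class-parameter set realised in $\Imm\,\w$ and analysing it via openness and connectedness.

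For Part~1, Borel's theorem gives that the polynomial $\tau(g_1,\dots,g_d)\coloneqq\tr\bigl(\w(g_1,\dots,g_d)\bigr)$ is non-constant on $\SL_2(\BC)^d$, and by Zariski density of $\SL_2(\BR)^d$ in $\SL_2(\BC)^d$ it remains non-constant on the real locus. To upgrade non-constancy to unboundedness on $\SL_2(\BR)^d$, I would choose Sanov--ping-pong generators $f_1,\dots,f_d$ of a free subgroup of $\SL_2(\BR)$; then $\w(f_1^N,\dots,f_d^N)$ is, for each $N$, a non-trivial reduced word in the $f_i^N$'s whose translation length on $\mathbb{H}^2$ tends to infinity with $N$ by the standard ping-pong estimate, so its trace $2\cosh(\ell/2)$ is unbounded. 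Connectedness of $\SL_2(\BR)^d$ and continuity of $\tau$ then make $\tau(\SL_2(\BR)^d)$ a connected interval containing $\tr(I)=2$ and unbounded, so it covers at least one of $[2,\infty)$ and $(-\infty,-2]$. Since each hyperbolic class in $\PSL_2(\BR)$ is uniquely determined by $\tr^2>4$, conjugation-invariance of $\Imm\,\w$ gives every hyperbolic class, completing Part~1.

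For Part~2 set $A\coloneqq\{\theta\in(0,\pi):C(\theta)\subset\Imm\,\w\}$, where $C(\theta)$ denotes the $\PSL_2(\BR)$-conjugacy class of the rotation matrix $k_\theta$, so that the unique involution class is at $\theta=\pi/2$; by hypothesis, $\pi/2\in A$. Borel's theorem produces a Zariski open subset of $\PSL_2^d$ where $d\w$ is surjective, which meets the Zariski-dense real locus $\PSL_2(\BR)^d$ in a real point; the implicit function theorem then shows that $\Imm\,\w$ contains a $\PSL_2(\BR)$-open neighbourhood of the value at every regular real preimage. Applied to $\sigma=k_{\pi/2}$, such a neighbourhood contains the matrices $k_{\pi/2\pm\delta}$, which represent the two distinct $\PSL_2(\BR)$-classes $C(\pi/2\pm\delta)$, so both lie in $A$ for sufficiently small $\delta>0$; applied at any $\theta_0\in A$ with a regular preimage, it shows $A$ is open in $(0,\pi)$. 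Together with Part~1 and the hypothesis, connectedness of $\tr^2(\Imm\,\w)\subset[0,\infty)$ forces $\tr^2(\Imm\,\w)=[0,\infty)$, so for every $\theta\in(0,\pi/2)$ at least one of $\theta,\pi-\theta$ lies in $A$, i.e.\ $A\cup(\pi-A)=(0,\pi)$.

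The main obstacle is improving $A\cup(\pi-A)=(0,\pi)$ to $A=(0,\pi)$, that is, ruling out the asymmetric configuration where $\Imm\,\w$ contains $C(\theta)$ but not $C(\pi-\theta)$ for some $\theta\neq\pi/2$. My approach exploits the symmetry near $\sigma$: the infinitesimal perturbation $\sigma+\varepsilon Y$ with $Y$ tangent to $\SL_2$ at $\sigma$ gives $\theta(\sigma+\varepsilon Y)=\pi/2-\tfrac{\varepsilon}{2}\tr(Y)+O(\varepsilon^2)$, so the open $\PSL_2(\BR)$-neighbourhood of $\sigma$ in $\Imm\,\w$ produced by the implicit function theorem is automatically $\theta\leftrightarrow\pi-\theta$ symmetric and gives a full symmetric interval $(\pi/2-\delta_0,\pi/2+\delta_0)\subset A$. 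Propagating this symmetry to all of $(0,\pi)$ requires showing $A$ is closed in $(0,\pi)$; the hard step is controlling sequences of preimages in the non-compact group $\PSL_2(\BR)^d$ to prevent escape to infinity, which I would handle via Hardt's triviality theorem applied to the semi-algebraic map $\w$, yielding continuous semi-algebraic sections on a Zariski-open base and hence bounded preimages that realise the limit class $C(\theta_\infty)$.
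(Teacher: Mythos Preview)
Your Part~1 is correct and close in spirit to the paper's argument: both exploit that the trace of $\w$ on $\SL_2(\R)^d$ is a continuous non-constant function on a connected domain taking the value $2$, hence has image an interval containing a ray $[2,\infty)$ or $(-\infty,2]$. The paper obtains unboundedness by exhibiting a non-constant two-variable real polynomial inside $\tr\circ\w$ (whose range is automatically a closed ray or all of $\R$), while you use a ping-pong estimate; either works.

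For Part~2, your reduction to $A\cup(\pi-A)=(0,\pi)$ via connectedness of $\tr^2(\Imm\,\w)$ is fine and is exactly what the paper does. The problem is everything after that. The ``main obstacle'' you identify simply does not exist: the image of any word map is invariant under $\Aut(G)$, not merely under inner automorphisms. For $G=\PSL_2(\R)$ one has $\Aut(G)\cong\PGL_2(\R)$, and conjugation by $\mathrm{diag}(1,-1)$ sends $k_\theta$ to $k_{-\theta}$, i.e.\ $C(\theta)$ to $C(\pi-\theta)$. Hence $A=\pi-A$ automatically, and $A\cup(\pi-A)=(0,\pi)$ gives $A=(0,\pi)$ in one line. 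This is what the paper's phrase ``all elements $g\in\SL_2(\R)$ with $\tr g\ge 0$'' is (implicitly) using.

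Your proposed substitute has two genuine gaps. First, for openness you need a \emph{regular} real preimage of $\sigma=k_{\pi/2}$ (and of each $k_{\theta_0}$), but Borel's theorem only gives regular points over a Zariski-open set of the \emph{target}; nothing guarantees that the specific value $\sigma$, or any given $\theta_0\in A$, lies in that set. Second, Hardt's trivialization stratifies the base into finitely many semi-algebraic pieces with continuous sections over each piece, but gives no control across strata boundaries; a sequence $\theta_n\to\theta_\infty$ may cross into a different stratum where the chosen section blows up, so closedness of $A$ does not follow. Replace the entire second half of Part~2 with the one-line $\Aut$-invariance observation.
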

\begin{proof}
Note that for $d = 1$ the statement obviously holds. Further, we need the following fact, which generalizes an assertion from \cite[proof of Theorem~3.1]{HLS}.
\begin{lemma}
\label{lem972}
Let $L$ be any infinite field (not necessarily of characteristic zero), and let
$\o\colon \SL_2(L)^n\rightarrow \SL_2(L)$ be the word map corresponding to
a non-trivial word $\omega \in F_n$. Then there exists a non-constant polynomial $\Phi(x, y) \in L[x,y]$  such that $\Phi(0, 0) = 2$ and
$$ \Phi (\alpha, \beta)\in \Imm\, \tr\circ \o\,\,\,\,\text{for every}\,\,\,\alpha , \beta \in L.$$
\end{lemma}

\begin{proof}
Let $(g_1, g_2, \dots, g_n) \in \SL_2(L)^n$. We may assume $\omega(1, g_2, \dots, g_n) =1$ for every $g_2, \dots, g_n$ (otherwise we may reduce our consideration to the case of the word in $n-1$ variables).

Now, fix the elements $g_2, \dots, g_n$ and take the element $g_1$ of the form
\begin{equation}
\label{equa555}
g_1 =
\begin{pmatrix} 1 & y\cr
x&1+xy\cr \end{pmatrix},\,\,\,x, y\in L.
\end{equation}
Then
$$g_1^{-1}=\begin{pmatrix} 1+xy & -y\cr
-x&1\cr \end{pmatrix}.
$$
Hence  $\tr \o (g_1,g_2, \ldots, g_n) = \Phi(x, y)$ is a  polynomial in two variables $x,y$ over a field $L$.
Suppose that for every fixed $g_2, \dots, g_n \in \SL_2(L)$ we have
$\Phi(x,y)\equiv c$, a constant polynomial. Then $c=2$ for every $g_1$
because $$\Phi(0, 0)= \tr (\o(1, g_2, \dots, g_n) = \tr 1  = 2.$$
Since every non-central element of $\SL_2(L)$ is conjugate to an element of the form (\ref{equa555}) (see \cite{EG1}),
the equality $\tr \w(g_1, g_2, \dots, g_n)=2$ for every $g_1, g_2, \dots, g_n \in \SL_2(L)$,  where $g_1$ is an element of the form (\ref{equa555}), implies the equality $\tr \w(g_1, g_2, \dots, g_n)=2$ for every $g_1, g_2, \dots, g_n \in \SL_2(L)$. Thus,
the image of
$\o\colon\SL_2(L)^n\rightarrow \SL_2(L)$ consists of unipotent elements. Since $\SL_2(L)$ is Zariski dense in $\SL_2(\overline{L})$ (where $\overline{L}$ is the algebraic closure of $L$) \cite[18.3]{Bo2}, the image of $\o\colon\SL_2(\overline{L})^n\rightarrow \SL_2(\overline{L})$ also consists of unipotents elements, which contradicts Borel's dominance theorem (see Theorem \ref{Borel} below).
Hence there are  elements $g_2, \dots, g_n\in \SL_2(L)$ such that
$$\Phi(x, y) = \tr \o\left(\begin{pmatrix} 1 & y\cr
x&1+xy\cr \end{pmatrix}, g_2, \dots, g_n\right)$$
is a non-constant polynomial.
\end{proof}

We also use the following well-known lemma.
\begin{lemma}
\label{lem555}
Let $g \in \SL_2(\R)$  be a semisimple element, $g \ne \pm 1$.
It is split if and only if $\lvert\tr g\rvert >  2$. It is of order $4$ if and only if $\tr g = 0$.
\end{lemma}
\begin{proof}
If $g \in \SL_2(\R)$, then either it belongs to a split torus
and is then conjugate to
$$\begin{pmatrix} \alpha & 0\cr
0&\alpha^{-1}\cr \end{pmatrix}, \,\,\, \alpha \in \R^*,$$
or it belongs to an anisotropic torus and is then conjugate to
$$\begin{pmatrix} \cos \phi & \sin \phi\cr
-\sin\phi&\cos\, \phi\cr \end{pmatrix}, \,\,\, \phi \in \R.$$
In the first case
$$\left\vert \tr g \right\vert = \left\vert \alpha + \alpha^{-1}\right\vert \geq 2. $$
In the second case
$$\left\vert \tr g\right\vert = 2\left\vert \cos\, \phi\right\vert \leq 2.$$
Moreover,
$$ \left\vert \tr g\right\vert = 0 \Leftrightarrow \cos\, \phi = 0 \Leftrightarrow \text{the order of}\,\,\,g \,\,\,\text{is equal to}\,\,\,4.$$
\end{proof}

Consider now the word map $\w\colon \SL_2(\R)^d \rightarrow \SL_2(\R)$ corresponding to the same word $w$ (we also denote it by $\w$).
We  may assume that $w(1, g_2, \dots, g_d) =1$.

Further, let $\Phi \in \R[x, y]$ be a polynomial satisfying the condition of Lemma \ref{lem972} (here $L =\R$).
Note that the set of values of a non-constant real polynomial consists either of all real numbers,
or of all real numbers $\geq r$,  or of all real numbers $\leq r$ for some $r \in \R$.
Since $2 = \tr \w(1, g_2) = \Phi(0, 0)$,
either all elements $g \in \SL_2(\R)$ with $\tr g \geq 2$, or all elements with $\tr g \leq  2$ belong  to
the image of $\w\colon \SL_2(\R)^d\rightarrow \SL_2(\R)$ (see Lemma \ref{lem972}). Since for every split semisimple element $g$
of $\SL_2(\R)$
we have $\tr g \geq 2$ or $\tr (-g) \geq 2$ (Lemma \ref{lem555}), every split semisimple element of $G = \PSL_2(\R)$ belongs to the image
of the map $G^d \rightarrow G$.  Suppose now that there is an element of order $4$ in the image of $\w\colon\SL_2(\R)^d\rightarrow \SL_2(\R)$ (obviously, this is equivalent to the existence of an element of order $2$ in the image of the word map
$\w\colon \PSL_2(\R)^d\rightarrow \PSL_2(\R)$). Then, according to Lemmas \ref{lem972} and  \ref{lem555},
either all elements $g \in \SL_2(\R)$ with $\tr g \geq 0$ or all elements with $\tr g \leq  0$
belong to the image of the map $\w\colon\SL_2(\R)^2\rightarrow \SL_2(\R)$  and therefore all semisimple elements belong to  the image of the map
$\w\colon \PSL_2(\R)^d\rightarrow \PSL_2(\R)$.
\end{proof}

\begin{remark}
The difference between the compact and noncompact cases may turn out
to be essential also at the level of eventually applicable
techniques. For example, in the compact case one can try to detect
the non-surjectivity of the word map by homological methods. Indeed,
denote $M=\cG(\BR)\times\dots\times\cG(\BR)$, $N=\cG(\BR)$, $m=\dim
_{\BR}(N)$, and assuming that $N$ is compact, consider the induced
map of homology groups $w^*\colon H_m(M)\to H_m(N)$ (the
coefficients may be arbitrary because $M$ and $N$ are orientable as
any Lie group). If $w^*$ is a nonzero map, then $\w$ must be a
surjective map: otherwise it could be factored through
$N'=N\setminus\{\text{\rm{point}}\}$. This would lead to a
contradiction: $H_m(N')=0$ because $N'$ is not compact (see, e.g.,
\cite[Proposition~3.29]{Hat}). Apparently, this approach may only work
in the compact case when $H_m(N)\ne 0$ (see, e.g.,
\cite[Theorem~3.26]{Hat}). (We thank E.~Shustin for this observation.)

See \cite{KT} for alternative approaches of topological nature.
\end{remark}

Further, assuming that Question \ref{q:sur} is answered in the
negative, one can ask whether there are obstructions to the
surjectivity detectable at the level of real points.

\begin{quest}
Let $\cG$ be a connected simple linear algebraic $\BR$-group of adjoint type.
Let $G=\cG(\BR)^0$ be the identity component of the group of real points.
Does there exist a non-power word $w$ ($w \ne v^k$, $k>1$) such that the map
$G\times\dots\times G\to G$ is surjective but the
map $\cG(\BC)\times\dots\times\cG(\BC)\to\cG(\BC)$ is not?
\end{quest}

Note that for power words the situation of this question can arise:
say, look at $w=x^2$ and $\cG$ a compact form of a simple group of
type B, C or D. Then the squaring map is surjective on $G$
(see Observation \ref{compact}) but not on $\cG(\C)$ (see Theorem \ref{St}).

\section{Word equations with general right-hand side}

As Question \ref{q:sur}(i) is still unanswered and Question
\ref{q:sur}(ii) is answered in the negative, one has to decide how
to modify the approach to equation \eqref{eq:gr}. In this
connection, let us quote \cite[Principle~2.18]{Ku1} (rechristening
it and hoping that the reader will excuse self-citation):

\begin{prin}
A reasonable property of a reasonable mathematical object lying
inside a reasonable class of objects may not hold but it will hold
at least for an object in general position (if not always), provided
the class under consideration is enlarged or restricted, if
necessary, in an appropriate way.
\end{prin}

In even more loose terms, this principle is formulated in the second epigraph to the paper.


\begin{remark}
In the set-up under consideration, the spirit of this principle
consists in solving equation \eqref{eq:gr} for a ``general'' element
$g$ of the group $G$, when $G$ either runs through the same class of
groups, namely, the class of (rational points of) simple linear
algebraic groups of adjoint type (so we stay within Sichuan
province), or through some larger class (so we try to extend the
areal).

Certainly, the problems become meaningful only after one makes the
term ``general'' (or similar often used euphemisms, such as
``generic'', ``random'', ``typical'', and the like) into some
precisely defined notion. Note that the answer to the relevant
questions may heavily depend on the choice of such a definition.
There are lots of possibilities, and we are not going to discuss
them in this paper, referring the reader, say, to the papers of
M.~Gromov \cite{Gr1}, \cite{Gr2}, A.~Ol'shanski\u\i \ \cite{Ols},
Y.~Ollivier \cite{Oll}, I.~Kapovich and P.~Schupp \cite{KaSc1},
\cite{KaSc2}, N.~M.~Dunfield and W.~P.~Thurston \cite{DuTh},
M.~Jarden and A.~Lubotzky \cite{JL}, Y.~Liu and M.~M.~Wood \cite{LW},
etc., for comparing different approaches to randomness in groups.

Anyhow, we cannot avoid mentioning the only general result of this
flavour, a theorem of A.~Borel.

\end{remark}

\begin{theorem} \cite{Bo1} \label{Borel}
If $K$ is a field, $\G$ is a connected semisimple linear algebraic
$K$-group, and $w\ne 1$, then the corresponding word map $\w\colon
\G^d\to \G$ is {\em dominant}.
\end{theorem}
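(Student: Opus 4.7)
Dominance is a property of the underlying morphism $\tilde w$ between $K$-varieties and is preserved under faithfully flat base change, so I would first pass to the algebraic closure $\bar K$. Decomposing $\mathcal{G}$ up to central isogeny as a product of almost simple simply connected factors, and using that $\tilde w$ is compatible with such decompositions and with central isogenies (dominance passes through both), I would reduce to the case where $\mathcal{G}$ is almost simple and simply connected. Since $\operatorname{Im}(\tilde w)$ is invariant under $\mathcal{G}$-conjugation, its Zariski closure is a union of conjugacy classes; combining this with the fact that regular semisimple conjugacy classes have maximal dimension $\dim \mathcal{G}-\mathrm{rk}\,\mathcal{G}$ and sweep out an open set of $\mathcal{G}$, dominance is equivalent to the composition $\chi \circ \tilde w \colon \mathcal{G}^d \to T/W$ with the adjoint quotient $\chi$ being dominant.

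\textbf{The abelian case.} Suppose first that $w \notin [F_d,F_d]$, i.e., the abelianization $(n_1,\dots,n_d) \in \mathbb{Z}^d$ is non-zero. Fixing a maximal torus $T \subset \mathcal{G}$, the restriction $\tilde w|_{T^d} \colon T^d \to T$ is the multi-power map $(t_1,\dots,t_d) \mapsto t_1^{n_1}\cdots t_d^{n_d}$; since some $n_i \ne 0$ this map is dominant (indeed surjective) onto $T$. Composing with the finite map $T \to T/W$ yields dominance of $\chi\circ\tilde w$, hence of $\tilde w$.

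\textbf{The commutator case.} Suppose now that $w \in [F_d,F_d]$. Here $\tilde w|_{T^d} \equiv 1$, so one must work at the level of differentials at more general substitutions. Fix $(g_1,\dots,g_d) \in \mathcal{G}^d$ and identify all tangent spaces with $\mathfrak g$ via right-translation. Expanding $w = x_{i_1}^{\epsilon_1}\cdots x_{i_k}^{\epsilon_k}$ by the Leibniz rule produces
\[
d\tilde w_{(g_1,\dots,g_d)}(\xi_1,\dots,\xi_d) \;=\; \sum_{j=1}^k \epsilon_j\, \operatorname{Ad}(u_j)\,\xi_{i_j},
\]
where $u_j = g_{i_1}^{\epsilon_1}\cdots g_{i_{j-1}}^{\epsilon_{j-1}}$ is the prefix up to position $j$. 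Grouping by variable index, this reads $d\tilde w = \sum_{i=1}^d A_i(g)\,\xi_i$, in which each $A_i(g)$ is an integer linear combination of $\operatorname{Ad}$-operators whose coefficients are determined by the Fox derivatives $\partial w/\partial x_i \in \mathbb{Z}[F_d]$ evaluated at $g$.

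\textbf{Main obstacle.} The crux of the argument is to show that for generic $(g_1,\dots,g_d)$ the combined image of the operators $A_i(g)$ equals $\mathfrak g$. Pure torus substitutions are not enough in this case: because the exponent sums of $w$ vanish, $A_i(t)$ acts as zero on the Cartan subalgebra $\mathfrak h$ for every $t \in T^d$, so one must mix in unipotent components of the $g_i$ in order to reach $\mathfrak h$. The non-vanishing of at least one Fox derivative of $w$, which follows from $w \ne 1$ via the Magnus embedding of $F_d$ into a ring of non-commutative formal power series, gives a non-trivial polynomial condition on $\mathcal{G}^d$; a careful root-space analysis together with the irreducibility (modulo the centre) of the adjoint representation of the almost simple group $\mathcal{G}$ then forces the differential $d\tilde w$ to be surjective on a dense open subset of $\mathcal{G}^d$, completing the proof.
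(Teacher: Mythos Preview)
The paper does not supply its own proof of this theorem: it is stated with a citation to Borel's original article \cite{Bo1} and then used as a black box throughout. So there is no ``paper's proof'' to compare against, and your attempt must be judged on its own merits and against Borel's actual argument.

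Your reductions and the abelian case are fine. The gap is in the commutator case. You correctly compute the differential in terms of Fox derivatives and correctly observe that torus substitutions kill it on $\mathfrak h$, so genuinely non-commutative substitutions are needed. But the last paragraph is where the entire content of the hard case lives, and it is only an assertion: you say that non-vanishing of a Fox derivative ``gives a non-trivial polynomial condition'' and that ``a careful root-space analysis together with the irreducibility of the adjoint representation then forces $d\tilde w$ to be surjective on a dense open subset.'' Neither step is carried out. Non-vanishing of $\partial w/\partial x_i$ in $\mathbb Z[F_d]$ only guarantees that $A_i(g)\ne 0$ generically, which is very far from $\sum_i \operatorname{Im}A_i(g)=\mathfrak g$. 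And irreducibility of $\mathfrak g$ as a $\mathcal G$-module says nothing direct about the image of $d\tilde w$ at a \emph{single} point $g$; that image is not $\mathcal G$-stable, and the family of such images as $g$ varies is a union of subspaces, not a submodule. Turning this into a proof requires a genuine new idea that you have not supplied.

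For comparison, Borel's own route avoids the differential altogether in the non-abelian step. He first proves the theorem for $\SL_2$ by a direct argument (essentially that $\operatorname{tr}\circ\tilde w$ is a non-constant regular function, using that $F_d$ embeds in $\SL_2$ over an infinite field), and then, for an arbitrary almost simple $\mathcal G$, he uses the subgroup $H=\prod_{\alpha\in\Pi}\mathcal G_\alpha$ generated by the $\SL_2$'s attached to a set of simple roots. The restriction $\tilde w|_{H^d}$ is dominant onto $H$ factor by factor, and since $H$ contains a maximal torus of $\mathcal G$, the conjugation-invariant closure of $\operatorname{Im}\tilde w$ contains all regular semisimple elements, hence is all of $\mathcal G$. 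This bypasses the delicate differential analysis you are attempting; if you want to push your approach through, the missing ingredient is essentially of the same strength as the $\SL_2$ case done by hand.
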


Recall that this means that the image of the map contains a Zariski
dense open set (i.e., for  a ``typical'' right-hand side equation
\eqref{eq:gr} is solvable).

This result has a nice consequence: if $\G$ and $w$ are as in Borel's theorem 
and $K$ is algebraically closed, 
the word width of $G=\G(K)$ is at most two, i.e., every $g\in G$ can be represented as a
product of at most two $w$-values.

\begin{remark}
Bringing Borel's theorem together with Thom's example, one
immediately convinces oneself that the panda principle formulated
above is to be refined: the answer to the question whether panda is
a typical animal in Sichuan may depend on what is meant by
``typical''. Indeed, Thom's example shows that for some word $w$ all
pandas (=unitary matrices from the image of $w$) live within an
$\varepsilon$-neighbourhood of 1, so Thom would not call them
typical. However, Borel probably would: $\varepsilon$-neighbourhood
is Zariski dense!
\end{remark}

\begin{remark}
In the spirit of negative-positive results mentioned in the previous section,
one can hope that the image of any word map on a compact group $G$ is large
provided the Lie rank of $G$ is sufficiently large. More concretely, we would like to mention
the following density statement, which can be viewed as a metric analogue of Larsen's
conjecture.

Given $\e>0$, a subset $Y$ of a metric space $X$ is called $\e$-dense if
the distance from any point $x\in X$ to $Y$ is at most $\e$. Let $G=\SU(n)$,
and let $d_{\text{\rm{rk}}}(g,h):=(\rk (g-h))/n$ denote the normalized
rank metric. J.~Schneider and A.~Thom \cite{ST} proved that given $\e>0$ and a non-trivial
word $w\in F_d$, there exists an integer $N$ depending on $\e$ and $w$ such that
the image of the word map $\w\colon \SU(n)^d\to\SU(n)$ is $\e$-dense in normalized rank metric
for all $n\ge N$.
\end{remark}

Let us now ask what happens outside Sichuan and try to extend
borders.

First note that over-optimistic attempts may fail, in the sense that
the image of a ``typical'' word map is ``not so large''.
To make this vague statement a little more precise, it is convenient to
make use of the notion of width.

\begin{definition} \label{wid}
Let $G$ be a group, and let $w\in F_d$ be a word. For any $g\in G$ define
its $w$-length $\ell_w (g)$ as the smallest $k\in \mathbb N\cup\infty$ such that $g$ can
be represented as a product of $k$ values of $\w\colon G^d\to G$.

The $w$-width of $G$ is defined by $\wid_w(G)\coloneqq \sup_{g\in G}\ell_w(g)$.
\end{definition}

With this notion in mind, one can roughly estimate how large is the image of
a word map on a group $G$ in the situation where the surjectivity or dominance
fail to hold (or are unknown to hold, or the dominance makes no sense): informally,
smaller is the $w$-width of $G$, larger is the image of $\w\colon G^d\to G$.

The first result to be mentioned here is a theorem
of A.~Myasnikov and A.~Nikolaev \cite{MyNi}: for any $w$, any
(non-elementary) hyperbolic group has infinite $w$-width. According
to A.~Ol'shanski\u\i \ \cite{Ols}, hyperbolic groups are ``generic'' within the
class of all groups, so typically a group will have infinite word
width.

Let us make a more modest attempt. Say, in Borel's theorem let us
try to replace ``algebraic group'' with ``Lie group''. Then the
assertion on word width mentioned above may break down. Indeed, let
$w=[x,y]$ be the commutator. Then another theorem of A.~Borel
prevents from far-reaching generalizations:

\begin{theorem} \cite{Bo3}
Let $G$ be a connected semisimple Lie group. Then $G$ has finite
commutator width if and only if its centre is finite.
\end{theorem}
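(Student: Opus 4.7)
The plan is to treat the two implications separately.

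For ``$Z(G)$ finite $\Rightarrow$ finite commutator width'', I would first pass to the adjoint quotient $G_{\mathrm{ad}} := G/Z(G)$, a connected centreless semisimple Lie group which, via the adjoint representation into $\GL(\mathfrak g)$, coincides with the identity component of the real points of a semisimple linear algebraic $\BR$-group. For such a group, the classical theorems of Ree and Pasiencier--Wang (cf.\ \subsecref{Thom}) together with a Cartan/Iwasawa-decomposition analysis over $\BR$ furnish a uniform bound $w_0$ on the commutator width of $G_{\mathrm{ad}}$. To lift back, note that centrality forces commutators in $G$ to project to commutators in $G_{\mathrm{ad}}$, so every $g \in G$ has the form $g = z \cdot \prod_{i=1}^{w_0}[a_i,b_i]$ with $z \in Z(G)$. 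Since $G$ is perfect, each element of $Z(G)$ is a product of finitely many commutators in $G$; finiteness of $Z(G)$ then gives a uniform bound $m$ on the commutator length of central elements, whence $\wid_{[x,y]}(G) \le w_0 + m$.

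For the converse I would argue contrapositively, using quasi-morphisms. The centre $Z(G)$ is discrete (the Lie algebra $\mathfrak g$ is centreless) and finitely generated, so if it is infinite then $Z(G) \cong \BZ^r \oplus \text{(finite)}$ with $r \ge 1$ and there is a nontrivial homomorphism $\chi \colon Z(G) \to \BR$. The decisive step is to extend $\chi$ to a homogeneous quasi-morphism $\psi \colon G \to \BR$ of finite defect $D$. Granted this, the standard estimate $\lvert \psi(\prod_{i=1}^{k}[a_i,b_i]) \rvert \le kD$ combined with the unboundedness of $\psi|_{Z(G)} = \chi$ rules out any uniform bound on the number of commutators needed to represent central elements, contradicting finite commutator width.

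The principal obstacle is the construction of such a $\psi$. Here I would appeal to structure theory: if $Z(G)$ is infinite, then the universal cover $\widetilde G$ has a simple factor of Hermitian type---these being precisely the non-compact simple Lie groups with infinite fundamental group, such as $\widetilde{\SL(2,\BR)}$ or $\widetilde{\Sp(2n,\BR)}$---and on each such factor the Rademacher/Maslov-index cocycle, equivalently the bounded Kähler class in $H^{2}_{b}(G;\BR)$, provides an explicit homogeneous quasi-morphism whose restriction to the central $\BZ$ is a nontrivial homomorphism. A suitable $\BR$-linear combination of these factor-wise quasi-morphisms matches the prescribed $\chi$ on $Z(G)$ and yields the required $\psi$; verifying that the image of the restriction map from homogeneous quasi-morphisms of $G$ to $\mathrm{Hom}(Z(G),\BR)$ is large enough to contain $\chi$ is the most delicate point of the whole argument.
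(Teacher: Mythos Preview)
The paper does not contain a proof of this theorem: it is stated with a citation to Borel \cite{Bo3} and used only to illustrate that the naive passage from algebraic groups to Lie groups can fail. There is therefore nothing in the paper to compare your argument against.

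That said, your sketch is broadly sound and in line with how the result is usually established. For the forward implication, the reduction to the adjoint (hence linear) group is correct, but note that invoking Pasiencier--Wang and Ree gives you surjectivity of the commutator map only over $\BC$; over $\BR$ the uniform bound on commutator width of a linear semisimple group requires more work (this is in fact one of the substantial points of \cite{Bo3}), so your phrase ``together with a Cartan/Iwasawa-decomposition analysis'' is hiding real content. The lifting step via finiteness of $Z(G)$ is fine.

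For the converse, your quasi-morphism strategy is exactly the Milnor/Barge--Ghys approach alluded to in the paper (see the sentence following the theorem and Remark~\ref{rem:dif}). The structural input --- that an infinite centre forces a simple factor of Hermitian type, on which the Guichardet--Wigner/Maslov quasi-morphism lives --- is correct. The point you flag as ``most delicate'' genuinely is: one must check that the homogeneous quasi-morphism on the universal cover $\widetilde G$ vanishes on the kernel of $\widetilde G \to G$ (or otherwise descends), so that its restriction to $Z(G)$ is still an unbounded homomorphism. This is doable by choosing the linear combination of factor quasi-morphisms appropriately, but it is not automatic and deserves an explicit argument rather than a parenthetical.
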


In particular, the universal cover $\widetilde{\SL (2,\mathbb R)}$
of $\SL (2,\mathbb R)$ has infinite commutator width (this
observation is attributed to J.~Milnor, cited from \cite{Wo}).

\begin{remark} \label{rem:dif}
Let us make another attempt, insisting on the {\it simplicity} of
$G$. There are simple groups $G$ of infinite commutator width
(J.~Barge and E.~Ghys \cite{BG} (infinitely generated), A.~Muranov
\cite{Mu} (finitely generated), P.-E.~Caprace and K.~Fujiwara
\cite{CF} (finitely presented), E.~Fink and A.~Thom \cite{FT}
(with finite palyndromic width). There are also examples of groups
$G$ for which $\wid_w(G)\in \mathbb N$ can be made arbitrarily
large by varying $w$ (see \cite{Mu} and Section \ref{sec:Waring} below).
In the latter case such examples can be obtained from Theorem
\ref{thom-hls}(i). It is interesting whether such an example exists
among simple compact algebraic groups over a {\it non-archimedean} local field.
A general result of A.~Jaikin-Zapirain \cite{JZ} indicates that in such
groups the $w$-width is finite for any non-trivial $w$ but does not
say whether it can be arbitrarily large. In this connection, see Question
\ref{q:local}.

Geometric ideas of \cite{BG} were
further developed to produce more examples of similar flavour, see,
e.g., \cite{GaGh}. However, there are also several classes of simple
groups naturally appearing in topological context (see, e.g.,
\cite{Ts2}) where every element is a commutator. It would be
interesting to pursue investigation of more general word maps on
such groups, especially in view of their relationship with deep
geometric properties of groups under consideration. We refer the interested reader
to \cite{BIP}, \cite{Ts1}, \cite{CZ}, \cite{LaTe} and the references therein.
\end{remark}

\begin{remark}
A little more successful attempt concerns a generalization of Borel's dominance theorem
from {\it semisimple} to {\it perfect} linear algebraic groups \cite{GKP3}. Recall that a group
is said perfect if it coincides with its commutator subgroup. Let $K=\mathbb C$
(or, more generally, any algebraically closed field of characteristic zero).
Let $\g$ be a perfect $K$-group, and let $G=\g(K)$. We identify $G$ with $\g$.
Denote by $U$ the unipotent
radical of $G$, then $G/U$ is a semisimple algebraic $K$-group \cite[11.21]{Bo2}.
By Mostow's Theorem \cite{Mo} (see, e.g., \cite[Th.~VIII.4.3]{Ho},
\cite[Prop.~5.4.1]{Co} for modern exposition), there exists a closed linear algebraic
subgroup $H$ of $G$ (called a Levi subgroup) isomorphic to $G/U$. (Equivalently, $G=HU$ is a semidirect product.) All Levi subgroups
are conjugate. We fix one of them and denote by $H$ throughout below.

Let
$$U_1 = U,    \, U_2 = [U, U_1], \dots , U_i = [U,  U_{i-1}],\dots ,U_{r+1} =\{1\}
$$
be the lower central series of $U$, and let
$V_i = U_i/U_{i+1}$ denote its quotients.
Then we may view $V_i$ as a $K[H]$-module (indeed, the action of $H$ on $V_i$
induced by conjugation of $U$ by elements of $G$ is $K$-linear because $\ch \,K = 0$).

We say that a $K[H]$-module $M$ is {\em augmentative} if it has no $K[H]$-quotients $M/M^\prime$ on which
$H$ acts trivially. If $G$ is a perfect group, $V_1$ is an augmentative $K[H]$-module \cite{GoSa}, \cite{Go3}.

We say that $G$ is a {\em firm} perfect group if $V_i$ is an augmentative $K[H]$-module for every $i$.
(If the nilpotency class of $U$ is equal to one, that is, if $U$ is an abelian group, then any perfect group $G$ is firm.)

We say that $G$ is a {\it strictly firm} perfect group if for every $i$ the space $V_i$ has no nonzero $T$-invariant vectors
(here $T$ denotes a maximal torus of $G$).

Then we have the following analogue of Borel's theorem \cite{GKP3}:

\begin{itemize}
\item[(i)] If $G$ is strictly firm, then for any non-trivial $w \in F_d$  the map
$\w\colon G^d\rightarrow G$ is dominant.
\item[(ii)] If $G$ is firm, then for any $w = w_1(x_1, \dots, x_n)w_2(y_1, \dots, y_k)\in F_{n+k}$, $w_1, w_2 \ne 1$,
the map $\w\colon G^{n+k}\rightarrow G$ is dominant.
\end{itemize}

\end{remark}

It would be interesting to treat the case of perfect groups up to the end.

\begin{quest}
Do there exist a connected perfect $K$-group $\g$ and a non-identity word
$w\in F_d$ such that the word map $w\colon (\g (K))^d\to \g(K)$ is not dominant?
\end{quest}

\begin{remark} \label{rem:Cr}
In a similar spirit of extending borders, one can turn to the
Cremona group $G_0=\Cr (2,K)$ (the group of birational automorphisms
of the projective plane $\mathbb P_K^2$), where $K$ is an algebraically closed
field (say, $K=\BC$). In many respects, $G_0$ is similar to simple
linear algebraic groups (cf. Serre \cite{Ser1}, \cite{Ser2}).
It is also a
good candidate for studying word maps for the following reason.
Although it is not simple as an abstract group (\cite{CL} for
$K=\mathbb C$, \cite{Lo} for an arbitrary $K$), it is simple as a
topological group with respect to several natural topologies: Blanc
\cite{Bl} showed this for the Zariski-like topology introduced by
Serre \cite{Ser2}, and Blanc and Zimmermann \cite{BlZi} treated the
case of a local field $K$ and Euclidean topology (introduced in
\cite{BlFur}). Since in the latter case $G_0$ may not be even perfect
(see \cite{Zi} for the case $K=\mathbb R$), to be on the safer side,
we put $G \coloneqq [G_0,G_0]$.
\end{remark}

The following natural questions arise.

\begin{question} \label{q1:Cr}
Let $w\in F_d$ be a non-identity word.
\begin{itemize}
\item[(i)] Is the map $\w\colon G^d\to G$ dominant in the Zariski
topology?
\item[(ii)] Let $K$ be a local field. Is the map $\w\colon G^d\to G$ dominant in the
Euclidean topology?
\end{itemize}
\end{question}

As to the surjectivity problem, one cannot be over-optimistic in
view of the case of power words. Say, if $K$ is finite, the orders
of elements of $G$ are bounded (see \cite{Ser1} for details), and
thus there are non-surjective power maps. Moreover, this observation
extends to the case where $K$ is algebraically closed: in this case
$G$ contains elements $g$ that are not infinitely divisible (such
are all elements of infinite order not conjugate to elements of $\GL
(2,K)$), and hence there are power maps whose image does not contain
$g$; see \cite{MO1} for details (due to J.~Blanc).

However, for non-power words the following question is meaningful.

\begin{question} \label{q2:Cr}
Let $w\in F_d$ be a non-power word. Can the map $\w\colon G^d\to G$
be non-surjective?
\end{question}

The authors would not be too much surprised if the spectacular results
cited above could help, on the one hand, in finding a non-trivial
word $w$ inducing a non-surjective map, and, on the other hand, in
proving theorems of Borel flavour.

More generally, one can ask the following question.

\begin{question}
Do there exist a locally compact topological group $G$, simple at least as a
topological group, and a word $w = w(x_1, \dots ,x_d)$ non-representable as a proper power of
another word, such that the corresponding word map $\w\colon G^d \to  G$  is not surjective but the
image of $\w$ is dense?
\end{question}

\section{Fine structure of the image of a word map}

In this section we consider the situation where the surjectivity of the word map
$\w\colon G^d\to G$ is not known, and we are looking for subtler features
of the image of $\w$. In particular, we search for
elements of certain type: semisimple (desirable in abundance) or unipotent. These cases are totally
different and require different methods.

We start with the case of groups of  Lie rank 1, which is in fact crucial for what follows.

\subsection{Search for semisimple elements in groups of Lie rank 1}
Let $H = \SL_2(L)$ where $L\subset K$ is an infinite subfield of an algebraically closed field $K$.
Since $\SL_2$ is a connected reductive group, $H$ is dense
in $G = \SL_2(K)$. Thus, $\w(H^d)$ is dense in $\w(G^d)$.

Then, according to \cite{BaZa} (see also \cite{GKP3}), the set $\w(H^d)$ contains an infinite
set of representatives of different semisimple conjugacy classes of $G$.
The latter fact has been proved (by a different method) and used in \cite{HLS}.
Also in \cite{HLS} it has been proved that the set $\w(H^d)$
contains  an infinite set of representatives of different split semisimple conjugacy classes of $G$ if $\R \subset  L$ or $\Q_p\subset L$.
Here we give a generalization of this result.

First of all let us define a class of fields we will consider.
\begin{definition}
A field is called {\em quadratically meagre} if it admits only finitely many different quadratic extensions.
\end{definition}

Note that both $\R$ and $\Q_p$ are quadratically meagre fields. In the case where
the ground field is $\R$, Proposition \ref{prop:PSL2R} guarantees that all
{\it split} semisimple elements of $\PSL_2(\R)$ belong to the image of every non-trivial word map. It is natural to try to generalize this fact to other ground fields.

\begin{remark}
Let $F$ be a quadratically meagre field of characteristic zero. Then there is a finite set of primes $S_F^\prime= \{p_1, \dots, p_r\}$
such that if $p\notin S_F^\prime$, then $\sqrt{p}\in F$.
\end{remark}

Let $p_\infty$ denote the archimedean place of $\Q$, and define $S_F= S_F^\prime\cup \{p_\infty\}$.


\begin{theorem}
\label{th597}
Let $L$ be a field of characteristic zero which contains a  quadratically meagre subfield. Further, let $G = \SL_2(L)$, and let $\w\colon G^d\rightarrow G$
be the word map induced by a non-trivial word $w\in F_d$.
Then $\w(G^d)$ contains an infinite set of representatives of different split semisimple conjugacy classes of $G$.
\end{theorem}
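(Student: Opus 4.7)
The plan is to parallel the proof of Proposition~\ref{prop:PSL2R}, replacing the intermediate value theorem for $\R$ by a local analysis adapted to the quadratically meagre subfield $F\subseteq L$. An element $g\in\SL_2(L)$ with $g\ne\pm 1$ of trace $t$ is split semisimple if and only if $t^2-4$ is a nonzero square in $L$, and distinct traces yield distinct regular semisimple conjugacy classes; hence it suffices to exhibit infinitely many distinct values $t=\Phi(\alpha,\beta)$ with $\alpha,\beta\in L$ and $t^2-4\in L^{*2}$, where $\Phi\in L[x,y]$ is supplied by Lemma~\ref{lem972}.

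First I would invoke Lemma~\ref{lem972} (applicable because $L$ is infinite, having characteristic zero) to secure a non-constant $\Phi(x,y)\in L[x,y]$ with $\Phi(0,0)=2$ whose values lie in $\Imm(\tr\circ\w)$. Writing $\Phi(x,y)-2=P_d(x,y)+\text{(higher order)}$ with $P_d\ne 0$ homogeneous of degree $d\geq 1$, I pick $(\alpha,\beta)\in L^2$ with $P_d(\alpha,\beta)\ne 0$ (possible since $L$ is infinite) and restrict to the line $(x,y)=(\alpha s,\beta s)$. This yields a non-constant univariate polynomial
$$\phi(s):=\Phi(\alpha s,\beta s)=2+s^d h(s)\in L[s],\qquad h(0)=P_d(\alpha,\beta)\in L^*,$$
hence
$$\phi(s)^2-4=s^d h(s)\bigl(4+s^d h(s)\bigr).$$
It then suffices to show that $S:=\{s\in L:\phi(s)^2-4\in L^{*2}\}$ is infinite, since $\phi$ non-constant forces infinitely many distinct values of $\phi$ on any infinite $S$.

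To establish infinitude of $S$, I would exploit the finiteness of $F^*/F^{*2}$ together with the local structure on $F$ implicit in the Remark preceding the theorem (the finite set $S_F$): (a) for $s\in F^*$ ``sufficiently close to $0$'' in the natural valuation or ordering on $F$, the factor $4+s^d h(s)$ lies near the square $4$ and hence itself belongs to $L^{*2}$; (b) by pigeonholing on the finite group $F^*/F^{*2}$, and if needed by rescaling $(\alpha,\beta)\mapsto(\lambda\alpha,\lambda\beta)$ (to absorb odd powers of $s$ when $d$ is odd) or by restricting $s$ to an appropriate coset of $F^{*2}$ (when $d$ is even), the class of $s^d h(s)$ in $L^*/L^{*2}$ can be arranged to be trivial on an infinite subset of $F^*$. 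Combining (a) and (b) places $\phi(s)^2-4$ in $L^{*2}$ for infinitely many $s\in F$, completing the proof. The principal obstacle is step (a): formulating a uniform ``close-to-a-square implies a square'' principle for an arbitrary quadratically meagre subfield of characteristic zero. For the model cases $F\supseteq\R$ and $F\supseteq\Q_p$ (handled in~\cite{HLS}) this is the intermediate value theorem and Hensel's lemma respectively; for general $F$ one expects a structural reduction to these completions using the content of the preceding Remark.
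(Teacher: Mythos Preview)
Your proposal has the right overall shape but contains a genuine gap that you yourself flag: step (a), the ``close-to-a-square implies a square'' principle, simply does not exist for an abstract quadratically meagre field $F$. There is no intrinsic valuation or ordering on such an $F$, and even if there were, your factor $4+s^dh(s)$ lies in $L$ (since $h\in L[s]$), so proximity to $4$ inside $F$ tells you nothing about square classes in $L$. Step (b) has the same defect: $h(s)$ has coefficients in $L$, so its square class in $L^*/L^{*2}$ is not governed by the finite group $F^*/F^{*2}$, and the pigeonhole/rescaling manoeuvre does not land.

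The missing idea, which the paper supplies, is to apply Lemma~\ref{lem972} not over $L$ but over $\Q$. Since $\SL_2(\Q)\subset\SL_2(L)$, the lemma yields a non-constant $\Phi\in\Q[x,y]$ with $\Phi(0,0)=2$ whose values at rational points already lie in $\Imm(\tr\circ\w)$. Now $f(q)^2-4$ is a \emph{rational} number, and the quadratically meagre hypothesis is used in the arithmetic form recorded in the Remark: there is a finite set $S_L=S_L'\cup\{p_\infty\}$ of places such that $\sqrt{p}\in L$ for every prime $p\notin S_L'$. Thus $\sqrt{f(q)^2-4}\in L$ as soon as the squarefree part of $f(q)^2-4$ avoids the primes in $S_L'$ and $f(q)^2-4>0$; equivalently, as soon as $f(q)^2-4\in\Q_p^{*2}$ for every $p\in S_L$. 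The paper then manufactures infinitely many such $q\in\Q$ by a local argument: writing $f(q)^2-4$ homogeneously as $\Psi(x,y,z)/z^{2d}$ with $\Psi(1,1,0)=0$ and $(\partial\Psi/\partial x)(1,1,0)\ne 0$, the implicit function theorem in each $\Q_p$ (including $p=\infty$) and weak approximation produce an infinite set $\V\subset\Q$ on which $f(q)^2-4$ is a square in every $\Q_p$, $p\in S_L$, simultaneously. This is exactly the ``structural reduction to completions'' you anticipated; the point is that it is carried out over $\Q$, not over $F$, and the quadratically meagre property enters only at the final step to pass from local squares at $S_L$ to a square in $L$.
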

\begin{remark}
It is a well-known fact that the conjugacy class of a  split semisimple element is $\SL_2(F)$ is uniquely determined by the value of the trace.
\end{remark}
\begin{proof}
Partially we follow the ideas of the proof of Lemma 3.2(ii) of \cite{HLS}.

By Lemma \ref{lem972}, we have a non-constant polynomial
$\Phi(x, y)\in \Q[x, y]$ such that $\Phi(0,0) = 2$ and $\Phi(\alpha, \beta) \in \Imm\, \tr\circ\w$ for every  $\alpha, \beta \in \Q$. Then we can  find a rational number $\beta$ such that $f(x)\coloneqq \Phi(x, \beta)$ is a non-constant polynomial and $f(\alpha) \in \Imm\, \tr\circ\w$ for every $\alpha \in \Q$.

Put
$$\X_L \coloneqq \{r = f(q) \,\,\mid\,\,\,q\in \Q, \sqrt{ f(q)^2 - 4}\in L\}.$$

\begin{lemma}
\label{lem573}
Suppose that $\X_L$ is an infinite set. Then the statement of Theorem \ref{th597} holds.
\end{lemma}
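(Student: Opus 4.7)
The plan is to translate the definition of $\X_L$ directly into a statement about split semisimple conjugacy classes of $G = \SL_2(L)$, using the fact that in $\SL_2$ over any field the trace already encodes both the split/anisotropic dichotomy and the full conjugacy invariant for semisimple elements.

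First I would record the following standard criterion. An element $g \in \SL_2(L)$ with $\tr(g) = r$ has characteristic polynomial $\lambda^2 - r\lambda + 1$ of discriminant $r^2 - 4$; hence for $r \neq \pm 2$ the element $g$ is automatically semisimple with two distinct eigenvalues $(r \pm \sqrt{r^2-4})/2$, and it is \emph{split} over $L$ precisely when $\sqrt{r^2-4} \in L$. In that case $g$ is $G$-conjugate to $\mathrm{diag}(\alpha, \alpha^{-1})$ with $\alpha + \alpha^{-1} = r$; the conjugating matrix in $\GL_2(L)$ can be rescaled into $\SL_2(L)$ by an element of the diagonal torus (which commutes with the target). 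Consequently the $G$-conjugacy class of a split semisimple element is determined by its trace alone, and distinct traces yield distinct classes.

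Next I would invoke the construction immediately preceding the lemma: by the defining property of $f$, for every $q \in \Q$ there exists $g_q \in \w(G^d)$ with $\tr(g_q) = f(q)$. If $r = f(q) \in \X_L$ and $r \neq \pm 2$, then $\sqrt{r^2 - 4} \in L$ by the very definition of $\X_L$, so by the criterion above $g_q$ is a split semisimple element of $G$. The assignment $r \mapsto [g_q]$ therefore gives a well-defined, injective map from $\X_L \setminus \{\pm 2\}$ into the set of split semisimple $G$-conjugacy classes meeting $\w(G^d)$.

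Finally, since $\X_L$ is infinite by hypothesis and $\{\pm 2\}$ removes at most two points, the target set is infinite, giving the claim. I do not foresee any serious obstacle here: the lemma is essentially a change of language made available by Lemma \ref{lem972}, and the only small points requiring care are the harmless exclusion of the values $\pm 2$ (which might correspond to unipotent or central elements) and the determinant-one refinement in the conjugacy statement. The substantive content — actually producing infinitely many elements of $\X_L$ — is deferred to the subsequent argument exploiting the quadratic meagreness hypothesis on a subfield of $L$.
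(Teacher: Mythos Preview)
Your proof is correct and follows essentially the same approach as the paper's: the paper also reduces to the facts that for $r = f(q) \ne \pm 2$ the condition $\sqrt{r^2-4}\in L$ characterizes split semisimplicity, and that distinct traces force distinct conjugacy classes. Your write-up merely spells out the standard eigenvalue and conjugacy details more explicitly than the paper does.
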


\begin{proof}
Let $q\in \Q$. Then $f(q) = \tr g$ for some element $g\in \Imm\,w$. We may assume $f(q) \ne \pm 2$.
Then $g$ is a split semisimple element in $\SL_2(L)$ if and only if
$\sqrt{\tr(g)^2 - 4}\in L$.
Moreover, if $\tr (g_1) \ne  \tr (g_2)$ for $g_1, g_2 \in \SL_2(L)$, then $g_1, g_2$ are in different conjugacy classes of $\SL_2(L)$.
Thus, if the set $\X_L$ is infinite, there are infinitely many elements of $\Imm\, w$ which are split semisimple elements belonging to different conjugacy classes of $\SL_2(L)$.
\end{proof}

Obviously, we may assume that $L$ itself is a quadratically meagre field. Let $S$ be a finite set of primes containing $p_\infty$.

\begin{lemma}
\label{lem575}
There exists an infinite set $\V\subset \Q$ such that for every $p \in S$ and every $q\in \V$ we have
$$\sqrt{f(q)^2 - 4}\in \Q_p.$$
\end{lemma}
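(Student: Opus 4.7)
The plan is to produce, for every prime $p\in S$, a non-empty open subset $U_p\subset\Q_p$ on which $f(q)^2-4$ is a non-zero square in $\Q_p$, and then to invoke weak approximation of $\Q$ in $\prod_{p\in S}\Q_p$ to exhibit infinitely many rational numbers lying in every $U_p$ simultaneously. Those rationals will form the required set $\V$.

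To construct the $U_p$, write $f(x)=a_m x^m+(\text{lower order})$ with $a_m\ne 0$ and $m\geq 1$. At the archimedean place, non-constancy of $f$ gives $|f(q)|\to\infty$ as $|q|\to\infty$, so $U_\infty\coloneqq \{q\in\R : |f(q)|>2\}$ is open, non-empty, and on it $f(q)^2-4>0$ is a real square. For a finite $p\in S$, I claim that $U_p\coloneqq\{q\in\Q_p : v_p(q)\le -N\}$ works provided $N$ is large enough. Indeed, for such $q$ the leading monomial $a_m q^m$ dominates $f(q)$ in $p$-adic norm, and a direct expansion yields
$$f(q)^2-4=(a_m q^m)^2\bigl(1+\eta(q)\bigr)$$
with $v_p(\eta(q))\to+\infty$ as $N\to+\infty$. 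Once $v_p(\eta(q))\geq 1$ (for odd $p$) or $\geq 3$ (for $p=2$), Hensel's lemma forces $1+\eta(q)\in(\Z_p^\ast)^2$; multiplying by the obvious square $(a_m q^m)^2$ then gives $f(q)^2-4\in(\Q_p^\ast)^2$ for every $q\in U_p$.

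With the local open sets in hand, I finish by appealing to weak approximation. The diagonal image of $\Q$ is dense in $\prod_{p\in S}\Q_p$, so the non-empty open product $\prod_{p\in S}U_p$ meets $\Q$ in a dense, hence infinite, subset, which I take to be $\V$. By construction, for each $q\in\V$ and each $p\in S$ the element $f(q)^2-4$ is a non-zero square in $\Q_p$, so $\sqrt{f(q)^2-4}\in\Q_p$, as required. The only delicate point is the finite-place estimate: one has to keep careful track of $v_p$ applied to the non-leading coefficients of $f$, and to handle the prime $2$ separately because of its weaker Hensel threshold. Both issues are routine once $N$ is chosen large enough, so no serious obstacle is anticipated.
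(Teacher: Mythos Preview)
Your argument is correct and follows the same overall architecture as the paper: exhibit, at each place $p\in S$, a non-empty open set of $q$'s on which $f(q)^2-4$ is a square in $\Q_p$, then invoke weak approximation for $\Q$ in $\prod_{p\in S}\Q_p$ to produce infinitely many rational $q$'s that work at all places simultaneously. Both proofs ultimately exploit the same phenomenon, namely that for $q$ of large local absolute value one has $f(q)^2-4\approx (a_m q^m)^2$.

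The difference lies in how this local fact is established. The paper homogenizes: writing $t=y/z$, it observes that $z^{2m}(f(y/z)^2-4)=c_0^2 y^{2m}-z\phi(y,z)$ for a suitable polynomial $\phi$, introduces the auxiliary surface $\Psi(x,y,z)=c_0^2x^{2m}-c_0^2y^{2m}+z\phi(y,z)=0$, and applies the $p$-adic implicit function theorem at the smooth point $(1,1,0)$ to parametrize nearby solutions $x=\theta_p(y,z)$; this gives $f(q_y/q_z)^2-4=(c_0\theta_p(q_y,q_z)^m/q_z^m)^2$ for $(q_y,q_z)$ near $(1,0)$. You instead factor directly as $f(q)^2-4=(a_mq^m)^2(1+\eta(q))$ and push $v_p(\eta(q))$ above the Hensel threshold by taking $v_p(q)$ sufficiently negative. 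Your route is more elementary and avoids the implicit function theorem entirely; the paper's route is slightly more geometric and packages the approximation neatly via neighbourhoods of the rational point $(1,1,0)$. Either way the set $\V$ one obtains consists of rationals that are simultaneously large at every place of $S$, and the conclusion is the same.
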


\begin{proof}
Let $\Psi (x, y, z) = cx^r - cy^s + z\phi (y, z)\in \Q[x, y, z]$ where $\bf c \ne 0$. For every prime $p \in S$ the equation $\Psi (x,y, z) = 0$
defines a surface $X_{\Q_p}$  in the affine space ${\mathbb A}_{\Q_p}^3$. Since for $a = (1, 1, 0)$ we have $\Psi (a) = 0$ and $(\frac{\partial\Psi}{\partial x})_{a}\ne 0$,
by the implicit function theorem
there exist a neighbourhood of $a$ in
${\mathbb A}^3_{\Q_p}$
$$U_{p,a} = U_{p,1}^x \times U_{p, 1}^y\times  U_{p, 0}^z$$
where
$$U_{p,1}^x = \{\alpha\in \Q_p \,\mid\,\,\|\alpha -1\|_p < \varepsilon\},$$
$$U_{p,1}^y = \{\beta\in \Q_p\,\,\mid\,\,\|\beta -1\|_p < \varepsilon\},$$
$$U_{p,0}^z = \{\gamma\in \Q_p,\,\mid\,\|\gamma\|_p < \varepsilon\},$$
$\varepsilon \in  \R_{>0}$, and a smooth continuous function with respect to the topology induced by the natural topology on $\Q_p$
$$\theta_p\colon  U_{p,1}^y\times  U_{p, 0}^z\rightarrow U_{p,1}^x$$
such that

\begin{equation}
\label{equa11}
(\theta_p((\beta, \gamma)), \beta, \gamma) \in X_{\Q_p}\,\,\,\text{for every}\,\,\, \beta \in U_{p,1}^y,\,\gamma \in U_{p, 0}^z.
\end{equation}

Put
$$U^y_{S, 1} = \prod_{p\in S} U_{p,1}^y,\,\, U^z_{S, 0} = \prod_{p\in S} U_{p,0}^z.$$
The sets $U^y_{S, 1}, U^z_{S, 0}$ are  neighbourhoods of $1$  and $0$ in $\prod_{p\in S}\Q_p$, respectively.
Since the subset $\Q\subset \prod_{p\in S}\Q_p$ is dense in $  \prod_{p\in S}\Q_p$ by the weak approximation theorem,
the sets $\Q\cap U^y_{S, 1}, \Q\cap U^z_{S, 0}$ are infinite. Moreover, the set
$$\V \coloneqq \{q \in \Q\,\,\,\mid\,\,\, q = \frac{q_y}{q_z},q_y\in \Q\cap U^y_{S, 1}, 0\ne  q_z\in\Q\cap U^z_{S, 0}\}$$
is infinite (indeed, for every $p\in S$ the value $ \|q_y\|_p$ is bounded and the value $\|q_z\|_p$ can be made smaller than
any positive $\varepsilon \in \R$).

Now let $ f (t) = c_0t^d + c_1t^{d-1} + \cdots + c_d$ (here we change the variable $x$ to $t$). Put $t = y/z$. Then
$$  \frac{c_0^2 x^{2d}}{z^{2d}} -  (f(y/z)^2 - 4)  = \frac{c_0^2 x^{2d} - c_0^2y^{2d} +  z\phi(y, z)}{z^{2d}}$$
for some $\phi(y, z)\in \Q[y, z]$. Take $\Psi (x, y, z) = c_0^2 x^{2d} - c_0^2y^{2d} +  z\phi(y, z).$

For every $p \in S$ we obtain from (\ref{equa11})  that for every $q_y\in \Q\cap U^y_{S, 1}$, $q_z \in \Q^*\cap U^z_{S, 0}$
we have
\begin{equation}
\label{equa17}
\underbrace{\frac{c_0^2 q_y^{2d}  - q_z \phi(q_y, q_z)}{q_z^{2d}}}_{f(q_y/q_z)^2 -4 } = \frac{c_0^2\theta_p(q_y, q_z)^{2d}}{q_z^{2d}}\in \Q_p^{*2}.
\end{equation}

Thus from (\ref{equa17}) and the definition of $\V$ we obtain the statement of the lemma.
\end{proof}

Now we can prove Theorem \ref{th597}. Put $S: = S_L$ and
$$\X_L^\prime  = \{r =  f (q) \,\,\mid\,\, q \in \V\}.$$
Then $\X_L^\prime$ is an infinite  set of positive rational numbers (by Lemma \ref{lem575}).

\begin{lemma}
\label{lem576}
$\X_L^\prime \subset \X_L.$
\end{lemma}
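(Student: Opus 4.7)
The plan is to transfer the local information supplied by Lemma~\ref{lem575} (that $\sqrt{f(q)^2-4}\in\Q_p$ for every $p\in S_L$) into the global statement $\sqrt{f(q)^2-4}\in L$. Set $a\coloneqq f(q)^2-4\in\Q$; after clearing denominators (which only changes $a$ by multiplication by a rational square, and therefore does not affect membership of $\sqrt{a}$ in $L$ or in any $\Q_p$), I may assume $a\in\Z$. Since $p_\infty\in S_L$ gives $\sqrt{a}\in\R$, one has $a\ge 0$; the case $a=0$ is immediate, so I assume $a>0$ and write $a=u\,m^2$ with $m\in\Z$ and $u\in\Z$ a positive squarefree integer. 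It therefore suffices to prove that $\sqrt{u}\in L$.

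The crux of the argument is a valuation observation. For every finite prime $p\in S_L^\prime$, the hypothesis $\sqrt{a}\in\Q_p$ means that $u\in\Q_p^{*2}$, so $v_p(u)\in 2\Z$; but squarefreeness of $u$ forces $v_p(u)\in\{0,1\}$, leaving only $v_p(u)=0$, i.e.\ $p\nmid u$. Hence every prime divisor of $u$ lies outside $S_L^\prime$, and by the remark preceding Theorem~\ref{th597} each such prime satisfies $\sqrt{p}\in L$. Taking the product over the primes dividing $u$ yields $\sqrt{u}=\prod_{p\mid u}\sqrt{p}\in L$, and consequently $\sqrt{a}=m\sqrt{u}\in L$, which is exactly what is needed.

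The only conceptually nontrivial step is the valuation argument which, combined with squarefreeness of $u$, completely eliminates the potentially problematic primes of $S_L^\prime$ from the squarefree part of $a$; after that the conclusion is immediate from the statement that $L$ contains $\sqrt{p}$ for all primes $p\notin S_L^\prime$. I expect no real obstacle beyond the reduction to integral $a$ and the sign check at the archimedean place; both are routine.
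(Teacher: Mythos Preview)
Your proof is correct and follows essentially the same route as the paper's: reduce to the squarefree part of (an integer multiple of) $f(q)^2-4$, use the archimedean place to get positivity, use the local square condition at each finite $p\in S_L^\prime$ together with squarefreeness to conclude that no such $p$ divides the squarefree part, and then invoke the defining property of $S_L^\prime$ to get $\sqrt{u}\in L$. Your write-up is in fact slightly more explicit than the paper's at the valuation step (the paper simply asserts that no $p\in S$ appears in the prime decomposition of the squarefree part), and your separate handling of the degenerate case $a=0$ is a small improvement in precision.
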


\begin{proof}
Let $r\in X_L^\prime$, and let  $s:= r^2 -4 = c/d $ with $(c, d)=1$. Since $p_\infty \in S_L = S$, we have $\sqrt{s}\in Q_{p_\infty} = \R$, and therefore $s >  0$.
Denote  $$\bar{s} \coloneqq \text{the squarefree part of the integer}\,\,\, sd^2 = cd.$$  Then
$\Q(\sqrt{s}) = \Q(\sqrt{\bar{s}})$
and
$\Q_p(\sqrt{s}) = \Q_p(\sqrt{\bar{s}})$
for every $p \in S$. Since $s$ is a square in $\Q_p$ (Lemma \ref{lem575}), we have no $p$ from $S$
in the decomposition $\bar{s}= p_1p_2\cdots p_r$. Hence $\sqrt{\bar{s}}\in L$  according to the definition of $S = S_L$. Thus we have the inclusion $\X_L^\prime \subset \X_L.$
\end{proof}
Now the statement of the theorem follows from Lemmas \ref{lem573}, \ref{lem575}, \ref{lem576}.
\end{proof}

\begin{remark}
Probably, with appropriate changes Theorem \ref{th597} can be extended to the case $\ch \,L = p >0$.
\end{remark}



\subsection{Search for unipotent elements in groups of Lie rank 1}
Surprisingly enough, the situation here is much more complicated even in the
case $G=\SL_2(\C)$ (see Question \ref{q:sur}(i)). In fact, since all unipotent
elements of $G$ are conjugate, to guarantee that
all unipotent elements belong to the image of the word map $\w\colon G^d\rightarrow G$, it is enough to
prove this for a {\it single} element $u=\left(\begin{matrix} 1 & 1\\ 0 & 1
\end{matrix}\right)$. However, as for now, this is known only for certain families
of word maps. The main approaches used so far are based on
\begin{itemize}
\item[(i)] the Magnus embedding (see \cite{BaZa});
\item[(ii)] the representation varieties of the one-relator groups $F_2/\left<w\right>$
(see \cite{GKP1}--\cite{GKP2}).
\end{itemize}
We do not present any details here referring the reader to the papers cited above
and limiting ourselves to sketching the main ideas.

The first approach relies on the following (clever modification of the)
construction of Magnus (see \cite{Mag} and \cite{We}). First,
to each generator $x_i$ of $F_d$
one can associate an upper-triangular matrix with determinant one
$$
\left(\begin{matrix} t_i & s_i \\ 0 & t_i^{-1}\end{matrix}\right)
$$
over the ring $R_d=\Z[t_1,t_1^{-1},\dots ,t_d,t_d^{-1}, s_1,\dots ,s_d]$.
In the papers cited above it is shown that this correspondence extends to an embedding
$F_d/F_d^{(2)}$ into the group $B(R_d)$ of unimodular upper-triangular matrices over $R_d$
(here $F_d^{(2)}$ denotes
the second derived subgroup of $F_d$). For a field $K$ of transcendence degree at least $2d$ over $\Q$
this gives an embedding of $F_d/F_d^{(2)}$ into $B(K)$, the group of unimodular upper-triangular matrices over $K$. 
Let $w\in F_d^{(1)}\setminus F_d^{(2)}$. Then the word map $\w\colon B(\C)^d\rightarrow U(\C)\coloneqq [B(\C), B(C)]$ is surjective. 
For every $L\leq \C$ the subgroup $B(L)$ is dense in $B(\C)$. Hence there is a non-trivial element in $\w(B(L))$. Thus, we 
have a unipotent element in $\w(\SL_2(L))$. The existence of a unipotent element for words $w\in F_d \setminus F_1$ is obvious 
(it is enough to restrict $\w$ to $U(L)$). Hence we have the following theorem, due to Bandman and Zarhin. 

\begin{theorem} \cite{BaZa} 
Let  $L$  be a field of characteristic zero, and let $w \in F_n\setminus F_n^2$.
Further, let $G = \SL_2(L)$, and let $\w\colon G^n\rightarrow G$ be the corresponding word map.
Then the set $\Imm\, \w$ contains a non-trivial unipotent element.
\end{theorem}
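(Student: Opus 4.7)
The plan is to split on whether $w$ lies in the commutator subgroup $F_n^{(1)}$ and, for $w \in F_n^{(1)} \setminus F_n^{(2)}$, to use the Magnus embedding to reduce the problem to the existence of a suitable nonzero polynomial, as sketched in the paragraphs immediately preceding the theorem.

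First I would dispose of the easy sub-case $w \in F_n \setminus F_n^{(1)}$ by restricting $\widetilde{w}$ to the abelian subgroup of unipotent upper-triangular matrices
$U(L) = \bigl\{\bigl(\begin{smallmatrix} 1 & a \\ 0 & 1 \end{smallmatrix}\bigr) : a \in L\bigr\} \cong (L,+)$.
On $U(L)$ the word map collapses to the $\Z$-linear form $(a_1,\dots,a_n) \mapsto \sum_i e_i a_i$, where $(e_1,\dots,e_n) \in \Z^n$ is the abelianization of $w$. Since $w \notin F_n^{(1)}$, some $e_j \ne 0$, and taking $a_j = 1$ and $a_i = 0$ for $i \ne j$ already produces a nontrivial unipotent element in $\Imm\,\widetilde{w}$.

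For the main sub-case $w \in F_n^{(1)} \setminus F_n^{(2)}$, I would introduce the Magnus map $\varphi\colon F_n \to B(R_n)$ defined by $\varphi(x_i) = \bigl(\begin{smallmatrix} t_i & s_i \\ 0 & t_i^{-1} \end{smallmatrix}\bigr)$, where $R_n = \Z[t_1^{\pm 1},\dots,t_n^{\pm 1},s_1,\dots,s_n]$, and invoke the classical result recalled in the excerpt that $\varphi$ descends to an injection $F_n/F_n^{(2)} \hookrightarrow B(R_n)$. Since $w \in F_n^{(1)}$, its image lies in the commutator subgroup $[B(R_n),B(R_n)] = U(R_n)$, so $\varphi(w) = \bigl(\begin{smallmatrix} 1 & P \\ 0 & 1 \end{smallmatrix}\bigr)$ for a single polynomial $P \in R_n$; injectivity combined with $w \notin F_n^{(2)}$ forces $P \not\equiv 0$.

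To finish, I would specialize: since $\ch L = 0$, the field $L$ is infinite, so the nonzero $P$ takes a nonzero value at some point $(\tau,\sigma) \in (L^\ast)^n \times L^n$; setting $g_i = \bigl(\begin{smallmatrix} \tau_i & \sigma_i \\ 0 & \tau_i^{-1} \end{smallmatrix}\bigr) \in \SL_2(L)$ then produces $\widetilde{w}(g_1,\dots,g_n)$ as a nontrivial element of $U(L)$. The only genuine obstacle is the Magnus injectivity statement on $F_n/F_n^{(2)}$, which is classical; once it is in hand, the rest is just the abelianization computation in the easy case and a standard density/specialization argument in the main case.
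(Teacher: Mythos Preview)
Your proof is correct and follows essentially the same approach as the paper: the same case split on whether $w \in F_n^{(1)}$, the same restriction to $U(L)$ in the easy case, and the same Magnus embedding in the main case. The only difference is in the very last step: the paper argues that $\w\colon B(\C)^d\to U(\C)$ is surjective and then uses Zariski density of $B(L)$ in $B(\C)$ to find a nontrivial value over $L$, whereas you go directly from $P\ne 0$ to a nonzero specialization over the infinite field $L$. Your version is a bit cleaner and has the mild advantage that it does not presuppose $L\subseteq\C$, so it applies verbatim to any characteristic-zero field as in the statement. (One small cosmetic point: you write $[B(R_n),B(R_n)] = U(R_n)$, but only the inclusion $\subseteq$ is needed and is what actually holds in general; this does not affect the argument.)
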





The second approach is based on the study of the structure of the representation variety
$$R (\Gamma_w, \SL_2(\C)) =\{ \rho\colon \Gamma_w \rightarrow \SL_2(\C)\}.$$
Namely, it can be identified with
$$\W_w =  \{(g_1, \dots, g_d) \in G^d\,\,\,\mid\,\,\,  \w(g_1, \dots, g_d) = 1\}$$
(see \cite[page~4]{LM}) and thus embeds into
$$\T_w = \{(g_1, \dots, g_d) \in G^d\,\,\,\mid\,\,\, \tr \w(g_1, \dots, g_d) = 2\}.$$
Thus, $\T_w$ is the variety of all elements $\gamma$ in $G^d$ such that $\w(\gamma)$ is a unipotent element of $G$. Obviously, $\W_w \subseteq  \T_w$. Then
\begin{equation}
\label{equa1111}
\W_w \ne  \T_w \Rightarrow \text{all unipotent elements belong to}\,\,\,\Imm\,\w.
\end{equation}
Looking at the irreducible components of these varieties, one can notice that all
components of $\T_w$ are of dimension $3d-1$. Hence, once $\W_w$ has a component
of smaller dimension, one can deduce that it is properly included in a component
of $\T_w$, so that the image of $\w$ contains all unipotent elements of $G$.

This method requires heavy computations, so that longer is $w$, sooner we arrive
at the limit of computer resources, even if for detecting small-dimensional components we replace $\W_w$ with the character variety $\W_w\sslash G$.

\begin{remark}
We do not know whether the implication converse to \eqref{equa1111}:
$$\text{all unipotent elements belong to}\,\,\,\Imm\,\w \Rightarrow  \W_w \ne  \T_w$$
is true.
\end{remark}


To get semisimple and unipotent elements in the image of a word map on groups of higher Lie rank,
one can use the following classical construction.

\subsection{Embedding of $\SL_2(L)$ into simple groups.} \label{sec:emb} Let $L$ be a field, let $\G$ be a simple
linear algebraic group defined and split over $L$, and let $G = \G(L)$.
The existence of appropriate homomorphisms  $\xi\colon \SL_2(L)\rightarrow G$ gives us a tool for investigating word maps, in particular,
for reducing some questions on word maps on $G$ to the corresponding questions for $\SL_2(L)$.

\bigskip

I. {\it The Morozov--Jacobson embedding.}  Let $L$ be a field of characteristic zero, and
let $u \in G=\G(L)$ be a unipotent element. Then there is a closed subgroup  $\tilde{\Gamma} \leq \G$ such that the subgroup $\Gamma := \tilde{\Gamma}(L) \leq G$ contains $u$ and is isomorphic either to $\SL_2(L)$ or to $\PSL_2(L)$,
see, e.g., \cite[7.4, 10.2]{Hu}; it is not so easy to distinguish between the two groups of rank 1 mentioned above,
see the discussion in \cite{MO3}.

Further,  let $\w\colon G^d\rightarrow G$ be a word map, and let
$\Res_\Gamma \, \w\colon \Gamma^d \rightarrow \Gamma $ be its restriction to $\Gamma$.
Let $\xi\colon \SL_2(L) \rightarrow G$ be a homomorphism such that $\Imm\,\xi = \Gamma$.  Denote by $\w^\prime \colon \SL_2 (L)^d\rightarrow\SL_2(L)$ the word map
induced by the same word $w\in F_d$.

I.1. Suppose that there exists a non-trivial unipotent element $u^\prime \in \Imm\,\w^\prime$.  Then $$\xi(u^\prime) \in \Imm\,(\Res_\Gamma\,\w)  \subset \Imm\,\w.$$
In particular, one can get any unipotent element in $\Imm\,\w$ (this was noticed in \cite{BaZa}).

I.2. Let $U$ be a maximal  unipotent subgroup of $G$ normalized by the group $T = \T(L)$ where $\T$ is a maximal split torus of $\G$,
let $u\in U$ be a regular unipotent element of $G$, and let $\Gamma \leq G$, $\Gamma \approx \SL_2(L)$ or $\PSL_2(L)$,
be a subgroup containing $u$.
Let $T_\Gamma \leq \Gamma$ be a maximal torus of $\Gamma$. We may assume $T_\Gamma \leq T$.

The following fact is well known, however we give a proof being unable to provide a reference.


\begin{prop} \label{flat}
If the order of $\,\,t \in T_\Gamma$ is large enough, then $\xi (t)$  is a regular semisimple element of $G$.
\end{prop}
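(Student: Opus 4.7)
The strategy is to identify $\Gamma$ as a \emph{principal} $\SL_2$-subgroup of $G$ and then to compute the action of $\xi(t)$ through each root of $G$. Since $\mathrm{char}\,L = 0$ and $u$ is regular unipotent, the Jacobson--Morozov theorem applied to the regular nilpotent $e = \log u$ yields an $\mathfrak{sl}_2$-triple, and Kostant's classical analysis of principal $\mathfrak{sl}_2$-triples tells us that any rank-one connected subgroup of $G$ containing a regular unipotent is, up to $G$-conjugacy, the principal $\SL_2$-subgroup. Consequently, $T_\Gamma = \xi(T_{\SL_2})$ coincides with the image of the principal cocharacter $\mu = 2\rho^\vee\colon \mathbb{G}_m \to T$, where $\rho^\vee = \sum_i \omega_i^\vee$ is the sum of fundamental coweights; equivalently, $\alpha_i(\mu(s)) = s^2$ for every simple root $\alpha_i$.

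Granting this identification, write $t = \xi(\mathrm{diag}(s, s^{-1}))$ for some $s \in L^*$ (determined up to sign if $\Gamma \cong \PSL_2(L)$). Then for every root $\alpha$ of $G$ relative to $T$,
\[
\alpha(\xi(t)) \;=\; s^{\langle \alpha,\,2\rho^\vee\rangle} \;=\; s^{2\,\mathrm{ht}(\alpha)},
\]
where $\mathrm{ht}(\alpha) \in \mathbb{Z}\setminus\{0\}$ is the height of $\alpha$ with respect to a fixed base of simple roots. Recall that $\xi(t) \in T$ is regular semisimple in $G$ precisely when $\alpha(\xi(t)) \ne 1$ for every root $\alpha$. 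Since $|\mathrm{ht}(\alpha)| \le h-1$, where $h$ is the Coxeter number of $G$, it suffices to demand that the order of $s$ in $L^*$ exceed $2(h-1)$: then the order of $s$ cannot divide any nonzero $2\,\mathrm{ht}(\alpha)$, so $s^{2\,\mathrm{ht}(\alpha)} \ne 1$ for every root. Translating back, this is guaranteed once the order of $t$ in $T_\Gamma$ exceeds $2(h-1)$ when $\Gamma \cong \SL_2(L)$, and $4(h-1)$ when $\Gamma \cong \PSL_2(L)$ (to absorb the kernel $\{\pm I\}$ of $\SL_2 \to \PSL_2$).

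The main obstacle is the first step: showing that $\Gamma$, introduced only as a subgroup of $G$ of rank one that contains the regular unipotent $u$, must be conjugate to the principal $\SL_2$-subgroup, and handling the two possible isomorphism types of $\Gamma$ uniformly. Once this structural identification is in place, the root-by-root computation above is mechanical and yields the required regularity of $\xi(t)$.
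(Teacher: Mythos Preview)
Your argument is correct, but it takes a genuinely different route from the paper's own proof, and the comparison is instructive.

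You identify $\Gamma$ as (a conjugate of) the \emph{principal} $\SL_2$ via Jacobson--Morozov and Kostant's conjugacy theorem for $\mathfrak{sl}_2$-triples, and then read off $\alpha(\xi(t)) = s^{2\,\mathrm{ht}(\alpha)}$ from the explicit formula for the principal cocharacter $2\rho^\vee$. This is clean and yields an explicit bound: order greater than $2(h-1)$ suffices (your $4(h-1)$ in the $\PSL_2$ case is safe but not sharp, since $\mathrm{ord}(s) \ge \mathrm{ord}(t)$ already). The cost is that you import a nontrivial structural theorem and must argue that the conjugacy can be carried out over $L$ (which does work, since in characteristic zero the relevant triples form a torsor under a unipotent group).

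The paper avoids invoking the principal $\SL_2$ altogether. Instead it argues directly from the shape of a regular unipotent, $u = x_{\alpha_1}(a_1)\cdots x_{\alpha_r}(a_r)u^*$ with all $a_i \ne 0$ and $u^* \in [U,U]$. Conjugating by $t \in T_\Gamma$ scales each simple-root coordinate by $\chi_i(t) = \alpha_i(t)$. On the other hand, inside $\SL_2$ the torus conjugates of a unipotent $u'$ sweep out infinitely many powers $u'^n$, hence the same holds for $u$ in $G$; comparing, one gets $\chi_1(t) = \cdots = \chi_r(t)$ on an infinite (hence Zariski-dense) subset of the one-dimensional torus $T_\Gamma$, so all $\chi_i$ coincide with a single character $\chi$. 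Then $\chi_\alpha = \chi^{\mathrm{ht}(\alpha)}$ for every positive root, and the regularity conclusion follows exactly as in your last step.

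In effect the paper \emph{re-proves} the one consequence of Kostant's theorem that is actually needed---that the simple roots restrict to the same character on $T_\Gamma$---by an elementary density argument, rather than quoting the full identification with the principal $\SL_2$. Your approach is more structural and gives the explicit Coxeter-number bound; the paper's is more self-contained and sidesteps the ``obstacle'' you flag, at the price of a slightly less explicit conclusion.
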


\begin{proof}
Let $R$ be the root system corresponding to $\G$. Fix
$\Pi = \{\alpha_1, \dots, \alpha_r\}$, a collection of roots corresponding to $\T$, then the group $U$ is generated by the root subgroups $\langle x_\alpha(t)\,\,\,\mid\,\,\,t \in L, \alpha \in \R^+\rangle$
(see, e.g., Section 5 of \cite{St2}), and any regular unipotent $u\in U$ is of the form
$$u = x_{\alpha_1}(a_1) x_{\alpha_2}(a_2) \cdots x_{\alpha_r}(a_r)u^*$$
where $a_i \ne 0, a_i \in L$ for every $i$, $u^* \in [U, U]$ (see \cite[3.1.13]{SS}, \cite[Lemma~3.2c]{St1}).
Then for every $t \in T_\Gamma$ we have
$$tut^{-1} =  x_{\alpha_1}(\chi_1(t)s_1) x_{\alpha_2}(\chi_2(t)s_2) \cdots x_{\alpha_r}(\chi_r(t)s_r)u^{**}$$
where $s_i \ne 0$ for every $i$, $u^{**} \in [U, U]$, and
$\chi_i \colon T_\Gamma\rightarrow L^*$ is the character of $T_\Gamma$ which corresponds to the root $\alpha$. 
Let $u^\prime$ be a unipotent element of $\SL_2(L)$ such that $\xi(u^\prime) = u$, and let $t^\prime\in \SL_2(L)$ be an element such that
$\xi(t^\prime) = t\in T_\Gamma$.
We can also identify $ u^\prime$ with a matrix $\begin{pmatrix} 1& a\cr 0&1\end{pmatrix}$ for some $a\in L^*$
and $t^\prime$ with a matrix  of the form $\begin{pmatrix} s&0\cr 0&s^{-1}\end{pmatrix}$. Since $\ch L = 0$,  we have an infinite set of powers  $u^{\prime n}$ among elements of the form $t^\prime u^\prime t^{\prime -1}$ for some $t^\prime \in \SL_2(L)$. Then the set
$\{t u t^{-1}\,\,\mid\,\,t \in T_\Gamma\}$ contains infinitely many elements of the form $u^m, m\in \Z$. This implies, in its turn, that
$\chi_1(t) = \chi_2 (t)= \cdots =\chi_r(t)$ for infinitely many elements $t \in T_\Gamma$. Further, all characters $\chi_i\colon T_\Gamma\rightarrow L^*$
are obtained by restricting characters of the torus $\T$ to the one-dimensional subtorus $\T_{\tilde{\Gamma}}\coloneqq\tilde{\Gamma}\cap \T$ and then on its $L$-points
$T_\Gamma = \T_{\tilde{\Gamma}}(L)$. Since the characters of any torus are continuous with respect to Zariski topology, the coincidence of characters of the one-dimensional torus $\T_{\tilde{\Gamma}}$ on an infinite set implies that these are the same characters, and therefore all restrictions
$\chi_i \colon T_\Gamma\rightarrow L^*$ are equal to a character  $\chi\colon  T_\Gamma\rightarrow L^*$.
Since every positive root $\alpha$ is a sum of the roots $\alpha_i$,  the corresponding character $\chi_\alpha\colon T_\Gamma\rightarrow L^*$
defined by the formula $tx_\alpha(s)t^{-1} = x_\alpha (\chi_\alpha (t)s)$ is equal to $\chi^N$ for some $N >0$. Then, if $t\in T_\Gamma$ is an element of sufficiently
large order, $tx_\alpha (s) t^{-1} \ne x_\alpha (s)$ for every $\alpha\in R^+$, and therefore $t$ is a regular element. 
\end{proof}

The following fact, used in \cite{HLS}, is an immediate consequence of Proposition \ref{flat}.

\begin{prop} \label{sharp}
If $t \in \Imm\,w^\prime$ is a split semisimple element of sufficiently large order, then $\xi(t) \in \Imm\, \w$ is a split regular semisimple element of $G$.
\end{prop}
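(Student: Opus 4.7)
The plan is to deduce Proposition \ref{sharp} directly from Proposition \ref{flat} together with two elementary compatibilities between $\xi$ and the word maps $\w'$, $\w$.

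First, since $\xi$ is a group homomorphism it intertwines the two word maps: $\xi(\w'(g_1,\dots,g_d))=\w(\xi(g_1),\dots,\xi(g_d))$ for every $(g_1,\dots,g_d)\in\SL_2(L)^d$. In particular $\xi(\Imm\,\w')\subseteq\Imm\,\w$, so $\xi(t)\in\Imm\,\w$ as required. Moreover, as a morphism of algebraic groups, $\xi$ sends semisimple elements to semisimple elements, so $\xi(t)$ is at least semisimple in $G$.

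To pin down splitness and regularity I would fix a split maximal torus $T'\subset\SL_2(L)$ with $\xi(T')=T_\Gamma$; such a $T'$ exists because $\ker\xi\subseteq\{\pm I\}$ is contained in every maximal torus of $\SL_2(L)$, while $T_\Gamma$ is a one-dimensional split torus in $\Gamma$. Since $t$ is split semisimple, it is $\SL_2(L)$-conjugate to some $\tilde t\in T'$. Then $\xi(\tilde t)\in T_\Gamma\subseteq T$ lies in a split maximal torus of $\G$, so $\xi(\tilde t)$ is a split semisimple element of $G$; because $\xi(t)$ is $G$-conjugate to $\xi(\tilde t)$, the same holds for $\xi(t)$. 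The order of $\xi(\tilde t)$ in $T_\Gamma$ satisfies $\operatorname{ord}(\xi(\tilde t))\ge \tfrac{1}{2}\operatorname{ord}(\tilde t)=\tfrac{1}{2}\operatorname{ord}(t)$, so if $\operatorname{ord}(t)$ is large enough then $\operatorname{ord}(\xi(\tilde t))$ exceeds the threshold of Proposition \ref{flat}. That proposition, applied to $\xi(\tilde t)\in T_\Gamma$, gives that $\xi(\tilde t)$ is regular in $G$; since regularity of semisimple elements is conjugation-invariant, $\xi(t)$ is regular as well, and we are done.

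The only non-trivial ingredient is Proposition \ref{flat}, which is already in hand, so no genuine obstacle remains. The rest is bookkeeping: the intertwining $\xi\circ\w'=\w\circ\xi^d$, the conjugation-invariance of splitness and regularity, and a routine order estimate exploiting that $\xi$ is at most two-to-one. The only mildly delicate point worth checking carefully is that the threshold on $\operatorname{ord}(t)$ in the hypothesis can be translated into a threshold on $\operatorname{ord}(\xi(\tilde t))\in T_\Gamma$ suitable for Proposition \ref{flat}, but this is immediate from $|\ker\xi|\le 2$.
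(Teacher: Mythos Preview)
Your argument is correct and is exactly the unpacking the paper has in mind when it declares Proposition~\ref{sharp} ``an immediate consequence of Proposition~\ref{flat}'': intertwine $\xi$ with the word maps to get $\xi(t)\in\Imm\,\w$, conjugate $t$ into a split torus $T'$ mapping to $T_\Gamma\le T$, and then invoke Proposition~\ref{flat} (read as: an element of $T_\Gamma$ of large order is regular in $G$) after the obvious order estimate coming from $|\ker\xi|\le 2$. There is nothing to add.
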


II. {\it The Testerman embedding}.  Let $\ch \, L = p > 0$.
Then the previous constructions from the characteristic zero case have the following constraint:
one can put a unipotent element  $u \in G$ in the image of a homomorphism $\xi\colon \SL_2(L)\rightarrow G$
only if the order of $u$ is equal to $p$. It turns out that this condition
on the order of $u$ is sufficient. The following theorem  was proved in \cite{Te}
for ``good'' primes
(see also \cite{McN} for a streamlined proof). The case of ``bad''
primes was treated in \cite{PST}.

\begin{theorem} \label{Test}
Let $G$ be a simple algebraic group over an algebraically closed field of characteristic $p>0$. Let $u\in G$ be a unipotent element. Then $u$ is contained
in a closed connected subgroup $\Gamma\le G$ of type $\sA_1$, except for the case $p=3$, $G=\sG_2$,
$u$ is an element of order 3 lying in a certain conjugacy class (labelled $\sA^{(3)}_1$).
\end{theorem}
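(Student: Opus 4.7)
I would follow the strategy of \cite{Te} for good primes and of \cite{PST} for bad primes. The first step is a reduction to \emph{distinguished} unipotent elements, i.e.\ those $u$ whose connected centralizer $C_G(u)^\circ$ contains no non-trivial torus. By the Bala--Carter--Pommerening theorem, every unipotent class in $G$ has a representative that is distinguished in some Levi subgroup $L$ of a parabolic $P\le G$; since an $\sA_1$-subgroup of $L$ is automatically an $\sA_1$-subgroup of $G$, it suffices to handle the distinguished case.

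For distinguished $u$ I would attach an associated (Bala--Carter) cocharacter $\lambda\colon \mathbb{G}_m\to G$ and use the induced grading $\mathrm{Lie}(G)=\bigoplus_{i\in\Z}\Fg(i)$. The centralizer $C_G(\lambda)$ has a dense open orbit in $\Fg(2)$, and after a conjugation one may assume that $u$ lies in the corresponding ``Richardson'' orbit inside the unipotent subgroup $U(\lambda)$ spanned by the root subgroups of $\lambda$-weight $\ge 2$. The plan is then to produce a companion element $u^{-}$ in the opposite unipotent radical, supported on root subgroups of $\lambda$-weight $-2$, so that $u$, $u^{-}$ and the image of $\lambda$ satisfy the Steinberg relations defining $\mathrm{SL}_2$ (or $\mathrm{PGL}_2$). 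This would yield the desired closed connected $\sA_1$-subgroup $\Gamma\le G$ with $u\in\Gamma$.

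The hardest step is the verification that the candidate $\Gamma$ is genuinely of type $\sA_1$: one must check that the Chevalley commutator formulae between $u$ and $u^{-}$ recover $\lambda(t)$ for the appropriate $t$, so that the candidate homomorphism from $\mathrm{SL}_2$ into $G$ extends to an algebraic group homomorphism whose image contains $u$. In characteristic zero, and more generally in good characteristic, this is painless: Jacobson--Morozov provides an $\mathfrak{sl}_2$-triple in $\mathrm{Lie}(G)$ through the nilpotent $\log u$, and the Springer isomorphism between the nilpotent cone and the unipotent variety lifts this triple to the group level; this is essentially Testerman's construction. In bad characteristic both tools fail, and one is forced into a case-by-case analysis of the (finitely many) distinguished unipotent classes in the relevant---almost exclusively exceptional---groups, working directly in Chevalley-group coordinates. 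The main obstacle is precisely this verification: in every situation but one, a suitable $u^{-}$ can be found and the Steinberg relations can be checked by hand, but for $G=\sG_2$ with $p=3$ and $u$ in the class labelled $\sA^{(3)}_1$, a direct inspection shows that no candidate $u^{-}$ in the weight $-2$ root subspace produces the required relations; one then shows, by enumerating the closed connected $\sA_1$-subgroups of $\sG_2$ at $p=3$ up to conjugacy, that this conjugacy class is not met by any of them, which yields the stated exception.
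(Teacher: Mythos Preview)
The paper does not give its own proof of this theorem; it is quoted as a black box, with the attributions \cite{Te} for good primes (and \cite{McN} for a streamlined argument) and \cite{PST} for bad primes. So there is no in-paper argument to compare against, and your proposal is in fact a sketch of the proofs in those very sources, which is entirely appropriate.

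Two small comments on the sketch itself. First, the phrase ``in good characteristic, this is painless: Jacobson--Morozov provides an $\mathfrak{sl}_2$-triple'' undersells the content of \cite{Te}: Jacobson--Morozov in its classical form needs $p=0$ or $p$ larger than the Coxeter number, and Testerman's contribution was precisely to push the construction through for all good $p$, working at the group level rather than via exponentiation of an $\mathfrak{sl}_2$-triple. Second, the theorem as stated (both here and in your sketch) tacitly assumes that $u$ has order $p$; elements of order $p^k$ with $k\ge 2$ can never lie in an $\sA_1$-subgroup, and the paper makes this hypothesis explicit in the paragraph preceding the statement. Your reduction to distinguished elements is the right first move, but the order-$p$ condition is what guarantees that the Bala--Carter--Pommerening parametrisation is available in the form you need.
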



Here is an immediate consequence.

\begin{corollary}
Let $p$, $G$, $\Gamma $ and $u$ be as in Theorem \ref{Test}. Let $w\in F_d$ be a non-trivial word, and
let $\w^\prime\colon \Gamma^d\to \Gamma$ be the corresponding word map. Suppose that there exists a non-trivial unipotent element $u^\prime \in \Imm\,\w^\prime$. Then $u$ belongs to the image of
$\w\colon G^d\to G$.
\end{corollary}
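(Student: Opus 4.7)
\medskip

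\noindent
\textbf{Proof proposal.} The argument is meant to mirror the Morozov--Jacobson reduction spelled out in I.1 of Section \ref{sec:emb}, with Testerman's theorem playing the role of the Morozov--Jacobson construction in positive characteristic. The plan is to show (i) that $u$ and $u^\prime$ are conjugate inside the $\sA_1$-subgroup $\Gamma$, and (ii) that the image of $\w$ in $G$ is invariant under conjugation by elements of $G$, so that (i) together with $u^\prime\in\Imm\,\w$ yields $u\in\Imm\,\w$.

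For step (i): by Theorem \ref{Test}, $\Gamma$ is a closed connected subgroup of $G$ of type $\sA_1$ defined over an algebraically closed field of characteristic $p$, so $\Gamma\cong \SL_2$ or $\Gamma\cong \PSL_2$. In either realization the non-trivial unipotent elements of $\Gamma$ form a single conjugacy class: conjugation of $\bigl(\begin{smallmatrix}1&a\\0&1\end{smallmatrix}\bigr)$ by $\bigl(\begin{smallmatrix}t&0\\0&t^{-1}\end{smallmatrix}\bigr)$ yields $\bigl(\begin{smallmatrix}1&at^{2}\\0&1\end{smallmatrix}\bigr)$, and the ground field being algebraically closed we may always pick $t$ with $at^{2}=1$; the same argument passes to $\PSL_2$ since the centre is absorbed. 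The hypothesis places $u$ in such a $\Gamma$, so $u$ is a non-trivial unipotent of $\Gamma$; by assumption, $u^\prime\in\Imm\,\w^\prime\subseteq\Gamma$ is another non-trivial unipotent. Therefore there exists $\gamma\in\Gamma$ with $\gamma\,u^\prime\,\gamma^{-1}=u$. (Note that the excluded conjugacy class $\sA^{(3)}_1$ of $\sG_2$ in characteristic $3$ cannot arise here precisely because the hypothesis requires $u$ to sit inside a connected $\sA_1$-subgroup.)

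For step (ii): since $\Gamma\subseteq G$ and both word maps are induced by the same $w\in F_d$, the restriction $\Res_\Gamma\,\w$ agrees with $\w^\prime$, and hence
\[
\Imm\,\w^\prime=\Imm\,(\Res_\Gamma\,\w)\subseteq \Imm\,\w.
\]
In particular $u^\prime\in\Imm\,\w$. The image of any word map is stable under conjugation in the ambient group, since for $g\in G$ one has $g\,\w(g_1,\dots,g_d)\,g^{-1}=\w(gg_1g^{-1},\dots,gg_dg^{-1})$. Applying this with the element $\gamma\in\Gamma\subseteq G$ produced in step (i), we conclude
\[
u \;=\; \gamma\,u^\prime\,\gamma^{-1}\;\in\;\Imm\,\w,
\]
which is the desired statement.

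The only point that requires care is step (i), namely the conjugacy of two non-trivial unipotents inside $\Gamma$; this is elementary once one knows that $\Gamma$ is of type $\sA_1$ over an algebraically closed field, and the role of Theorem \ref{Test} is precisely to supply such a $\Gamma$ containing $u$. Step (ii) is then a routine conjugation argument, of the same flavour as in I.1 above.
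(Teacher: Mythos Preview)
Your proof is correct and matches exactly what the paper intends: the corollary is stated as an ``immediate consequence'' with no separate proof, and the implicit argument is precisely the one you give --- reduce to the single non-trivial unipotent conjugacy class in the $\sA_1$-subgroup $\Gamma$ and use conjugation-invariance of $\Imm\,\w$, just as in the characteristic-zero case I.1 of Section~\ref{sec:emb}. Your parenthetical about the excluded $\sA^{(3)}_1$ class is harmless but unnecessary, since the hypothesis ``$\Gamma$ and $u$ as in Theorem~\ref{Test}'' already places $u$ inside $\Gamma$.
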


\section{Problems of Waring type} \label{sec:Waring}


If $\G$ is a semisimple algebraic group over an algebraically closed field $K$, $G=\G(K)$ and $w\in F_d$ is a non-trivial word,
then, even if the surjectivity of the word map $\w\colon G^d\to G$ is unknown (or is known to fail), the Borel dominance theorem
guarantees that every element $g\in G$ can be represented as a product of at most two $w$-values: $g=g_1g_2$
with $g_i \in \Imm \w$. However, Thom's phenomenon discussed in Section \ref{Thom} shows that this is not necessarily the case
when the base field is not algebraically closed. Moreover, the proof of Thom's theorem shows that if $w$ varies,
the $w$-width of a compact real group $G=\cG(\R)$ can be made as large as we wish.

Indeed, fix $\varepsilon >0$. Let $w$ be a Thom word, i.e., the image of $\w$ is contained in the $\varepsilon$-neighbourhood of the identity element of $G$.
Then given a positive integer $k$, one can easily prove (say, by induction on $k$)
that for any $g_1,\dots, g_k\in \Imm\,w$ we have (with the notation of Theorem \ref{thom-hls}(i)) $l(g_1\cdots g_k)=d(1, g_1g_2\cdots g_k) <
k\varepsilon $. Hence, taking
smaller $\varepsilon$ and choosing an appropriate Thom's word $w$, one can make the $w$-width of $G$
larger than any given positive integer.

\medskip

However, for {\it split} groups the situation is not that hopeless, though also here one can observe negative-positive results.

\medskip

Let $\G$ be a split, simple, simply connected linear algebraic group defined over a field $K$ (not necessarily algebraically closed).
Then the group $G= \G(K)$ is a {\it quasi-simple} abstract group (that is, $G=[G,G]$ and $G/Z(G)$ is simple), except for $G = \SL_2(\mathbb F_2)$, $\SL_2(\mathbb F_3)$,   $\SU_3(\mathbb F_4)$, $\sB_2(2)$, $\sG_2(2)$.

There are two different cases to be considered separately: finite and infinite ground fields.

\subsection{Case of finite fields}
In the case of a finite ground field $K$, Borel's dominance theorem is even less meaningful
than in the case where the ground field is real or $p$-adic. So one can consider
the $w$-width as a reasonable measure for the size of the image of the word map $\w$. The aim is to obtain results of the flavour of Theorem \ref{thom-hls}(ii), which guarantee that
every element of $G$ can be represented as a product of ``small'' number of $w$-values.

Recall that if $K=\mathbb F_q$, apart from the case where $G$ is split giving rise to the
abstract simple groups $\G(q)$ (Chevalley groups), we have several additional
series.  Namely, let $\G$ be a connected, simple, simply connected linear algebraic $K$-group. Since $K$ is finite, by a theorem
of Lang $\G$ is quasi-split (that is, has a $K$-defined Borel subgroup). If $\G$ is
not split, we have {\it twisted forms} of Chevalley groups (sometimes called
Steinberg groups) of types $^2\sA_r$ $(r>1)$, $^2\sD_r$ $(r>3)$, $^3\sD_4$, $^2\sE_6$,
whose groups of $K$-points $\G(K)$ are quasi-simple abstract groups. If we add the abstract groups of types
$^2\sB_2(2^{2m+1})$, $^2\sG_2(3^{2m+1})$, $^2\sF_4(2^{2m+1})$ (the Suzuki and Ree groups), each of which is obtained as the group of fixed points of an appropriate automorphism of the corresponding simple algebraic group, exclude
$^2\sB_2(2)$ and $^2\sG_2(3)$ that are not quasi-simple, and replace $^2\sF_4(2)$
with its derived subgroup (called the Tits group),
we obtain the main infinite family of finite non-abelian simple groups called finite simple groups of Lie type. Together with
the family of alternating groups $A_n$ and 26  sporadic groups, these are all finite simple groups. Thus, any general result on word maps on finite simple groups can also be viewed as a result on word maps on groups of points of a simple  split (or quasi-split) algebraic group over a finite field (up to the centre).

Here we have the following negative-positive result:

\begin{theorem} \label{th:F}
${}$
\begin{itemize}
\item[(i)]
Let $G$ be a finite non-abelian simple group, and let $A$ be an $\operatorname{Aut}(G)$-invariant subset of $G$ such that $1 \in A$.
Then there exists a word $w \in  F_2$ such that $\Imm \w = A$.
\item[(ii)]
Let $1\ne w_1(x_1, \dots, x_n)\in F_n, \,\,1\ne w_2(y_1, \dots, y_m)\in F_m, \,\,\,w = w_1 w_2$.
Then there exists $c = c(w_1,w_2)$ such that for every quasi-simple group $G$ of order greater than $c$ the image of $\w\colon G^{n+m}\rightarrow G$ contains $G\setminus Z(G)$.
\end{itemize}
\end{theorem}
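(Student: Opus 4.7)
For part (i), the necessity of the two conditions is immediate: setting all variables to $1$ forces $1 \in \Imm\w$, and for any $\sigma \in \Aut(G)$ we have $\sigma(\w(g_1,\ldots,g_d)) = \w(\sigma(g_1),\ldots,\sigma(g_d))$, so the image is $\Aut(G)$-invariant. For sufficiency, I would follow the strategy of Lubotzky. The first step is a single-orbit version: given any $\Aut(G)$-orbit $\Omega \subset G$, construct $w_\Omega \in F_2$ with $\Imm w_\Omega = \{1\}\cup\Omega$. Starting from any $u\in F_2$ that evaluates to a chosen representative $g\in\Omega$ at some pair $(h_1,h_2)\in G^2$, equivariance automatically places $\Omega$ in $\Imm u$, so the task reduces to \emph{shrinking} $\Imm u$. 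Exploiting $\exp(G) \mid |G|$, one inserts high-power subwords tailored to collapse each unwanted evaluation to $1$ without disturbing the chosen value—this can be done by a finite iteration over the finitely many ``bad'' pairs $(h_1',h_2')$ at which $u$ takes an unwanted value.

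Writing $A\setminus\{1\} = \Omega_1 \sqcup \cdots \sqcup \Omega_k$, the second step is to combine the $w_{\Omega_i}$ into a single word in two letters. The idea is to choose $2k$ elements $u_1,\ldots,u_{2k}\in F_2$ that freely generate a free subgroup (for instance of the form $[a,b^i]$, $i=1,\ldots,2k$, up to sign), substitute the variables of $w_{\Omega_i}$ by the pair $(u_{2i-1},u_{2i})$, and multiply the resulting $k$ words together. A final ``masking'' trick—multiplying the $i$-th factor by a conjugate of a large power that is trivial on $G$—ensures that if any one of the substitutions happens to make $w_{\Omega_i}$ nontrivial then all the others collapse to $1$, so the overall image becomes $\{1\}\cup\Omega_1\cup\cdots\cup\Omega_k = A$. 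The main obstacle here is precisely this combination step: making sure no product of nontrivial pieces from different orbits creeps into the image. This is handled by the careful choice of $u_1,\ldots,u_{2k}$ coming from a free subgroup, together with the $|G|$-th power collapse.

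For part (ii), the plan is to invoke the machinery of Larsen--Shalev--Tiep. Let $N=n+m$. By the Classification of Finite Simple Groups, a quasi-simple group $G$ with $|G|>c$ (for $c$ sufficiently large) is either an alternating cover or a finite group of Lie type over some $\mathbb{F}_q$ in which either the Lie rank or $q$ is large. In either regime, the Borel dominance theorem applied to the ambient connected algebraic group, combined with Lang--Weil estimates, shows that both $|w_1(G)|$ and $|w_2(G)|$ are $\geq (1-o(1))|G|$ as $|G|\to\infty$. The remaining step is to conclude that $w_1(G)\cdot w_2(G) \supseteq G\setminus Z(G)$, which follows from the Frobenius-type formula
\[
\#\{(a,b)\in w_1(G)\times w_2(G) : ab=g\} \;=\; |w_1(G)|\,|w_2(G)|\sum_{\chi\in\operatorname{Irr}(G)} \frac{P_{w_1}(\chi)\,P_{w_2}(\chi)\,\chi(g)}{\chi(1)},
\]
where $P_{w_i}$ are the Fourier coefficients of the word-induced distributions. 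The contribution of the trivial character is $|w_1(G)||w_2(G)|/|G|$, and the remaining sum is bounded, by the Liebeck--Shalev character estimates, by a quantity that is $o(|w_1(G)||w_2(G)|/|G|)$ uniformly over $g\notin Z(G)$ as $|G|\to\infty$. Thus the count is positive for all such $g$ once $|G|>c=c(w_1,w_2)$.

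The technical heart—and the main obstacle—for (ii) is the uniform character estimate $|\chi(g)|\leq \chi(1)^{1-\varepsilon}$ (for $g\notin Z(G)$) across all quasi-simple $G$ of sufficiently large order, which requires a case-by-case analysis using the deep structure theory of finite simple groups; once this is in hand, the rest is a direct application of the Frobenius formula. The restriction of the image to $G\setminus Z(G)$ (rather than all of $G$) reflects the standard fact that central elements can be obstructed by global invariants such as abelianization characters of $w_1w_2$ on the Schur cover.
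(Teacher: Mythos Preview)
Your proposal diverges substantially from the paper's account for both parts, and in each case there is a real gap.

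\textbf{Part (i).} The paper reports that Lubotzky's proof rests on the ``one-and-a-half generation'' theorem: for every $1\ne a\in G$ there exists $b$ with $\langle a,b\rangle=G$. From this one first builds a word $v\in F_2$ with the property $v(a,b)\ne 1 \Leftrightarrow \langle a,b\rangle = G$; this detector word is then used to manufacture words with prescribed image. Your scheme bypasses $1\tfrac12$-generation entirely and instead proposes to ``shrink'' $\Imm u$ by inserting $|G|$-th-power subwords to kill bad evaluations one at a time. The difficulty you gloss over is that modifying $u$ to kill its value at one pair $(h_1',h_2')$ will in general change its value at the target pair $(h_1,h_2)$ and at every other pair simultaneously; there is no evident reason the process terminates. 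Likewise, in your combination step, multiplying the $w_{\Omega_i}$ (even after substituting free generators) produces a word whose image is a priori the set of all products $\prod_i z_i$ with $z_i\in\{1\}\cup\Omega_i$, typically much larger than $A$; your ``masking trick'' is left unspecified, and it is exactly here that Lubotzky exploits the detector word $v$ to force all but one factor to be trivial.

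\textbf{Part (ii).} The paper describes the Larsen--Shalev--Tiep/Guralnick--Tiep approach as follows: one locates specific \emph{regular semisimple conjugacy classes} $C_1\subset\Imm\w_1$ and $C_2\subset\Imm\w_2$ (via a Chebotarev-type argument, Deligne--Lusztig theory, and Lefschetz/Lang--Weil estimates) and then proves $C_1C_2\supseteq G\setminus Z(G)$ using character bounds tailored to those particular classes; the passage to quasi-simple groups is a separate, nontrivial step. Your route is different: you assert $|w_i(G)|\ge(1-o(1))|G|$ and then apply a Frobenius-type count. But Borel + Lang--Weil does not yield that density statement in the regimes of growing Lie rank at bounded $q$ or for alternating groups, since there is no ambient algebraic group of bounded dimension to which Lang--Weil applies. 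Moreover, the displayed formula is not correct as written: the classical Frobenius formula counts representations of $g$ as a product of elements from fixed \emph{conjugacy classes}, not from arbitrary $\Aut(G)$-invariant sets, and the prefactor $|w_1(G)|\,|w_2(G)|$ does not belong there. The actual argument requires precisely the reduction to single, carefully chosen conjugacy classes that you have omitted.
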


Statement (i) is a theorem of A.~Lubotzky  \cite{Lu}, showing that the image of a word map
can be made as small as possible, within the inevitable natural constraints (the image must contain 1
and be invariant under any automorphism), if one fixes $G$ and varies $w$. (Earlier results of this flavour were obtained
by M.~Kassabov and N.~Nikolov \cite{KN}, and M.~Levy \cite{Levy1} for some families of finite simple groups.)

The proof of (i) is based on the ``one-and-a-half'' generation theorem \cite{GK}, \cite{Stein}: for every element $a\ne 1$ of a finite non-abelian simple group $G$ there exists $b\in G$ such that $\langle a, b\rangle = G$. The proof is tricky enough and gives the following interesting result: if $G$ is a
 finite non-abelian simple group, then there is a word $w\in F_2$ such that for every
 $(a, b) \in G\times G$ we have
 $$w(a, b) \ne 1\Leftrightarrow \langle a, b\rangle = G.$$
 Since we may view the word $w\in F_2$ as an element in $F_d$, $d>2$, we may formulate the result also for words in $F_d$.

\begin{remark}
This negative result shows that in the positive result of (ii) one cannot drop the assumption that
the rank of $G$ is large enough. Indeed, in the situation of (i), one can choose $A$ to be a single conjugacy class so that
for the pair $(w,G)$ the $w$-width of $G$ will be greater than 2.
\end{remark}

\begin{remark}
Statement (i) was extended by M.~Levy to quasi-simple and almost simple finite groups \cite{Levy2}.
\end{remark}

Statement (ii) (which should be compared with Theorem \ref{thom-hls}(ii))
is a theorem of Guralnick and Tiep \cite{GT}, who made a final step along the road paved
in two earlier papers of Larsen--Shalev--Tiep \cite{LST1}, \cite{LST2}. The proof is difficult.
The principal part, contained in \cite{LST1}, is mainly based on the Deligne--Lusztig theory of characters combined with some
arithmetic-geometric properties of groups of Lie type. The latter ones include a delicate theorem of Chebotarev
flavour which guarantees the existence of regular semisimple elements in the image
of $\w$ lying in a split maximal torus of $G$ and is proven
with the help of high-tech machinery (Lefschetz' trace formula and estimates of Lang--Weil type).
Using these methods, the authors finally prove that for a given pair of words  $w_1,w_2$ and a big group $G$
there are special semisimple conjugacy classes $C_1, C_2$ such that $C_1C_2 \supseteq G\setminus \{1\}$ and $C_1 \subset \Imm\, \w_1, C_2 \subset \Imm\, \w_2$.
Since $1$ is contained in the image of every word map, we have $\Imm\, \w_1 \Imm\, \w_2 = G$, and therefore the map $\w$ is surjective.

In \cite{LST2}, results and constructions from \cite{LST1} are extended to the case
where $G$ is {\it quasi-simple}, so that to get the word width at most 3, with
exhibiting central elements of word length 3 obstructing to improve that to 2, but
leaving open the question whether all {\it non-central} elements are of length
at most 2. This last step was done in \cite{GT}, with significant effort, using
subtle group-theoretic arguments (such as looking for regular elements of special form) combined with some facts from spinor theory.

\begin{remark}
We do not know if statement (ii) can be extended to the cases where $w$ is a product of two {\it non-disjoint} words $w_1w_2$.
A natural constraint here is that the word $w$ must not be representable
as a proper power of another word: $w \ne w_1^k$ for $k >1$.
\end{remark}

\begin{remark}
Theorem \ref{th:F} concerns arbitrary words $w$. The behaviour of particular
word maps on finite simple groups has been a subject of intense study over several decades.
We only present here a brief account of main achievements, often giving only final results
and omitting the preceding contributions.

\begin{itemize}
\item[(i)] {\it Commutator $w=[x,y]=xyx^{-1}y^{-1}$}. The map $\w\colon G^2\to G$ is surjective
on all finite simple groups $G$ \cite{LOST1}. If $G$ is quasi-simple, $\wid_w(G)\le 2$, the
estimate is sharp, and all groups with $\wid_w(G)=2$ are listed \cite{LOST2}. See \cite{Mall}
for a detailed survey of this longstanding problem.
\item[(ii)] {\it Words that are not surjective on infinitely many finite simple groups}.
A family of words with this property was constructed by Jambor, Liebeck and O'${}^{\prime}$Brien \cite{JLO},
the simplest of them is $w=x^2[x^{-2},y^{-1}]^2$, which is not surjective on $\PSL (2,\mathbb F_p)$
for infinitely many $p$.
\item[(iii)] {\it Power words $w=x^n$.} For obvious reasons, here one cannot expect any general surjectivity
result because the image of $\w$ collapses to 1 for all groups of order divisible by $n$. The main
problem consists in the computation of $\wid_w(G)$. Almost all results in this direction have been
superseded by the paper of Guralnick, Liebeck, O${}^{\prime}$Brien, Shalev, and Tiep \cite{GLOST}. Let us quote some
of their fundamental results.

      \begin{itemize}
\item [(1)] Let $N=p^aq^b$ where $p$, $q$ are prime numbers and $a,b$ are non-negative integers. Then the word map induced by $w(x,y)=x^Ny^N$ is surjective on all finite non-abelian simple groups.
\item [(2)] Let $N$ be an odd positive integer. Then the word map induced by $w(x,y,z)=x^Ny^Nz^N$ is surjective on all finite quasi-simple groups.
\item [(3)] Let $N=p_1^{\alpha_1}\cdots p_k^{\alpha_k}$ $(p_1<\cdots <p_k$, $\alpha_i>0)$
be the prime decomposition of $N$, let $\pi (N)\coloneqq k$, and let $\Omega (N):=\sum_{i=1}^k \alpha_i$. Suppose that $N$ runs through a set $S\subset\mathbb N$
such that either (a) $\Omega (N)$, or (b) $\pi (N)$ is bounded by some constant $C$. Then
for every $N\in S$ the word map induced by $w=x^Ny^N$ is surjective on all sufficiently large finite simple groups $G$. Here in case (a) this means that
for a certain function $f$ the degree $n$ (resp. the Lie rank) of $G$ must be greater than
$f(C)$ if $G=A_n$ (resp. $G$ is of Lie type), whereas in case (b) also the size
of $\mathbb F_q$ must be greater than $f(C)$ if $G$ is of Lie type over $\mathbb F_q$.
     \end{itemize}
Some comments are in order. First, note that (1) and (2) can be viewed as analogues
of the Burnside and Feit--Thompson theorems, respectively. Second, both (1) and (2)
hold for {\it all} finite simple groups $G$, similar to (i) and being in contrast
with Theorem \ref{th:F}(ii) and other earlier results of such flavour, valid only
for sufficiently large groups. Finally, the authors show that all these results
are sharp. First, note that one cannot extend (i) to the case where $N$ is a product of {\it three} prime powers: look at $N=60$ and $G=A_5$. Further, (1) cannot be extended to all {\it quasi-simple} groups $G$, even in
the weak sense: it is not always true that every non-central element of $G$ lies
in the image of $x^Ny^N$. An explicit example is provided by looking at elements of order 5 in $\SL_2(5)$, none of which lies in the image of $x^{20}y^{20}$.

Furthermore, it is not true that for every odd integer $N$ the word map induced by $w=x^Ny^N$ is
surjective on every non-abelian simple group $G$ (counter-examples appear among
$\SL_2(q)$ and $^2\sG_2(q)$).

Finally, they use the fact that there are infinitely many primes $p$ with
$\Omega (p^2-1)\le 21$. Then for $N\coloneqq p(p^2-1)$ one has
$\pi (N)\le \Omega (N)\le 22$ but $w=x^Ny^N$ is an identity in $\PSL_2(p)$. Thus
(3) does not hold for finite simple groups of Lie type and bounded rank.

As to proofs, they are mostly of group-theoretic nature. Perhaps the most difficult technical point consists in constructing certain elements of 2-power order
which are regular (or close to such), in the spirit of similar considerations
in \cite{GT} for elements of $p$-power order.
\end{itemize}
\end{remark}

\begin{remark}
In \cite{LaTi}, Larsen and Tiep refined \cite{LST1} by proving that given any non-trivial words $w_1$, $w_2$, for all sufficiently large finite nonabelian simple groups $G$ one can find ``thin'' subsets $C_i\subseteq \Imm \w_i$ $(i=1,2)$ so that
$C_1C_2=G$; explicitly, one can arrange the size of $C_i$ as $O\left(\sqrt{\left\vert G\right\vert \log \left\vert G\right\vert } \right)$.
\end{remark}

\begin{remark}
One has to mention another approach to measuring the size of the image of
a word map, going back to Larsen \cite{La}. It consists in obtaining lower estimates
on the size of the image of the form $\left\vert \Imm \w\right\vert>c\left\vert G\right\vert$.
See, e.g., \cite{LS1}, \cite{NP}, \cite{GKSV} for variations on this theme.
\end{remark}

\subsection{Split groups over infinite fields}
The following result is obtained in \cite{HLS}.

\begin{theorem} [\cite{HLS}]
Let $\G$ be a simple, simply connected  algebraic group defined and split over an infinite field $K$, and let $G = \G(K)$.
Then
\begin{itemize}
\item[(i)] for any four non-trivial words $w_1 \in F_k, w_2 \in F_l, w_3 \in F_m, w_4 \in F_n$ and any infinite field $K$ the map
$$\w\colon G^{k+l+m+n}\rightarrow G\setminus Z(G),$$
where $w = w_1w_2w_3w_4$, is surjective;
\item[(ii)] if $\G = \SL_n$, $n>2$, then for any three non-trivial words $w_1 \in F_k, w_2 \in F_l, w_3 \in F_m$
and any infinite field $K$ the map
$$\w\colon G^{k+l+m}\rightarrow G\setminus Z(G),$$
where $w = w_1w_2w_3$, is surjective;
\item[(iii)] if  the field of real numbers $\R$ or the field of $p$-adic numbers $\mathbb Q_p$ is contained in $K$,
then for any two  non-trivial words $w_1 \in F_k, w_2\in F_l$
the map
$$\w\colon G^{k+l}\rightarrow G\setminus Z(G),$$
where $w = w_1w_2$, is surjective.
\end{itemize}
\end{theorem}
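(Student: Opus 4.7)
The overall strategy is to combine rank-one reductions with theorems on products of conjugacy classes. For each factor word $w_i$, I would first exhibit a sufficiently ``rich'' semisimple element in $\Imm \w_i \subseteq G$, and then show that any non-central $g \in G$ can be expressed as a product of appropriately many such elements. The three parts of the theorem correspond to the three settings in which this program admits two, three, or four factors.

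Step one: producing good semisimple elements in $\Imm \w_i$. Fix a regular unipotent $u \in G$ and, via the Morozov--Jacobson construction of \secref{sec:emb}, embed a rank-one subgroup $\Gamma \cong \SL_2(K)$ (or $\PSL_2(K)$) into $G$ containing $u$. Lemma \ref{lem972} applied to $w_i$ inside $\Gamma$ supplies a non-constant polynomial $\Phi_i(x,y) \in K[x,y]$ with $\Phi_i(\alpha,\beta) \in \Imm(\tr \circ \w_i\vert_{\Gamma^{d_i}})$ for all $\alpha, \beta \in K$; since $K$ is infinite, the image of $\w_i\vert_{\Gamma^{d_i}}$ meets infinitely many semisimple conjugacy classes of $\Gamma$, in particular it contains semisimple elements of arbitrarily large order. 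Propositions \ref{flat} and \ref{sharp} then promote such an element of $\Gamma$ to a regular semisimple element of the ambient group $G$, so each $\Imm \w_i$ contains a regular semisimple element and in fact infinitely many regular semisimple conjugacy classes of $G$. In the setting of (iii), where $K$ contains $\R$ or $\Q_p$, \thmref{th597} strengthens this: the image of each $\w_i\vert_{\Gamma^{d_i}}$ meets infinitely many \emph{split} semisimple classes of $\Gamma$, hence $\Imm \w_i$ contains infinitely many split regular semisimple conjugacy classes of $G$.

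Step two: writing $g \in G \setminus Z(G)$ as a product of such elements. This is a product-of-conjugacy-classes problem in the split simply connected group $G$. For (iii) I would invoke an Ellers--Gordeev type theorem asserting that the product $C_1 \cdot C_2$ of two suitably chosen split regular semisimple conjugacy classes covers $G \setminus Z(G)$; the infinite supply of such classes inside $\Imm \w_1$ and $\Imm \w_2$ furnished by \thmref{th597} lets us pick the pair that verifies the hypotheses of this theorem. For (ii), the concrete linear structure of $\SL_n$ (and in particular the rational canonical form, which reduces conjugacy to spectral data) lets one dispense with the splitness hypothesis at the cost of one extra factor: three regular semisimple conjugacy classes with generic spectra multiply to $\SL_n(K) \setminus Z$. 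For (i), four factors give enough freedom to first bring a product $g_1 g_2$ into a convenient maximal torus and then sweep the torus out by a second product $g_3 g_4$, in the spirit of the Coxeter element trick used in the proof of \thmref{thom-hls}(ii).

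The main obstacle is the conjugacy-class product step, especially in (ii) and (iii). Obtaining a single regular semisimple element in each $\Imm \w_i$ is routine given the machinery already assembled in \secref{sec:emb}. The subtle point is that the flexibility provided by Lemma \ref{lem972} and \thmref{th597} gives \emph{infinitely many but not arbitrary} classes, so one must verify that the available classes can be chosen to meet the spectral conditions of the Ellers--Gordeev type product theorem (regular, in general position, and, in case (iii), with both factors split). Carrying this out uniformly across all Cartan--Killing types of split simple $G$, and dealing separately with the exceptional types where the principal $\SL_2$ behaves atypically, is where I would expect the real work to lie.
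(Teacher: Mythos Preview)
Your approach to (ii) and (iii) is essentially the paper's. For (ii) the $\SL_2$ embedding is actually unnecessary: Borel's dominance theorem plus Zariski density of $\G(K)$ in $\G$ and openness of the regular semisimple locus already give a regular semisimple class in each $\Imm \w_i$, and then Lev's theorem finishes. For (iii) your outline matches the paper's generalization $(iii')$: Theorem~\ref{th597} supplies infinitely many \emph{split} semisimple classes in the rank-one subgroup, Proposition~\ref{sharp} promotes them to split regular semisimple classes of $G$, and then the Ellers--Gordeev result $C_1C_2\supset G\setminus Z(G)$ for two split regular semisimple classes applies.

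For (i) there is a real gap. Your Step~1 produces, via the principal $\SL_2$, a regular semisimple element in each $\Imm \w_i$, but over an arbitrary infinite field this element need \emph{not} be split; that is precisely the distinction between (i) and (iii). The Ellers--Gordeev covering theorem you want to invoke requires both classes to be split regular semisimple, and there is no off-the-shelf result saying that four arbitrary regular semisimple classes cover $G\setminus Z(G)$. Your ``Coxeter element trick'' paragraph does not explain how to manufacture splitness.

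The paper's fix is to abandon the principal $\SL_2$ for (i) and instead work inside a much larger subgroup $\Gamma=\prod_i\Gamma_i$ with each $\Gamma_i$ of type $\sA_{r_i}$, chosen so that a maximal split torus $\Delta$ of $\Gamma$ is a maximal split torus $\T$ of $\G$ (such $\Gamma$ exists by \cite{Bo1}). In each $\SL_{r_i+1}$ factor, rational canonical form implies that every regular semisimple $K$-rational class meets both $U\dot w_c$ and $\dot w_c^{-1}U$ (with $w_c$ a product of Coxeter elements of the factors). Writing an element of $\Imm \w_1|_\Gamma$ as $u_1\dot w_c$ and an element of $\Imm \w_2|_\Gamma$ as $t\dot w_c^{-1}u_2t^{-1}$ and multiplying, one gets elements conjugate to $[\dot w_c,t]\,u$ with $u\in U$; as $t$ varies over $\T(K)$ the commutator $[\dot w_c,t]$ sweeps out a Zariski-dense subset of $\T$, so for suitable $t$ it is regular in $\G$ and hence conjugate in $G$ to a \emph{split} regular semisimple element lying in $\Imm\w_1\cdot\Imm\w_2$. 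Do the same for $\w_3,\w_4$ and apply Ellers--Gordeev. The point is that the two word values are combined \emph{before} asking for splitness, and the type-$\sA$ subgroup gives access to the full split torus rather than a one-dimensional subtorus.
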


Here we give a sketch of proof which is almost the same as in \cite{HLS}.

First of all, note that $\G(K)$ is dense in $\G$ \cite[18.3]{Bo2}. Therefore $\Imm \, w_i$ contains infinitely many regular semisimple conjugacy clases
because the set of all regular semisimple elements is an open subset in $\G$ (see \cite{SS}) and $\w_i$ is a dominant map.
If we can find {\it split } regular semisimple elements $s_1 \in M_1, s_2 \in M_2$ for some sets $M_1, M_2\subset G$ invariant under conjugation,
then their conjugacy classes $C_i$ in $G$  are also contined in $M_i$. Thus,  $M_1 M_2 \supset G\setminus Z(G)$ because
$C_1C_2\supset G\setminus Z(G)$ (see \cite{EG1}).

\bigskip
\noindent
{\it Proof of (i)}.
Let $\Gamma  = \prod_i \Gamma_i$ be a  semisimple group where each simple component $\Gamma_i$  is of type $A_{r_i}$ for some $r_i$,
and let $\Delta$ be a maximal split torus of $\Gamma$.  Assume $\Gamma$ is defined and split over $K$. Let
$\o\colon \Gamma^d\rightarrow \Gamma$ be a non-trivial word map. Since this map is dominant by Borel's theorem \cite{Bo1} and the set of regular semisimple elements is open in $\Gamma$ \cite[III.1.11]{SS}, \cite[Cor.~5.4]{St1},
we have an open subset of regular semisimple elements in $\w(\Gamma^d)$. The set $\Gamma(K)$ is dense in $\Gamma$ \cite{Bo2}, therefore
we have a regular semisimple element $s\in \o (\Gamma(K)^d)$.
Every regular semisimple conjugacy class of $\Gamma(K)$ intersects the sets $U \dot w_c$ and $\dot w_c^{-1}U$ where $U  = (R_u(B))(K)$
is the group of rational points of the unipotent radical of a Borel subgroup corresponding to $\Delta$ and $w_c = \prod_i   w_{ci}$ is a product of Coxeter elements of the components $\Gamma_i$ 
(recall that $\dot w_c $ is a preimage of $w_c$ in the normalizer of the fixed maximal torus, see the proof of Theorem \ref{thom-hls}(ii)). Actually, this follows from the existence of canonical rational form in the groups $\SL_{r_i+1}(K)$; see also \cite[Section~3.8, Theorem~4(b)]{St3}  
and \cite{EG2}).
Thus, if $\o_1,  \o_2$ are word maps on $\Gamma^{d_i}$, then for every $t \in \Delta(K)$
the product
$\Imm\o_1\Imm \o_2$ contains an element of the form
$$\underbrace{u_1\dot w_c }_{\in \Imm\, \w_1}\underbrace{( t \dot w_c^{-1} u_2 t^{-1})}_{\in \Imm\, \w_2} = u_1 \underbrace{[ \dot w_c, t] }_{ \coloneqq t^*\in \Delta(K)} \underbrace{(t u_2 t^{-1})}_{\coloneqq u_2^\prime\in U} = u_1t^*u_2^\prime = u_1 (t^* \underbrace{u_2^\prime u_1}_{\coloneqq u\in U} ) u_1^{-1}.  $$
Hence for every $t \in \Delta(K)$ the set $\Imm\o_1\Imm \o_2$ contains an element of the form $t^*u$  where $t^* = [ \dot w_c, t] $ and $u \in U$. Since the map
$[\dot w_c, x]\colon \Delta\rightarrow \Delta$ is surjective, the set $[\dot w_c, \Delta(K)]$ is dense in $\Delta$.
Further, such a subgroup $\Gamma\leq \G$ exists for $\Delta = \T$ (see \cite {Bo1}).
Let $\o_1, \o_2$ be the restrictions of $\w_1, \w_2$ to $\Gamma^k, \Gamma^l$, then
$\Imm \o_1\Imm \o_2$ contains elements of the form $t^*u$ where $t^*$ runs over a dense subset of 
$\T$.
In particular, we can find a regular in $\G$ semisimple element $t^*$. Then $t^*u$ is conjugate to $t^*$, and we find an appropriate element in  $\Imm \w_1\Imm \w_2$.
The same arguments give us a split regular semisimple element in  $\Imm \w_3\Imm \w_4$. As mentioned above, the product
of the conjugacy classes of these elements contains all non-central elements of $G$, whence the result.
\qed

\bigskip
\noindent
{\it Proof of (ii)}.
By a result of A.~Lev \cite{Lev}, the product $C_1C_2 C_3$ of any three regular conjugacy classes in $G= \SL_n(K)$, $n \geq 3$, contains
$G\setminus Z(G)$. This implies (ii) because every image $\Imm\,w_i$ contains a regular conjugacy class.
\qed

\begin{remark}
The statement of (ii) can be strengthened by removing the restrictive assumption $n>2$. 
This can be achieved by replacing Lev's theorem with Lemma 6.1 of \cite{VW}, 
which guarantees that the product $S_1S_2S_3$ of any three regular {\it similarity} classes 
in $G= \SL_n(K)$, $n \geq 2$, contains $G\setminus Z(G)$. (By definition, two matrices from $\SL_n(K)$ 
are similar if they are conjugate in $\GL_n(K)$. The image of a word map is invariant under any 
automorphism, not only inner, whence the statement.)  
\end{remark}

\begin{remark}
It would be interesting to extend Lev's theorem to products of similarity classes (as in the previous remark) 
in an arbitrary Chevalley group. This would give us three word maps in (i) instead of four.
\end{remark}

We can slightly generalize statement (iii). Namely, we have

$(iii^\prime)$
{\it Let $K$ be a quadratically meagre field of characteristic zero. Then for any two  non-trivial words $w_1 \in F_k, w_2\in F_l$
the map
$$\w\colon G^{k+l}\rightarrow G\setminus Z(G),$$
where $w = w_1w_2$, is surjective.}

\bigskip

\bigskip
\noindent
{\it Proof of $(iii^\prime)$}.
We have to prove that both $\Imm \,w_1$ and $\Imm\, w_2$ contain split regular semisimple elements of $G$. The corresponding $\SL_2(K)$-embedding  allows us
to reduce the question to the following one: to prove the existence of infinitely many split semisimple elements in each of $\w_1(\SL_2(K))$ and $\w_2(\SL_2(K))$  (see the previous section).
This is exactly the statement of Theorem \ref{th597}.
\qed

\section{Polynomial maps on matrix algebras}

Looking at equations of form \eqref{eq:alg}, one can pose questions
similar to those discussed above for equation \eqref{eq:gr}.

First consider the case where solutions are sought in the matrix
algebra $\cA=\M (n,k)$. Most general results here were recently
obtained by A.~Kanel-Belov, S.~Malev, and L.~Rowen
\cite{KBMR1}--\cite{KBMR4}, \cite{Male} (see also \cite{Sp}). We are
not going to give a detailed overview referring the reader to the
papers cited above and to a survey given in \cite{KBKP}. Let us only
note that to ask a sensible question, one has to assume that the
polynomial $P$ is not identically zero on $\cA$ and, moreover, that
it is not central (i.e., not all of its values are scalar matrices).

Under this assumption, there are essentially two different
situations: either the image of $P$ contains at least one matrix
with nonzero trace, or it consists of traceless matrices. The second
case occurs, say, when $P$ is a Lie polynomial (where the Lie
bracket is given by additive commutator, $[X,Y]=XY-YX$), and all
questions about the polynomial map
\begin{equation} \label{map:matrix}
P\colon \M (n,k)^d\to\M (n,k)
\end{equation}
can be modified to the Lie-algebraic setting. Namely, for such a $P$
and any Lie algebra $\Fg$ one can consider the induced map
\begin{equation} \label{map:Lie}
P\colon \Fg^d\to\Fg.
\end{equation}
As in the preceding sections, it is reasonable to restrict our
attention to considering {\it simple} Lie algebras.

Here is a brief account of main results on the image of maps
\eqref{map:matrix} and \eqref{map:Lie}. Throughout we assume that
$P$ is not central. In the Lie algebra case, we assume that $\Fg$ is
simple and finite-dimensional. The ground field $k$ is either $\BR$
or $\BC$.

\begin{remark} \label{rem:pol}
${}$
\begin{itemize}
\item[(i)] Regardless of the topology under consideration (Zariski, complex, or real),
there are polynomials $P$ such that the image of \eqref{map:matrix}
is not dense \cite{KBMR1}, \cite{Male}.
\item[(ii)] There are Lie polynomials $P$ such that map
\eqref{map:Lie} is not surjective \cite{BGKP}.
\item[(iii)] For any Lie polynomial $P$ which is not identically
zero on $\Fsl (2,k)$ and for any split $\Fg$, map \eqref{map:Lie} is
dominant in Zariski topology (``weak infinitesimal Borel theorem'')
\cite{BGKP}.
\end{itemize}
\end{remark}

For {\it multilinear} (associative or Lie) polynomials, no examples
such as in Remark \ref{rem:pol}(i), (ii) are known. A more
optimistic conjecture attributed to Kaplansky and L'vov asserts that
in this case the image may be either $\Fsl (n,k)$ or $\M (n,k)$; see
\cite{KBMR1}--\cite{KBMR4}, \cite{Male}, \cite{Sp}, \cite{BW},
\cite{DK} for a number of results in this direction. An analogue
of the Kaplansky--L'vov conjecture can be formulated for other classical
Lie algebras, see \cite{AEV} for some partial results. The case of
multilinear Jordan polynomials on Jordan algebras is discussed in
\cite{Grdn}, \cite{MaOl}.

Eventual gaps between the behaviour of the maps under consideration
in real and complex case and with respect to different topologies
are still poorly understood. Here are several natural questions.

\begin{question} \label{q:alg}
${}$
\begin{itemize}
\item[(i)] Does there exist $P$ such that map \eqref{map:matrix} is
surjective for $k=\BR$ and is not surjective for $k=\BC$?
\item[(ii)]  Does there exist $P$ such that the image of $P\colon\M
(n,\BC)^d\to\M (n,\BC)$ is Zariski dense but is not dense in
Euclidean complex topology?
\end{itemize}
\end{question}

Note that if in (ii) one replaces ``complex'' with ``real'',
$P(X)=X^2$ provides an example where the image is Zariski dense but
is not dense in Euclidean topology (see Example \ref{ex:sl2r} and
also \cite{Male}).

In the Lie-algebraic case, one can ask about the existence of counterparts to
Thom's phenomenon, at least in some weak sense:

\begin{question} \label{q:Thom}
Do there exist a Lie polynomial $P$ and a compact simple real Lie
algebra $\Fg$ such that the image of map \eqref{map:Lie} is not
dense in Euclidean topology?
\end{question}

Finally, in parallel to problems of Waring type for word maps on groups, one
can ask similar questions for polynomial maps on Lie algebras. Even the simplest
case of the commutator map is far from being trivial.

For an element $z$ of a Lie algebra $L$ we call its bracket length the minimal number $\ell$
such that $z$ is representable in the form $z=[x_1,y_1]+\dots +[x_\ell,y_\ell]$ with $x_i,y_i\in L$.
We call the bracket width of $L$ the supremum of bracket lengths of its elements.

Let $L$ be a {\it simple} Lie algebra over a field $k$ (or a ring $R$).

\begin{question} \label{q:br}
${}$
\begin{itemize}
\item[(i)] Can the bracket width of $L$ be infinite?
\item[(ii)] Can it be greater than one?
\end{itemize}
\end{question}
A negative answer to Question \ref{q:br}(i) is obtained by Bergman--Nahlus \cite{BN} for any
finite-dimensional simple Lie algebra $L$ over any infinite field of characteristic different
from 2 and 3: the bracket width is bounded by 2 (the proof relies on recent two-generation
theorems by Bois \cite{Boi}). (Over $\R$, a simple proof can be found in \cite{HM}; see \cite{Go4} for the
case of arbitrary classical Lie algebras.)

Question \ref{q:br}(ii) is answered in the negative in each of the following cases:
(i) $L$ is a finite-dimensional simple split Lie algebra over any sufficiently large field
(G.~Brown \cite{Br}; R. Hirschb\"uhl \cite{Hi} provided improved estimates on
the size of the ground field);
(ii) $L$ is a simple real compact Lie algebra (here there are proofs by K.-H.~Neeb \cite[Appendix~3]{HM},
D.~{\v{Z}}.~\DJ okovi\'c, T.-Y. Tam \cite[Theorem~3.4]{DjTa},
D.~Akhiezer \cite{Akh}, A.~D'Andrea and A.~Maffei \cite{DAM}, J.~Malkoun and N.~Nahlus \cite{MaNa}; in \cite{Akh} some real non-compact algebras are also treated; see also the discussion at \url{math.stackexchange.com/questions/769881}).

In view of these results, the following question looks natural.

\begin{question}
What is the bracket width of Lie algebras of Cartan type (finite-dimensional
over fields of positive characteristic and infinite-dimensional over fields of characteristic
zero)?
\end{question}

Another rich source of simple infinite-dimensional Lie algebras (algebras of vector fields on smooth affine varieties)
was discussed in \cite{BiFut}. It is a challenging question whether among these algebras one can find those with
bracket width greater than one.

\section{Miscellanea}

To conclude, we present several remarks and questions related to the
topic of the present paper. In most cases they refer to situations
which are almost totally unexplored.

\subsection{Word maps in Kac--Moody setting} In the case where a simple
algebraic group $\cG$ under consideration is defined over the field
$K=\C((t))$ of formal Laurent series with complex coefficients, naturally
leads to affine Kac--Moody groups. Various ramifications of this set-up, both
for word maps on Kac--Moody groups and polynomial maps on Kac--Moody algebras,
are surveyed in \cite{KKMP}.

\subsection{Systems of equations} It seems very problematic to go
over from maps \eqref{map:gr}, \eqref{map:matrix}, \eqref{map:Lie}
to more general ones $G^d\to G^k$, $\cA^d\to\cA^k$, $\Fg^d\to\Fg^k$
(in other words, from equations to systems of equations). Some
particular cases were treated by N.~Gordeev and U.~Rehmann
\cite{GoRe}, and by E.~Breuillard, B.~Green, R.~Guralnick and T.~Tao
\cite{BGGT}. A promising general approach was recently proposed by
K.~Bou-Rabee and M.~Larsen \cite{BRL}.

\subsection{Equidistribution problems} One can ask how the set of
solutions of \eqref{eq:gr} or \eqref{eq:alg} depends on the
right-hand side. In other words, one can study the behaviour of the
fibres of maps \eqref{map:gr}, \eqref{map:matrix}, \eqref{map:Lie}.
The authors are not aware of anything done in this direction, in
contrast to the case of finite groups where a number of equidistribution
results are available, see, e.g., \cite{AV}, \cite{BK}, \cite{Bors},
\cite{GaSh}, \cite{KuSi}, \cite{LP},  \cite{LS2}, \cite{LS3},
\cite{Na}, \cite{PS}, \cite{Pl}; in \cite{LS4} Larsen and Shalev consider equidistribution problems
for profinite and residually finite groups. Here probabilistic aspects of the theory
naturally arise. We are not going to discuss this rich topic. The interested reader can find a survey in \cite{Sh3}.

\subsection{Functional-analytical analogues}
In the border-extending spirit of Remarks \ref{rem:dif} and
\ref{rem:Cr}, one can try to investigate polynomial maps on certain
operator algebras, particularly on those for which additive
commutator is known to behave well (for example, inducing a
surjective map); see, e.g., \cite{DS}, \cite{Ng}, \cite{KNZ},
\cite{KLT}.

\subsection{Word image and anti-automorphisms} \label{sec1.1}
We start with some general (and almost obvious) remarks regarding
${\Aut}(F_d)$- and $\Aut(G)$-invariance of the image of a word map $w\colon
G^d\to G$ on an abstract group $G$.

First, evidently $\Imm \w$ is an $\Aut (G)$-invariant subset of $G$.

Second, if $w_1, w_2\in F_d$ lie in the same $\Aut (F_d)$-orbit,
then the maps
$\w_1, \w_2 \colon G^n\rightarrow G$ have the same image.

Indeed, any group homomorphism $\varphi\colon F_d\to G$ is determined by
the $d$-tuple $(g_1=\varphi (x_1),\dots , g_d=\varphi (x_d))$. Since for
any $w\in F_d$ we have $\varphi(w)=\w(g_1,\dots ,g_d)$, the image of $\w$ coincides with the
set $\{\varphi(w)\}_{\varphi\in\Hom(F_d,G)}$, whence the claim.

The situation becomes much less obvious as soon as we consider anti-automorphisms
instead of automorphisms. There are several ways to formalize eventual difference between the
images of corresponding word maps. Here are two possibilities.

\begin{definition}
Let $\gamma$ be an anti-automorphism of $F_d$, and let $w\in F_d$. Denote $w^{\gamma}=\gamma (w)$ and define,
for every group $G$, $\w^{\gamma}\colon G^d\to G$ to be the evaluation map, as above. We say that $w$ is
{\em $\gamma$-chiral} if there exists $G$ such that the images of $\w$ and $\w^{\gamma}$ are different.
\end{definition}

\begin{definition}
Let $G$ be a group, and let $\gamma$ be an anti-automorphism of $G$. Define, for every $w\in F_d$,
$\w_{\gamma}\colon G^d\to G$ by $\w_{\gamma}(g_1,\dots , g_d)=\gamma(w(g_1, \dots , g_d))$. We say that $G$ is
{\em $\gamma$-chiral} if there exists $w$ such that the images of $\w$ and $\w_{\gamma}$ are different.
\end{definition}

In both cases, we say that the pair $(w,G)$ is $\gamma$-chiral (otherwise, we say that it is {\it $\gamma$-achiral}).
We omit $\gamma$ in prefixes and sub(super)-scripts
whenever the anti-automorphism is fixed and this does not lead to any confusion.

Perhaps, the simplest non-trivial case where one can observe the chirality phenomenon arises when $\gamma$ acts
on any group $G$ (including $F_d$) by inverting all its elements, $\gamma (g)=g^{-1}$. In such a case, $w^{\gamma}=w_{\gamma}$
for any $G$ and any $w$.

\begin{prop} \cite{CH}
If $\gamma$ acts by inversion, there are $\gamma$-chiral pairs $(w,G)$.
\end{prop}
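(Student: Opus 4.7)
Because $\gamma$ is inversion on both $F_d$ and $G$, a direct unwinding of the definitions shows
\[
\w^{\gamma}(g_1,\dots,g_d)=\w_{\gamma}(g_1,\dots,g_d)=\w(g_1,\dots,g_d)^{-1},
\]
so that $\Imm(\w^{\gamma})=\Imm(\w_{\gamma})=(\Imm\,\w)^{-1}$. Consequently, the pair $(w,G)$ is $\gamma$-chiral if and only if the set $\Imm\,\w\subseteq G$ is \emph{not} closed under inversion, and the proposition reduces to exhibiting one such pair.

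The set $\Imm\,\w$ is always $\Aut(G)$-stable and contains $1$, so we are looking for an $\Aut(G)$-stable subset $A\ni 1$ with $A\ne A^{-1}$ that can be realized as the image of some word map. The natural candidate is
\[
A=\{1\}\cup\bigl(\Aut(G)\cdot g\bigr),
\]
where $g\in G$ is chosen so that $g^{-1}\notin\Aut(G)\cdot g$; equivalently, no automorphism of $G$ sends $g$ to its inverse. Finite non-abelian simple groups (as well as suitable finite $p$-groups of nilpotency class two) admitting such an element $g$ are plentiful, as can be read off from character-table data: for instance, in many $\PSL_n(\mathbb{F}_q)$ with $n\ge 3$ there are semisimple conjugacy classes whose $\Aut(G)$-orbit is fixed setwise by no automorphism carrying $g$ to $g^{-1}$.

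Once such a $(G,g)$ is fixed, I would invoke Lubotzky's theorem (Theorem~\ref{th:F}(i)), which produces a word $w\in F_2$ with $\Imm\,\w$ equal to any prescribed $\Aut(G)$-invariant set $A$ containing $1$. Applying this to the set $A$ above yields $w$ such that $g\in\Imm\,\w$ while $g^{-1}\notin\Imm\,\w$, which is exactly the required chirality. The main obstacle is the existence step in the previous paragraph: producing an explicit simple group together with a non-ambivalent conjugacy class whose $\Aut(G)$-orbit also avoids the inverse element. I expect this to be straightforward from a direct inspection of character tables for a small candidate $G$. Alternatively (and this is presumably the path of \cite{CH}), one can bypass Lubotzky's theorem altogether by computing the image of an explicit short word in a carefully chosen small soluble or $p$-group where the asymmetry of $\Imm\,\w$ can be verified by hand.
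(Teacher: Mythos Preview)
Your approach is essentially the one the paper sketches: reduce chirality to $\Imm\,\w\ne(\Imm\,\w)^{-1}$, then realize a suitable $\Aut(G)$-invariant set $A\ni 1$ with $A\ne A^{-1}$ as a word image via Lubotzky's theorem. The paper (following \cite{CH}) makes the existence step cleaner than you do by choosing a finite simple $G$ with $\Aut(G)=\mathrm{Inn}(G)$ (e.g.\ $M_{11}$); then the condition ``$g^{-1}\notin\Aut(G)\cdot g$'' is just ``$g$ is not conjugate to $g^{-1}$'', i.e.\ $G$ has a non-real conjugacy class, which is immediate from the character table.

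Your suggested source of examples, $\PSL_n(\mathbb F_q)$ with $n\ge 3$, is the one place you should be more careful: the graph automorphism $g\mapsto (g^{T})^{-1}$ sends the conjugacy class of $g$ to that of $g^{-1}$, so the $\Aut(G)$-orbit of a semisimple element typically \emph{does} contain its inverse there. Either switch to a simple group with no outer automorphisms, as the paper does, or give an explicit check that your chosen class survives the full automorphism group.
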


\begin{remark}
The simplest way to prove the proposition, demonstrated in \cite{CH},
is to combine a theorem of Lubotzky \cite{Lu} (see Theorem \ref{th:F}(i))
with the fact that there are finite simple groups all of whose automorphisms are inner
which contain an element $g$
not conjugate to its inverse. The resulting pair $(w,G)$ is then a chiral pair because the image of
$\w$ which coincides with the conjugacy class of such an element $g$ cannot contain $g^{-1}$, which
is in the image of $\w_{\gamma}$.

However, it is not easy to give an {\it explicit} example of a chiral pair: say, for the Mathieu group $G=M_{11}$ and $g\in G$ an element of order
11, one can expect $w$ of length about $1.7\cdot 10^{244552995}$, see \cite{MO2}.

Here is another way to formalize asymmetry phenomena of this flavour, which is inspired by the \url{mathoverflow} discussion cited above.
For any word map $\w\colon G^d\to G$ and any $a\in G$ we denote by $\w_a=\{(g_1,\dots ,g_n)\,\, \mid \,\, w (g_1,\dots ,g_d)=a\}$ the fibre of
$\w$ at $a$.
We restrict our attention to considering anti-automorphisms of finite groups.
\end{remark}

\begin{definition}
Let $G$ be a finite group equipped with an anti-automor\-phism $\gamma$. We say that $G$ is {\em weakly $\gamma$-chiral} if
there exist $g\in G$ and $w\in F_d$ such that the fibres $\w_g$ and $(\w_{\gamma})_g$ are of distinct cardinalities.
In such a case, we say that $(w,G)$ is a weakly $\gamma$-chiral pair.
\end{definition}

Clearly, every $\gamma$-chiral finite group is weakly $\gamma$-chiral.
It turns out that to detect weak chirality, much shorter words $w$ can be used that can be exhibited explicitly.

\begin{example}
(N. Elkies \cite{MO2})

For $a\in G=M_{11}$ an element of order 11 and $w=x^4y^2xy^3$ the fibres $\w_a$ and $\w_{a^{-1}}$ are of cardinalities 7491 and 7458,
respectively. So $(w,G)$ is a weakly $\gamma$-chiral pair where $\gamma$ stands for the inversion map.
\end{example}

\begin{question}
Does there exist a finite group $G$ equipped with an anti-automorphism $\gamma$
which is $\gamma$-achiral but weakly $\gamma$-chiral?
\end{question}

\begin{remark}
In a somewhat similar spirit, R.~Guralnick and P.~Shumyatsky \cite{GuSh}
considered words $w$ for which the equations $w(x_1,\dots ,x_d)=g$
and $w(x_1,\dots ,x_d)=g^e$ are equivalent for all $e$ (or all $e$ prime to
the order of $G$), in the sense of the existence of a solution or the number of
solutions.
Not too much is known about the invariance of $\Imm \w$ with respect to other operations
on $F_d$ and $G$.
It would be interesting to divide words into equivalence classes with respect to certain invariance properties of  $\Imm \w$ for a given group $G$.
\end{remark}

\subsection{Word maps with constants} \label{sec:const}
One of the most natural generalizations of the problems considered in the
present paper is the following one. Let $F_d$ ($d\ge 1)$ be the free group
on generators $x_1,\dots ,x_d$, let $G$ be an abstract group, and let
$G\ast F_d$ denote the free product. Then to every $w_\Sigma\in G\ast F_d$ one can
associate the word map with constants
\begin{equation} \label{mapconst}
\w_\Sigma\colon G^d\to G
\end{equation}
defined by evaluation, exactly as for genuine word maps.
For the resulting equations with constants of the form
$$
w_1(x_1,\dots ,x_d)\sigma_1\cdots
w_r(x_1,\dots ,x_d)\sigma_rw_{r+1}(x_1,\dots ,x_d)=g
$$
one can pose the same questions as those discussed in the present paper
for word maps without constants.
In particular, one can ask about the surjectivity or dominance of map
\eqref{mapconst}, about the size and structure of its image, etc. These
topics are almost unexplored and, in our opinion, definitely deserve
thorough investigation. Being interesting in its own right, say, in view of
natural connections with classical group-theoretic problems such as Thompson's conjecture and computing covering numbers (see, e.g., \cite{Go1}), information on
the properties of equations with constants can be useful for treating genuine
word equations; relevant examples can be found in \cite{GKP1}, \cite{GKP2}, \cite{KT}. Here we only quote several results from these papers. Recall
that as mentioned in Introduction, we limit ourselves to considering equations
{\it in} groups but not {\it over} groups.

Following \cite{KT}, we denote by $\e\colon G\ast F_d\to F_d$ the augmentation map,
sending all elements of $G$ to 1. 
If $\e(w_\Sigma)=1$, we say that $w_\Sigma$ is singular.

With this notation, we have the following facts:

\begin{itemize}
\item[(i)] If $G=\Un (n)$, $d=1$, and a word with constants $w_\Sigma$ is non-singular,
then the map $\w_\Sigma\colon G\to G$ is surjective \cite{GeRo}.
\item[(ii)] If $p$ is a prime number, $G=\SU (p)$, $d=2$, and $\e(w_\Sigma)$ does not belong to
$[F_2,F_2]^p [F_2,[F_2,F_2]]$ (the second step of the exponent-$p$ central series),
then the map $\w_\Sigma\colon G\to G$ is surjective.
\item[(iii)] If $G$ is (the group of points of) a simple linear algebraic group defined over an algebraically closed field, $w_\Sigma=w_1\sigma_1\cdots w_r\sigma_rw_{r+1}$ is a non-singular word with $w_2,\dots ,w_{r+1}\ne 1$ and
``general'' $\sigma_1,\dots ,\sigma_r$, then the map $\w_\Sigma\colon G\to G$
is dominant \cite{GKP2} (see there a precise definition of a general $r$-tuple).
\end{itemize}

Note that the methods used to prove these statements are entirely different:
(i) relies on a purely homotopic approach (the Hopf degree theorem), (ii) needs
much more advanced techniques from homological algebra, and (iii) is based on
algebraic-geometric arguments.

How far can one hope to go trying to generalize these surjectivity and dominance
results? There are some immediate limitations: say, there are simple algebraic
groups and words with constants such that the image of map \eqref{mapconst}
collapses to 1 (so-called group identities with constants, see, e.g., \cite{Go2}).
The word $w_{\Sigma}(x)=\sigma^{-1}x\sigma$ gives rise to an example of
a map \eqref{mapconst} whose image consists of a single conjugacy class of $G$.
So far, the most optimistic approach consists in parameterization of the image of
\eqref{mapconst} using the quotient map $\pi\colon G\to T/W$, where $T$ is
a maximal torus of $G$ and $W$ is the Weyl group (see \cite{SS}). Namely, one can show (see \cite{GKP3}) that if the composed map
$\pi\circ \w_\Sigma\colon G^d\rightarrow T/W$ is dominant,
then so is the word map with constants $\w^\prime_\Sigma\colon G^{d+1}\rightarrow G$ corresponding to $w^\prime_\Sigma = yw_\Sigma y^{-1}$. Thus in such a case
the map $\w_\Sigma$ is ``dominant up to conjugacy'', or, in other words,
almost all conjugacy classes of $G$ (except for some closed subset of $G$)  intersect  $\Imm \w$.
So our biggest hope is the
dichotomy which will arise if the following question (see \cite{GKP3}) is answered in the affirmative.

\begin{quest}
Is it true that
$\Imm (\pi\circ \w(x_1, \dots, x_d, \sigma_1, \dots, \sigma_r))$ is either just one point for every $\Sigma = (\sigma_1, \dots, \sigma_r) \in G^r$,
or a dense subset in $T/W$ for every $\Sigma = (\sigma_1, \dots, \sigma_r) \in U$ from some non-empty open set $U\subset G^r$?
\end{quest}


\noindent{\it Acknowledgements.} 
Discussions and correspondence with T.~Bandman, J.~Blanc,
S.~Malev, L.~Polterovich, A.~Rapinchuk, and E.~Shustin are gratefully
appreciated.


\begin{thebibliography}{GLOST}


\bibitem[ABRdS]{ABRdS}
M. Aka, E. Breuillard, L. Rosenzweig, N. de Saxc\'e,
{\it Diophantine properties of nilpotent Lie groups},
Compos. Math. {\bf 151} (2015), 1157--1188.

\bibitem[Akh]{Akh}
D.~Akhiezer,
{\it On the commutator map for real semisimple Lie algebras},
Moscow Math. J. {\bf 15} (2015), 609--613.

\bibitem[AV]{AV}
A. Amit, U. Vishne,
{\it Characters and solutions to equations in finite groups},
J. Algebra Appl. {\bf 10} (2011), 675--686.

\bibitem[AEV]{AEV}
B. E. Anzis, Z. M. Emrich, K. G. Valiveti,
{\it On the images of Lie polynomials evaluated on Lie algebras},
Linear Algebra Appl. {\bf 469} (2015), 51--75.


\bibitem[AGKS]{AGKS}
N. Avni, T. Gelander, M. Kassabov, A. Shalev,
{\it Word values in $p$-adic and adelic groups},
Bull. London Math. Soc. {\bf 45} (2013), 1323--1330.

\bibitem[BGG]{BGG}
T.~Bandman, S. Garion, F. Grunewald,
{\it On the surjectivity of Engel words on $\PSL(2,q)$},
Groups Geom. Dyn. {\bf 6} (2012), 409--439.

\bibitem[BGK]{BGK}
T.~Bandman, S. Garion, B.~Kunyavski\u\i ,
{\it Equations in simple matrix groups: algebra, geometry, arithmetic, dynamics},
Central Eur. J. Math. {\bf 12} (2014), 175--211.

\bibitem[BGKP]{BGKP}
T.~Bandman, N. Gordeev, B.~Kunyavski\u\i , E.~Plotkin,
{\it Equations in simple Lie algebras},
J. Algebra {\bf 355} (2012), 67--79.


\bibitem[BK]{BK}
T. Bandman, B. Kunyavski\u\i ,
{\it Criteria for equidistribution of solutions of word equations on $\SL(2)$},
J. Algebra {\bf 382} (2013), 282--302.

\bibitem[BaZa]{BaZa}
T. Bandman, Yu. G. Zarhin,
{\it Surjectivity of certain word maps on $PSL(2, \mathbb C)$ and $SL(2, \mathbb C)$},
Eur. J. Math. {\bf 2} (2016), 614--643.

\bibitem[BG]{BG}
J. Barge, E. Ghys,
{\it Cocycles d'Euler et de Maslov},
Math. Ann. {\bf 294} (1992), 235--265.

\bibitem[BN]{BN}
G. M. Bergman, N. Nahlus,
{\it Homomorphisms on infinite direct product algebras, especially Lie algebras},
J. Algebra {\bf 333} (2011), 67--104.

\bibitem[BiFut]{BiFut}
Y. Billig, V. Futorny,
{\it Lie algebras of vector fields on smooth affine varieties},
Comm. Algebra {\bf 46} (2018), 3413--3429. 

\bibitem[Bl]{Bl}
J. Blanc,
{\em Groupes de Cremona, connexit\'e et simplicit\'e},
Ann. Sci. \'Ec. Norm. Sup\'er. {\bf 43} (2010), 357--364.

\bibitem[BlFur]{BlFur}
J. Blanc, J.-P. Furter,
{\em Topologies and structures of the Cremona groups},
Ann. Math. {\bf 178} (2013), 1173--1198.


\bibitem[BlZi]{BlZi}
J. Blanc, S. Zimmermann,
{\em Topological simplicity of the Cremona groups},
\url{arXiv:1511.08907}, to appear in Amer. J. Math.

\bibitem[Boi]{Boi}
J.-M. Bois,
{\it Generators of simple Lie algebras in arbitrary characteristics},
Math. Z. {\bf 262} (2009), 715--741.

\bibitem[Bo1]{Bo1}
A. Borel,
{\it On free subgroups of semisimple groups},
Enseign. Math. {\bf 29} (1983), 151--164; reproduced in {\OE}uvres - Collected Papers,
vol.~IV, Springer-Verlag, Berlin--Heidelberg, 2001, pp.~41--54.

\bibitem[Bo2]{Bo2}
A. Borel,
{\it Linear Algebraic Groups},
Graduate Texts in Math., 2nd ed., Springer-Verlag, 1991.

\bibitem[Bo3]{Bo3}
A. Borel,
{\it Class functions, conjugacy classes and commutators in semisimple Lie groups},
in: ``Algebraic Groups and Lie Groups'' (G.~Lehrer {\it et al.}, Eds.),
Austral. Math. Soc. Lect. Ser., vol.~9, Cambridge Univ. Press, Cambridge, 1997, pp.~1--19.


\bibitem[Bors]{Bors}
A. Bors,
{\it Fibers of word maps and the multiplicities of nonabelian composition factors},
Internat. J. Algebra Comput. {\bf 27} (2017), 1121--1148.

\bibitem[BRL]{BRL}
K. Bou-Rabee, M. Larsen,
{\it Linear groups with Borel's property},
J. Europ. Math. Soc. {\bf 19} (2017), 1293--1330.

\bibitem[Bou]{Bou}
N. Bourbaki,
\'El\'ements de math\'ematiques. Groupes et alg\`ebres de Lie,
Chap.~IV, V, VI, 2\`eme \'ed., Masson, Paris, 1981.


\bibitem[BGGT]{BGGT}
E. Breuillard, B. Green, R. Guralnick, T. Tao,
{\it Strongly dense free subgroups of semisimple algebraic groups},
Israel J. Math. {\bf 192} (2012), 347--379.

\bibitem[Br]{Br}
G.~Brown,
{\it On commutators in a simple Lie algebra},
Proc. Amer. Math. Soc. {\bf 14} (1963), 763--767.

\bibitem[BIP]{BIP}
D. Burago, S. Ivanov, L. Polterovich,
{\it Conjugation-invariant norms on groups of geometric origin},
in: ``Groups of Diffeomorphisms'' (R.~Penner {\it et al.}, Eds.),
Adv. Stud. Pure Math., vol.~52, Math. Soc. Japan, Tokyo, 2008, pp.~221--250.

\bibitem[BW]{BW}
D. Buzinski, R. Winstanley,
{\it On multilinear polynomials in four variables evaluated on matrices},
Linear Algebra Appl. {\bf 439} (2013), 2712--2719.

\bibitem[CZ]{CZ}
D. Calegari, D. Zhuang,
{\it Stable $W$-length},
in: ``Topology and Geometry in Dimension Three'' (W.~Li {\it et al.}, Eds.),
Contemp. Math., vol.~560, Amer. Math. Soc., Providence, RI, 2011, pp.~145--169.

\bibitem[CL]{CL}
S. Cantat, S. Lamy (with an appendix by Y.~de Cornulier),
{\em Normal subgroups in the Cremona group},
Acta Math. {\bf 210} (2013), 31--94.

\bibitem[CF]{CF}
P.-E. Caprace, K. Fujiwara,
{\it Rank-one isometries of buildings and quasi-morphisms of Kac--Moody groups},
Geom. Funct. Anal. {\bf 19} (2010), 1296--1319.

\bibitem[Ch1]{Ch1}
P. Chatterjee,
{\it On the surjectivity of the power maps of algebraic groups in characteristic zero},
Math. Res. Lett. {\bf 9} (2002), 741--756.

\bibitem[Ch2]{Ch2}
P. Chatterjee,
{\it On the surjectivity of the power maps of semisimple algebraic groups},
Math. Res. Lett. {\bf 10} (2003), 625--633.

\bibitem[Ch3]{Ch3}
P. Chatterjee,
{\it On the power maps, orders and exponentiality of $p$-adic algebraic groups},
J. reine angew. Math. {\bf 629} (2009), 201--220.

\bibitem[Ch4]{Ch4}
P. Chatterjee,
{\it Surjectivity of power maps of real algebraic groups},
Adv. Math. {\bf 226} (2011), 4639--4666.

\bibitem[CH]{CH}
W. Cocke, M-c. Ho,
{\it On the symmetry of images of word maps in groups},
\url{arXiv:1701.05947}, to appear in Comm. Algebra.

\bibitem[Co]{Co}
B. Conrad, {\it Reductive group schemes}, in: ``Autour des sch\'emas en groupes'',
vol.~I, Panor. Synth\`eses {\bf 42/43}, Soc. Math. France, Paris, 2014, pp.~93--444.

\bibitem[DAM]{DAM}
A. D'Andrea, A. Maffei,
{\it Commutators of small elements in compact semisimple groups and Lie algebras},
J. Lie Theory {\bf 26} (2016), 683--690.

\bibitem[Do]{Do}
S. K. Donaldson,
{\it Lectures on Lie groups and geometry},
course notes available at
\url{http://www2.imperial.ac.uk/~skdona/LIEGROUPSCONSOL.PDF}.

\bibitem[DuTh]{DuTh}
N. M. Dunfield, W. P. Thurston,
{\it Finite covers of random 3-manifolds},
Invent. Math. {\bf 166} (2006), 457--521.

\bibitem[DK]{DK}
K. J. Dykema, I. Klep,
{\it Instances of the Kaplansky--Lvov multilinear conjecture for polynomials of degree three},  Linear Algebra Appl. {\bf 508} (2016), 272--288.

\bibitem[DS]{DS}
K. Dykema, A. Skripka,
{\it On single commutators in $II_1$-factors},
Proc. Amer. Math. Soc. {\bf 140} (2012), 931--940.

\bibitem[DH]{DH}
D. {\v{Z}}. \DJ okovi\'c, K. H. Hofmann,
{\it The surjectivity question for the exponential function of real Lie groups: a status report},
J. Lie Theory {\bf 7} (1997), 171--199.

\bibitem[DjTa]{DjTa}
D. {\v{Z}}. \DJ okovi\'c, T.-Y. Tam,
{\it Some questions about semisimple
Lie groups originating in matrix theory},
Canad. J. Math. {\bf 46} (2003), 332--343.

\bibitem[DjTh]{DjTh}
D. {\v{Z}}. \DJ okovi\'c, N. Q. Th\u{a}\'ng,
{\it On the exponential map of almost simple real algebraic groups},
J. Lie Theory {\bf 5} (1995), 275--291.


\bibitem[ET1]{ET1}
A. Elkasapy, A. Thom,
{\it About Got\^o's method showing surjectivity of word maps},
Indiana Univ. Math. J. {\bf 63} (2014), 1553--1565.

\bibitem[ET2]{ET2}
A. Elkasapy, A. Thom,
{\it On the length of the shortest non-trivial element in the derived and the lower central series},
J. Group Theory {\bf 18} (2015), 793--804.

\bibitem[EG1]{EG1}
E. W. Ellers, N. Gordeev,
{\it  On the conjectures of J. Thompson and O. Ore},
Trans. Amer. Math. Soc. {\bf 350} (1998), 3657--3671.

\bibitem[EG2]{EG2}
E. W. Ellers, N. Gordeev,
{\it Intersection of conjugacy classes with Bruhat cells in Chevalley groups},
Pacific J. Math. {\bf 214} (2004), 245--261.


\bibitem[FT]{FT}
E. Fink, A. Thom,
{\it Palindromic words in simple groups},
Internat. J. Algebra Comput. {\bf 25} (2015), 439--444.

\bibitem[GKSV]{GKSV}
A. Galt, A. Kulshretsha, A. Singh, E. Vdovin,
{\it On Shalev's conjecture for type $A_n$ and ${}^2A_n$},
\url{arXiv:1805.04638}.


\bibitem[GaGh]{GaGh}
J.-M. Gambaudo, E. Ghys,
{\it Commutators and diffeomorphisms of surfaces},
Ergodic Theory Dynam. Systems {\bf 24} (2004), 1591--1617.

\bibitem[GaSh]{GaSh}
S. Garion, A. Shalev,
{\it Commutator maps, measure preservation, and $T$-systems},
Trans. Amer. Math. Soc. {\bf 361} (2009), 4631--4651.

\bibitem[Ge]{Ge}
S. Gelfand,
{\it On the number of solutions of a quadratic equation},
in: ``Globus: general math. seminar'', no.~1 (M.~A.~Tsfasman,
V.~V.~Prasolov, Eds.), Independent Univ. of Moscow, Moscow, 2004,
pp.~124--133 (Russian).

\bibitem[GeRo]{GeRo}
M. Gerstenhaber, O. S. Rothaus,
{\it The solution of sets of equations in groups},
Proc. Nat. Acad. Sci. U.S.A. {\bf 48} (1962), 1531--1533.

\bibitem[Go1]{Go1}
N. Gordeev,
{\it Products of conjugacy classes in algebraic groups},
J. Algebra {\bf 173} (1995), 715--744.

\bibitem[Go2]{Go2}
N. Gordeev,
{\it Freedom in conjugacy classes of simple algebraic groups and identities with constants},
Algebra i Analiz {\bf 9} (1997), no.~4, 63--78; English transl. in St. Petersburg Math. J. {\bf 9} (1998), 709--723.

\bibitem[Go3]{Go3}
N. Gordeev,
{\it Products of conjugacy classes in perfect linear groups. Extended covering number},
Zapiski Nauchn. Sem. POMI {\bf 321} (2005), 67--89;
English transl. in J. Math. Sci. (N.Y.) {\bf 136} (2006), 3867--3879.

\bibitem[Go4]{Go4}
N.~Gordeev,
{\it Sums of orbits of algebraic groups I},
J. Algebra {\bf 295} (2006), 62--80.

\bibitem[Go5]{Go5}
N. Gordeev,
{\it On Engel words on simple algebraic groups},
J. Algebra {\bf 425} (2015), 215--244.


\bibitem[GKP1]{GKP1}
N. L. Gordeev, B. E. Kunyavskii, E. B. Plotkin,
{\it Word maps and word maps with constants of simple algebraic groups},
Doklady Akad. Nauk {\bf 471} (2016), no.~2, 136--138; English transl. in
Doklady Math. {\bf 94} (2016), 632--634.

\bibitem[GKP2]{GKP2}
N. Gordeev, B. Kunyavskii, E. Plotkin,
{\it Word maps, word maps with constants and representation varieties of one-relator groups},
J. Algebra {\bf 500} (2018), 390--424.

\bibitem[GKP3]{GKP3}
N. Gordeev, B. Kunyavskii, E. Plotkin,
{\it Word maps on perfect algebraic groups},
\url{arXiv:1801:00381}, to appear in Intern. J. Algebra Comput.

\bibitem[GoRe]{GoRe}
N. Gordeev, U. Rehmann,
{\it On multicommutators for simple algebraic groups},
J. Algebra {\bf 245} (2001), 275--296.

\bibitem[GoSa]{GoSa}
N. Gordeev, J. Saxl,
{\it Products of conjugacy classes in Chevalley groups over local rings},
Algebra i Analiz {\bf 17} (2005), no.~2, 96--107; English transl. in
St. Petersburg Math. J. {\bf 17} (2006), 285--293.


\bibitem[Grdn]{Grdn}
S. R. Gordon,
{\it Associators in simple algebras},
Pacific J. Math. {\bf 51} (1974), 131--141.


\bibitem[GoGr]{GoGr}
M. Goto, F. D. Grosshans,
{\it Semisimple Lie Algebras},
Lecture Notes Pure Appl. Math., vol.~38, Marcel Dekker, INC., New York--Basel, 1978.

\bibitem[Gr1]{Gr1}
M. Gromov,
{\it Hyperbolic groups},
in: ``Essays in group theory'', Math. Sci. Res. Inst. Publ., vol.~8, Springer, New York, 1987, pp.~75--263.

\bibitem[Gr2]{Gr2}
M. Gromov,
{\it Random walk in random groups},
Geom. Funct. Anal. {\bf 13} (2003), 73--146.

\bibitem[GK]{GK}
R. M. Guralnick, W. Kantor,
{\it Probabilistic generation of finite simple groups},
J. Algebra {\bf 234} (2000), 743--792.


\bibitem[GLOST]{GLOST}
R. Guralnick, M. W. Liebeck, E. A. O${}^{\prime}$Brien, A. Shalev, P. H. Tiep,
{\it Surjective word maps and Burnside's $p^aq^b$-theorem},
\url{arXiv:1505.00718}, to appear in Invent. Math.

\bibitem[GuSh]{GuSh}
R. Guralnick, P. Shumyatsky,
{\it On rational and concise words},
J. Algebra {\bf 429} (2015), 213--217.


\bibitem[GT]{GT}
R. Guralnick, P. H. Tiep,
{\it Effective results on the Waring problem for finite simple groups},
Amer. J. Math. {\bf 137} (2015), 1401--1430.

\bibitem[JLO]{JLO}
S. Jambor, M. W. Liebeck, E. A. O${}^{\prime}$Brien,
{\it Some word maps that are non-surjective on infinitely many finite simple groups},
Bull. London Math. Soc. {\bf 45} (2013), 907--910.

\bibitem[JL]{JL}
M. Jarden, A. Lubotzky,
{\it Random normal subgroups of free profinite groups},
J. Group Theory {\bf 2} (2006), 213--22.

\bibitem[Hal]{Hal}
B. C. Hall,
{\it Lie Groups, Lie Algebras, and Representations.
An Elementary Introduction}, 2nd ed.,
Graduate Texts in Math., vol.~222, Springer, Cham, 2015.

\bibitem[Hat]{Hat}
A. Hatcher,
{\it Algebraic Topology},
Cambridge Univ. Press, Cambridge, 2002.

\bibitem[Hi]{Hi}
R. Hirschb\"uhl,
{\it Commutators in classical Lie algebras},
Linear Algebra Appl. {\bf 142} (1990), 91--111.

\bibitem[Ho]{Ho}
G. P. Hochschild, {\it Basic Theory of Algebraic Groups and Lie Algebras},
Graduate Texts in Math., vol.~75, Springer-Verlag, Berlin--Heidelberg--New York, 1981.

\bibitem[HL]{HL}
K. H. Hofmann, J. D. Lawson,
{\it Divisible subsemigroups of Lie groups},
J. London Math. Soc. {\bf 27} (1983), 427--434.

\bibitem[HM]{HM}
K. H. Hofmann, S. A. Morris,
{\it The Lie Theory of Connected Pro-Lie Groups. A Structure Theory for Pro-Lie Algebras, Pro-Lie Groups, and Connected Locally Compact Groups},
EMS Tracts Math., vol.~2, Eur. Math. Soc., Z\"urich, 2007.

\bibitem[HR]{HR}
K. H. Hofmann, W. A. Ruppert,
{\it Lie groups and subsemigroups with surjective exponential function},
Mem. Amer. Math. Soc. {\bf 130} (1997), no.~618.


\bibitem[HLS]{HLS}
C. Y. Hui, M. Larsen, A. Shalev,
{\it The Waring problem for Lie groups and Chevalley groups},
Israel J. Math. {\bf 210} (2015), 81--100.

\bibitem[Hu]{Hu}
J. E. Humphreys,
{\it Conjugacy Classes in Semisimple Algebraic Groups},
Math. Surveys and Monographs, vol.~43, Amer. Math. Soc.,
Providence, RI, 1995.


\bibitem[JZ]{JZ}
A. Jaikin-Zapirain,
{\it On the verbal width of finitely generated pro-$p$ groups},
Rev. Mat. Iberoam. {\bf 24} (2008), 617--630.

\bibitem[KLT]{KLT}
R. Kadison, Z. Liu, A. Thom,
{\it A note on commutators in algebras of unbounded operators},
preprint, available at \url{https://tu-dresden.de/mn/math/geometrie/thom/forschung/publikationen}.

\bibitem[KNZ]{KNZ}
V. Kaftal, P. W. Ng, S. Zhang,
{\it Commutators and linear spans of projections in certain finite $C^*$-algebras},
J. Funct. Anal. {\bf 266} (2014), 1883--1912.

\bibitem[KBKP]{KBKP}
A. Kanel-Belov, B. Kunyavski\u\i , E. Plotkin,
{\it Word equations in simple groups and polynomial equations in simple algebras},
Vestnik St. Petersburg Univ. Math. {\bf 46} (2013), no.~1, 3--13.

\bibitem[KBMR1]{KBMR1}
A. Kanel-Belov, S. Malev, L. Rowen,
{\it The images of non-commutative polynomials evaluated on $2\times 2$ matrices},
Proc. Amer. Math. Soc. {\bf 140} (2012), 465--478.

\bibitem[KBMR2]{KBMR2}
A. Kanel-Belov, S. Malev, L. Rowen,
{\it The images of non-commutative polynomials evaluated on $3\times 3$ matrices},
Proc. Amer. Math. Soc. {\bf 144} (2016), 7-–19.

\bibitem[KBMR3]{KBMR3}
A. Kanel-Belov, S. Malev, L. Rowen,
{\it Power-central polynomials on matrices},
J. Pure Appl. Algebra  {\bf 220} (2016), 2164--2176.

\bibitem[KBMR4]{KBMR4}
A.~Kanel-Belov, S.~Malev, L.~Rowen,
{\it The images of Lie polynomials evaluated on $2\times 2$ matrices over an algebraically closed field},
Comm. Algebra {\bf 45} (2017), 4801--4808.

\bibitem[KaSc1]{KaSc1}
I. Kapovich, P. Schupp,
{\it On group-theoretic models of randomness and genericity},
Groups Geom. Dyn. {\bf 2} (2008), 383--404.

\bibitem[KaSc2]{KaSc2}
I. Kapovich, P. Schupp,
{\it Random quotients of the modular group are rigid and essentially incompressible},
J. reine angew. Math. {\bf 628} (2009), 91--119.

\bibitem[KN]{KN}
M. Kassabov, N. Nikolov,
{\it Words with few values in finite simple groups},
Quart. J. Math. {\bf 64} (2013), 1161--1166.

\bibitem[KKMP]{KKMP}
E. Klimenko, B. Kunyavskii, J. Morita, E. Plotkin,
{\em Word maps in Kac--Moody setting},
Toyama Math. J. {\bf 37} (2015), 25--53.

\bibitem[KT]{KT}
A. Klyachko, A. Thom,
{\it New topological methods to solve equations over groups},
Algebr. Geom. Topol. {\bf 17} (2017), 331--353.

\bibitem[KuSi]{KuSi}
A. Kulshrestha, A. Singh,
{\it Computing $n^{th}$ roots in $SL_2(k)$ and Fibonacci polynomials},
\url{arXiv:1710.03432}.

\bibitem[Ku1]{Ku1}
B. Kunyavski\u\i ,
{\it Local-global invariants of finite and infinite groups: around Burnside from another side},
Expo. Math. {\bf 31} (2013), 256--273.

\bibitem[Ku2]{Ku2}
B. Kunyavski\u\i ,
{\it Equations in matrix groups and algebras over number fields and rings: prolegomena to a lowbrow noncommutative Diophantine geometry},
in: ``Arithmetic and Geometry'', L.~V.~ Dieulefait {\it et al.} (Eds.),
LMS Lecture Notes, vol.~420, Cambridge Univ. Press, 2015, pp.~264--282.

\bibitem[LaTe]{LaTe}
F. Lalonde, A. Teleman,
{\it The $g$-areas and the commutator length},
Internat. J. Math. {\bf 24} (2013), no.~7, 1350057, 13~pp.

\bibitem[La]{La}
M. Larsen,
{\it Word maps have large image},
Israel J. Math. {\bf 139} (2004), 149--156.

\bibitem[LP]{LP}
M. Larsen, R. Pink,
{\it Finite subgroups of algebraic groups},
J. Amer. Math. Soc. {\bf 24} (2011), 1105--1158.


\bibitem[LS1]{LS1}
M. Larsen, A. Shalev,
{\it Word maps and Waring type problems},
J. Amer. Math. Soc. {\bf 22} (2009), 437--466.

\bibitem[LS2]{LS2}
M. Larsen, A. Shalev,
{\it Fibers of word maps and some applications},
J. Algebra {\bf 354} (2012), 36--48.

\bibitem[LS3]{LS3}
M. Larsen, A. Shalev,
{\it On the distribution of values of certain word maps},
Trans. Amer. Math. Soc. {\bf 368} (2016), 1647--1661.

\bibitem[LS4]{LS4}
M. Larsen, A. Shalev,
{\it Words, Hausdorff dimension and randomly free groups},
\url{arXiv:1706.08226}, to appear in Math. Ann.

\bibitem[LST1]{LST1}
M. Larsen, A. Shalev, P. H. Tiep,
{\it The Waring problem for finite simple groups},
Ann. Math. {\bf 174} (2011), 1885--1950.

\bibitem[LST2]{LST2}
M. Larsen, A. Shalev, P. H. Tiep,
{\it  Waring problem for finite quasisimple groups},
Int. Math. Res. Not. IMRN {\bf 2013}, no.~10, 2323--2348.

\bibitem[LaTi]{LaTi}
M. Larsen, P. H. Tiep,
{\it A refined Waring problem for finite simple groups},
Forum Math. Sigma {\bf 3} (2015), e6, 22 pp.

\bibitem[Lev]{Lev}
A. Lev,
{\it Products of cyclic conjugacy classes in the groups $PSL(n, F)$},
Linear Algebra Appl. {\bf 179} (1993), 59--83.

\bibitem[Levy1]{Levy1}
M. Levy,
{\it Word maps with small image in simple groups},
\url{arXiv:1206.1206}.


\bibitem[Levy2]{Levy2}
M. Levy,
{\it Images of word maps in almost simple groups and quasisimple groups},
Internat. J. Algebra Comput. {\bf 24} (2014), 47--58.

\bibitem[LOST1]{LOST1}
M. W. Liebeck, E. A. O${}^{\prime}$Brien, A. Shalev, P. H. Tiep,
{\it The Ore conjecture},
J. Europ. Math. Soc. {\bf 12} (2010), 939--1008.

\bibitem[LOST2]{LOST2}
M. W. Liebeck, E. A. O${}^{\prime}$Brien, A. Shalev, P. H. Tiep,
{\it Commutators in finite quasisimple groups},
Bull. Lond. Math. Soc. {\bf 43} (2011), 1079--1092.

\bibitem[LW]{LW}
Y. Liu, M. M. Wood,
{\it The free group on $n$ generators modulo $n+u$ random relations as $n$ goes to infinity},
\url{arXiv:1708.08509}.

\bibitem[Lo]{Lo}
A. Lonjou,
{\em Non simplicit\'e du groupe de Cremona sur tout corps},
Ann. Inst. Fourier {\bf 66} (2016), 2021--2046.



\bibitem[Lu]{Lu}
A. Lubotzky,
{\it Images of word maps in finite simple groups},
Glasgow Math. J. {\bf 56} (2014), 465--469.

\bibitem[LM]{LM}
A. Lubotzky, A. R. Magid,
{\it Varieties of Representations of Finitely Generated Groups},
Mem. Amer. Math. Soc. {\bf 58} (1985), no.~336.

\bibitem[MaOl]{MaOl}
A. Ma, J. Oliva,
{\it On the images of Jordan polynomials evaluated over symmetric matrices},
Linear Algebra Appl. {\bf 492} (2016), 13--25.

\bibitem[Mag]{Mag}
W. Magnus,
{\it \"Uber den Beweis des Hauptidealsatzes},
J. reine angew. Math. {\bf 170} (1934), 235--240.

\bibitem[Male]{Male}
S. Malev,
{\it The images of non-commutative polynomials evaluated on $2\times 2$ matrices over an arbitrary field},
J. Algebra Appl. {\bf 13} (2014), 1450004, 12~pp.

\bibitem[MaNa]{MaNa}
J. Malkoun, N. Nahlus,
{\it Commutators and Cartan subalgebras in Lie algebras of compact semisimple Lie groups},
J. Lie Theory {\bf 27} (2017), 1027--1032.

\bibitem[Mall]{Mall}
G. Malle,
{\it The proof of Ore's conjecture [after Ellers--Gordeev and Liebeck--O${}^{\prime}$Brien--Shalev--Tiep]},
Ast\'erisque {\bf 361} (2014), exp. no.~1069, 325--348.

\bibitem[MO1]{MO1}
{\it MathOverflow discussion},
\url{http://mathoverflow.net/questions/120818}.

\bibitem[MO2]{MO2}
{\it MathOverflow discussion},
\url{http://mathoverflow.net/questions/137753}.

\bibitem[MO3]{MO3}
{\it MathOverflow discussion},
\url{http://mathoverflow.net/questions/210728}.

\bibitem[McC]{McC}
M. McCrudden,
{\it On $n$th roots and infinitely divisible elements in a connected Lie group},
Math. Proc. Cambridge Philos. Soc. {\bf 89} (1981), 293--299.

\bibitem[McN]{McN}
G. J. McNinch,
{\it Sub-principal homomorphisms in positive characteristic},
Math. Z. {\bf 244} (2003), 433--455.

\bibitem[Mo]{Mo}
G. D. Mostow,
{\it Fully reducible subgroups of algebraic groups},
Amer. J. Math. {\bf 78} (1956), 200--221.


\bibitem[Mu]{Mu}
A. Muranov,
{\it Finitely generated infinite simple groups of infinite commutator width},
Internat. J. Algebra Comput. {\bf 17} (2007), 607--659.



\bibitem[MyNi]{MyNi}
A. Myasnikov, A. Nikolaev,
{\it Verbal subgroups of hyperbolic groups have infinite width},
J. London Math. Soc. {\bf 90} (2014), 573--591.

\bibitem[Na]{Na}
R. K. Nath,
{\it A new class of almost measure preserving maps on finite simple groups},
J. Algebra Appl. {\bf 13} (2014), no. 4, 1350142, 5~pp.


\bibitem[Ne]{Ne}
B. H. Neumann,
{\it Adjunction of elements to groups},
J. London Math. Soc. {\bf 18} (1943), 4--11.

\bibitem[Ng]{Ng}
P. W. Ng,
{\it Commutators in the Jiang--Su algebra},
Internat. J. Math. {\bf 23} (2012), no.~11, 1250113, 29~pp.

\bibitem[NP]{NP}
N. Nikolov, L. Pyber,
{\it Product decompositions of quasirandom groups and a Jordan
type theorem},
J. Eur. Math. Soc. {\bf 13} (2011), 1063--1077.

\bibitem[Oll]{Oll}
Y. Ollivier,
{\it A January 2005 Invitation to Random Groups},
Ensaios Matem\'aticos, vol.~10, Soc. Brasileira Mat., Rio de Janeiro, 2005.

\bibitem[Ols]{Ols}
A. Yu. Ol'shanski\u\i ,
{\it Almost every group is hyperbolic},
Internat. J. Algebra Comput. {\bf 2} (1992), 1--17.

\bibitem[PS]{PS}
O. Parzanchevski, G. Schul,
{\it On the Fourier expansion of word maps},
Bull. Lond. Math. Soc. {\bf 46} (2014), 91--102.

\bibitem[PW]{PW}
S. Pasiencier, H.-C. Wang,
{\it Commutators in a semi-simple Lie group},
Proc. Amer. Math. Soc. {\bf 13} (1962), 907--913.

\bibitem[PR]{PR}
V. P. Platonov, A. S. Rapinchuk,
{\it Algebraic Groups and Number Theory},
Nauka, Moscow, 1991; English transl. Academic Press, 1993.

\bibitem[Pl]{Pl}
T. Plotnikov,
{\it On semi-rational groups},
\url{ arXiv:1803.07120}.

\bibitem[Pr]{Pr}
G. Prasad,
{\it Elementary proof of a theorem of Bruhat--Tits--Rousseau and of a theorem of Tits},
Bull. Soc. Math. France {\bf 110} (1982), 197--202.

\bibitem[PST]{PST}
R. Proud, J. Saxl, D. Testerman,
{\it Subgroups of type $A_1$ containing a fixed unipotent element in an algebraic group},
J. Algebra {\bf 231} (2000), 53--66.



\bibitem[Ree]{Ree}
R. Ree,
{\it Commutators in semi-simple algebraic groups},
Proc. Amer. Math. Soc. {\bf 15} (1964), 457--460.

\bibitem[Ri]{Ri}
C. Riehm, 
{\it The norm $1$ group of a ${\mathfrak{P}}$-adic division algebra}, 
Amer. J. Math. {\bf 92} (1970), 499--523. 

\bibitem[Ro]{Ro}
V. Roman'kov,
{\it Equations over groups},
Groups Complex. Cryptol. {\bf 4} (2012), 191--239.





\bibitem[ST]{ST}
J. Schneider, A. Thom,
{\it Word images in symmetric and unitary groups are dense},
\url{arXiv:1802.09289}.



\bibitem[Seg]{Seg}
D. Segal,
{\it Words: Notes on Verbal Width in Groups},
London Math. Soc. Lecture Note Ser., vol.~361,
Cambridge Univ. Press, Cambridge, 2009.

\bibitem[Ser1]{Ser1}
J-P. Serre,
{\em A Minkowski-style bound for the orders of the finite subgroups of the Cremona group of rank 2 over an arbitrary field},
Moscow Math. J. {\bf 9} (2009), 183--198.

\bibitem[Ser2]{Ser2}
J-P. Serre,
{\em Le groupe de Cremona et ses sous-groupes finis},
Ast\'erisque {\bf 332} (2010), exp.~no.~1000, 75--100.

\bibitem[Sh1]{Sh1}
A. Shalev,
{\it Commutators, words, conjugacy classes and character methods},
Turkish J. Math. {\bf 31} (2007), 131--148.

\bibitem[Sh2]{Sh2}
A. Shalev,
{\it Applications of some zeta functions in group theory},
in: ``Zeta Functions in Algebra and Geometry'' (A.~Campillo {\it et al.}, Eds.),
Contemp. Math., vol.~566, Amer. Math. Soc., Providence, RI, 2012, pp.~331--344.

\bibitem[Sh3]{Sh3}
A. Shalev,
{\it Some results and problems in the theory of word maps},
in: ``Erd\"os Centennial'' (L.~Lov\'asz, I.~Ruzsa, V.~T.~S\'os, D.~Palvolgyi, Eds.),
Bolyai Soc. Math. Studies, vol.~25, Springer, 2013.

\bibitem[Sl]{Sl}
M. Slusky,
{\it Zeros of $2\times 2$ matrix polynomials},
Comm. Algebra {\bf 38} (2010), 4212--4223.

\bibitem[SS]{SS}
T. A. Springer, R. Steinberg,
{\it  Conjugacy classes},
in: ``Seminar on Algebraic Groups and Related Finite Groups'',
Lecture Notes Math., vol.~{\bf 131}, Springer-Verlag, Berlin--Heidelberg--New York, 1970, pp.~167--266.

\bibitem[Stein]{Stein}
A. Stein,
{\em $1 \frac{1}{2}$-generation of finite simple groups},
Beitr\"age Algebra Geom. {\bf 39} (1998), 349--358.

\bibitem[St1]{St1}
R. Steinberg, {\it Regular elements of semisimple algebraic groups},
Inst. Hautes \'Etudes Sci. Publ. Math. {\bf 25} (1965), 49--80.

\bibitem[St2]{St2}
R. Steinberg,
{\it Lectures on Chevalley groups},
Lectures Notes, Yale University, 1967--1968,
Univ. Lecture Series, vol.~66, Amer. Math. Soc., Providence, RI, 2016.


\bibitem[St3]{St3}
R. Steinberg,
{\it Conjugacy Classes in Algebraic Groups},
Lecture Notes Math., vol.~366, Springer-Verlag, Berlin-New York, 1974.

\bibitem[St4]{St4}
R. Steinberg,
{\it On power maps in algebraic groups},
Math. Res. Lett. {\bf 10} (2003), 621--624.

\bibitem[Sp]{Sp}
\v{S}. \v{S}penko,
{\it On the image of a noncommutative polynomial},
J. Algebra {\bf 377} (2013), 298--311.

\bibitem[Te]{Te}
D. Testerman,
{\it $A_1$-type overgroups of elements of order p in semisimple algebraic groups and the associated finite groups},
J. Algebra {\bf 177} (1995), 34--76.




\bibitem[Th]{Th}
A. Thom,
{\it Convergent sequences in discrete groups},
Canad. Math. Bull. {\bf 56} (2013), 424--433.

\bibitem[Ts1]{Ts1}
T. Tsuboi,
{\it On the uniform perfectness of the groups of diffeomorphisms of even-dimensional manifolds},
Comment. Math. Helv. {\bf 87} (2012), 141--185.

\bibitem[Ts2]{Ts2}
T. Tsuboi,
{\it Homeomorphism groups of commutator width one},
Proc. Amer. Math. Soc. {\bf 141} (2013), 1839--1847.

\bibitem[VW]{VW}
L. Vaserstein, E. Wheland, 
{\it Products of conjugacy classes of two by two matrices}, 
Linear Algebra Appl. {\bf 230} (1995), 165--188. 

\bibitem[VO]{VO}
E. B. Vinberg, A. L. Onishchik,
{\it Seminar on Lie Groups and Algebraic Groups}, 2nd ed., Moscow, 1995;
English transl. of the 1st ed. ``Lie Groups and Algebraic Groups'', Springer-Verlag, Berlin--Heidelberg, 1990.

\bibitem[We]{We}
B. A. F. Wehrfritz,
{\it A residual property of free metabelian groups},
Arch. Math. {\bf 20} (1969), 248--250.

\bibitem[Wo]{Wo}
J. W. Wood,
{\it Bundles with totally disconnected structure group},
Comment. Math. Helv. {\bf 46} (1971), 257--273.

\bibitem[Wu1]{Wu1}
M. W\"ustner,
{\it Historical remarks on the surjectivity of the exponential function of Lie groups},
Historia Math. {\bf 29} (2002), 266--272.

\bibitem[Wu2]{Wu2}
M. W\"ustner,
{\it The classification of all simple Lie groups with surjective exponential map},
J. Lie Theory {\bf 15} (2005), 269--278.

\bibitem[Zi]{Zi}
S. Zimmermann,
{\em The Abelianisation of the real Cremona group},
Duke Math. J. {\bf 167} (2018), 211--267.

\end{thebibliography}
\end{document}